\renewcommand*\env@matrix[1][\arraystretch]{%
  \edef\arraystretch{#1}%
  \hskip -\arraycolsep
  \let\@ifnextchar\new@ifnextchar
  \array{*\c@MaxMatrixCols c}}
\newcommand{\vertiii}[1]{{\left\vert\kern-0.25ex\left\vert\kern-0.25ex\left\vert #1 
    \right\vert\kern-0.25ex\right\vert\kern-0.25ex\right\vert}}
\DeclareMathOperator{\sech}{sech}
\DeclareMathOperator{\sign}{sign}
\DeclareMathOperator{\Sec}{Sec}
\let \sectionsymbol \S
\newcommand{\de}{\delta}
\newcommand{\ga}{\gamma}
\newcommand{\R}{\mathbb{R}}
\renewcommand{\S}{\mathbb{S}}
\newcommand{\tM}{\widetilde{M}}
\newcommand{\p}{\partial}
\newcommand{\oM}{\overline{M}}
\newcommand{\rg}{\rangle}
\renewcommand{\lg}{\langle}
\newcommand{\prl}{\parallel}
\newcommand{\fe}{\varphi}
\newcommand{\ka}{\kappa}
\newcommand{\cA}{\mathcal{A}}
\newcommand{\cF}{\mathcal{F}}
\newcommand{\cI}{\mathcal{I}}
\newcommand{\cK}{\mathcal{K}}
\newcommand{\cR}{\mathcal{R}}
\newcommand{\cU}{\mathcal{U}}
\newcommand{\cV}{\mathcal{V}}
\newcommand{\cW}{\mathcal{W}}
\newcommand{\cX}{\mathcal{X}}
\newcommand{\cY}{\mathcal{Y}}
\renewcommand{\a}{\alpha}
\renewcommand{\b}{\beta}
\newcommand{\g}{\gamma}
\renewcommand{\d}{\delta}
\let\epsilon\varepsilon
\newcommand{\e}{\epsilon}
\newcommand{\G}{\Gamma}
\renewcommand{\r}{\rho}
\newcommand{\rb}{\overline{\rho}}
\renewcommand{\l}{\lambda}
\renewcommand{\th}{\theta}
\newcommand{\w}{\omega}
\theoremstyle{plain}
\newtheorem{theorem}{Theorem}
\newtheorem{proposition}{Proposition}
\numberwithin{proposition}{section}
\newtheorem{lemma}[proposition]{Lemma}
\theoremstyle{definition}
\theoremstyle{remark}
\numberwithin{equation}{section}
\let \o \undefined
\def \o#1{\overline{#1}}
\newcommand{\dr}{\r'}
\newcommand{\A}{\mathcal{A}}
\renewcommand{\e}{\epsilon}
\begin{document}

\title[Boundary Conjugate Points 
  but no Interior Conjugate Points]{Asymptotically Hyperbolic
  Manifolds with Boundary Conjugate Points  
  but no Interior Conjugate Points}
\author[Nikolas Eptaminitakis and C. Robin Graham]{Nikolas Eptaminitakis}
\address{Department of Mathematics, University of Washington,
Box 354350\\
Seattle, WA 98195-4350, USA}
\email{neptamin@uw.edu}

\author[]{C. Robin Graham}
\email{robin@math.washington.edu}

\subjclass[2010]{53C20, 53C22}

\keywords{asymptotically hyperbolic manifold, boundary conjugate points, 
  interior conjugate points}  

\begin{abstract}
We construct non-trapping asymptotically hyperbolic manifolds with boundary
conjugate points but no interior conjugate points.
\end{abstract}

\maketitle

\thispagestyle{empty}

\begin{center}
{\it Dedicated to the memory of Elias M. Stein}
\end{center}

\bigskip

\section{Introduction}

Let $\oM$ be a smooth compact manifold with boundary with interior $M$ and
let $r$ be a boundary defining function.  
A Riemannian metric $g$ on $M$ is called \textit{asymptotically hyperbolic
(AH)} if $\o{g}:=r^2 g$ extends smoothly to a Riemannian   
metric on $\oM$ and in addition $|dr|_{\o{g}}\equiv1$ on $\p M$.    
As shown in \cite{MR2941112}, AH metrics are complete, with sectional
curvatures approaching $-1$ as $r\to 0$. 
A manifold endowed with an AH metric will also be called asymptotically hyperbolic.
The most important example of an AH manifold is the Poincar{\'e} model of
hyperbolic space with constant sectional curvature $-1$; the underlying
manifold is $\mathbb{B}^{n+1}=\{x\in \R^{n+1}:|x|<1\}$, endowed with the
metric $\displaystyle g=\frac{4\sum_{j=0}^n(dx^j)^2}{(1-|x|^2)^2}$. 

An AH manifold $M$ is called non-trapping if given any compact set
$K\subset M$ and any unit speed geodesic $\g(t)$ there exists $T_{K,\g}$ 
so that $\g(t)\notin K$ for $|t|\geq T$.  It was proved in \cite{MR2941112}
that any geodesic of a non-trapping AH manifold $M$ approaches boundary
points $p^\pm\in \p M$ as $t\to\pm\infty$.  Recall that a Riemannian
manifold is said to have no conjugate points if any non-trivial Jacobi
field along a unit speed geodesic vanishes at most once.  On a non-trapping AH manifold
one can make sense of \textit{boundary conjugate points} along a unit speed
geodesic $\g$: two boundary points $p^+$, $p^-\in\p M$ are called conjugate
along $\g$ if $\lim_{t\to\pm\infty}\g(t)=p^\pm$ and there exists a
non-trivial Jacobi field $Y$ along $\g$ satisfying
$\lim_{t\to\pm\infty}|Y(t)|_g=0$.  We will often 
call the usual conjugate points \textit{interior}, to distinguish
them from boundary conjugate points.  On AH manifolds with non-positive
sectional curvature there are no interior or boundary conjugate points.  We
also mention that a result in \cite{MR0380891} implies that if an AH
manifold has no interior conjugate points then there is no Jacobi field
$Y(t)$ along a unit speed geodesic with the property
$|Y(0)|_g=0=\lim_{t\to\infty}|Y(t)|_g$, that is, no
``interior-boundary'' conjugate points can occur.
In this paper we prove the following:  
\begin{theorem}\label{010319maintheorem}
For any integer $n\geq 1$ there exist smooth non-trapping
asymptotically hyperbolic manifolds of dimension $n+1$ with
boundary conjugate points but no interior conjugate points.
\end{theorem}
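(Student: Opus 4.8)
The plan is to produce the examples explicitly, as rotationally symmetric metrics on the ball, reducing the conjugate-point questions to a Schr\"odinger eigenvalue problem on the line. Fix $n$, let $M=\mathbb{B}^{n+1}$ with center $o$, and take $g=d\r^2+f(\r)^2\,g_{\mathbb{S}^n}$, where $\r$ is the $g$-distance to $o$ and $f$ is smooth on $[0,\infty)$, odd, with $f(0)=0$, $f'(0)=1$ (so $g$ is smooth at $o$), with $f''=f$ for $\r$ large (so $g$ is AH, $\p M$ being a round sphere), and with $f>0$, $f'>0$ on $(0,\infty)$ (so $g$ is non-trapping: every geodesic not through $o$ has a unique point closest to $o$, along which $\r$ then increases monotonically to $\infty$ on either side). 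The radial and spherical sectional curvatures are $K_{\mathrm{rad}}(\r)=-f''(\r)/f(\r)$ and $K_{\mathrm{tan}}(\r)=(1-f'(\r)^2)/f(\r)^2$, both tending to $-1$. I would require in addition that $K_{\mathrm{rad}}$ be non-increasing with $K_{\mathrm{rad}}(0)>0$, and that $K_{\mathrm{tan}}\le K_{\mathrm{rad}}$ everywhere (an equality for $f=\sinh\r$, and stable under suitable perturbations).

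First I would analyze the diameter $\g$ through $o$, parametrized by arc length with $\g(0)=o$. Along it $\dot\g$ is radial, so every normal Jacobi field has the form $\sum_i\phi_iE_i$ with the $E_i$ parallel and each $\phi_i$ solving $\phi''+K_{\mathrm{rad}}(|t|)\phi=0$, that is $H\phi=0$ with $H:=-\p_t^2+V$ and $V(t):=f''(|t|)/f(|t|)\to1$. Since $V$ is even, the ground state $\Psi$ of $H$ is even, while the odd solution of $H\phi=0$ is $\sign(t)\,f(|t|)$, which grows like $e^{|t|}$; comparing asymptotics at $\pm\infty$ then shows that a nonzero Jacobi field decaying at both ends exists --- equivalently, the two endpoints of $\g$ on $\p M$ are conjugate --- exactly when $\Psi\in L^2$, i.e.\ when $\inf\operatorname{spec}(H)=0$. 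On the other hand, by Sturm comparison an interior conjugate pair on $\g$ would force the first Dirichlet eigenvalue of $H$ on some bounded interval to be $\le0$; but the ground state of $H$ is nowhere vanishing and not compactly supported, so that first Dirichlet eigenvalue strictly exceeds $\inf\operatorname{spec}(H)$ on every proper interval, whence $\inf\operatorname{spec}(H)\ge0$ already rules out interior conjugate points along $\g$. Thus the only condition to arrange for $\g$ is $\inf\operatorname{spec}(H)=0$, and I would get it by tuning: writing $K_{\mathrm{rad}}=-1+\l\b$ with $\b\ge0$ smooth, even (so $f$ is odd), non-increasing, compactly supported, $\b(0)>0$, one has $V=1-\l\b(|\cdot|)$ irrespective of $f$, and $\inf\operatorname{spec}(-\p_t^2+1-\l\b(|\cdot|))$ decreases continuously from $1$; for $\b$ chosen suitably wide and shallow it reaches $0$ at some $\l^*>1/\b(0)$ (so $K_{\mathrm{rad}}(0)>0$) for which the corresponding $f_{\l^*}$, solving $f''=(1-\l\b)f$ with $f(0)=0$, $f'(0)=1$, remains positive and strictly increasing.

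It then remains to exclude interior conjugate points along every other geodesic $\g_c$. Such a $\g_c$ has a Clairaut constant $c>0$, closest radius $\r_{\min}=f^{-1}(c)$, and escapes to $\p M$; since it lies in a totally geodesic $2$-ball through $o$, its normal Jacobi fields split into a planar part satisfying $\phi''+K_{\mathrm{rad}}(\r_c(t))\phi=0$, where $\r_c$ is the radial coordinate along $\g_c$ and $t$ the arc length from its closest point, and $n-1$ transverse parts satisfying $\phi''+\td{K}_c(t)\phi=0$ with $\td{K}_c=\cos^2\psi\,K_{\mathrm{rad}}(\r_c)+\sin^2\psi\,K_{\mathrm{tan}}(\r_c)$ and $\sin\psi=c/f(\r_c)$. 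The geometric heart of the matter is the inequality $\r_c(t)\ge|t|$: from $\r_c'=\sqrt{1-c^2/f(\r_c)^2}$ the function $\r_c(t)-|t|$ is non-increasing in $|t|$ with limit $\r_{\min}-\int_{\r_{\min}}^{\infty}\bigl(f(\r)/\sqrt{f(\r)^2-c^2}-1\bigr)\,d\r$, which has to be shown nonnegative for the $f$ just built. Granting it, the monotonicity of $K_{\mathrm{rad}}$ and the bound $K_{\mathrm{tan}}\le K_{\mathrm{rad}}$ give $\td{K}_c(t)\le K_{\mathrm{rad}}(\r_c(t))\le K_{\mathrm{rad}}(|t|)$, so the Schr\"odinger operators governing the Jacobi fields along $\g_c$ dominate $H$ as quadratic forms; their spectra are therefore $\ge\inf\operatorname{spec}(H)=0$, and the Sturm argument above shows $\g_c$ has no interior conjugate points. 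With the diameter this yields a smooth, non-trapping AH metric having no interior conjugate points but a pair of boundary conjugate points; for $n=1$ the argument is identical with the transverse part and the hypothesis $K_{\mathrm{tan}}\le K_{\mathrm{rad}}$ deleted.

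The hard part will be this last step. Because $\g$ sits exactly at the threshold $\inf\operatorname{spec}(H)=0$, every nearly-radial geodesic is nearly at threshold too, so the argument cannot avoid exploiting the definite sign in $\r_c(t)\ge|t|$ together with the monotonicity of $K_{\mathrm{rad}}$; proving $\r_c(t)\ge|t|$ for the constructed profile, and meeting the tuning equation and the monotonicity and sign conditions on $f$ simultaneously (a genuine compatibility question that may well require a multi-parameter family of profiles), is where the real work lies.
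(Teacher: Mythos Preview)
Your spectral-comparison strategy is genuinely different from the paper's and cleaner in outline. The paper builds an explicit $C^{1,1}$ threshold metric $g_{\pi/4,0}$ for which all Jacobi fields are computable in closed form, verifies the theorem there by hand, and then perturbs to a nearby smooth $g_{r,\e}$. For geodesics with closest distance $s$ bounded away from zero a compactness argument against the $C^{1,1}$ model suffices; for small $s$ the paper proves a criterion in terms of the decaying solution $\cW$ of the scalar Jacobi equation (the even equation $\cY''+K\cY=0$ is disconjugate iff $\cW'(0)\le0$), and then computes $\p_s^2(\cW_{s}'(0))|_{s=0}<0$, so that tuning the radial geodesic to threshold forces $\cW_s'(0)\le0$ for all small $s$. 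No pointwise curvature comparison between distinct geodesics enters.

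The gap you flag at the end is real and, I believe, more than a technicality. Your treatment of the non-radial geodesics rests entirely on the pointwise inequality $\r_c(t)\ge |t|$, and this is genuinely borderline at threshold: for the paper's $C^{1,1}$ metric one computes that $L(s):=\lim_{t\to\infty}(\r_s(t)-t)$ is nonnegative but vanishes to \emph{fourth} order at $s=0$ (explicitly $L(s)=s^4/8+O(s^5)$). So the inequality holds there with no margin whatsoever near the radial geodesic, and there is no evident mechanism forcing this fourth-order positivity to survive smoothing while $\inf\operatorname{spec}(H)=0$ is simultaneously held. Establishing compatibility would require controlling $s$-derivatives of $\r_s$ and of the profile through order four under perturbation --- work of the same nature and delicacy as the paper's stable-solution analysis, which avoids pointwise curvature comparison precisely because that route is so fragile. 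Your sketch gives no indication how this would be done, and for a generic one-parameter family of profiles the compatibility may simply fail.
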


Our interest in this question arose in connection with the 
formulation of the definition of a \textit{simple} AH manifold in 
\cite{2017arXiv170905053G}.  Recall 
that one of the equivalent definitions of a simple compact Riemannian
manifold with boundary is that the boundary be strictly convex, the
manifold be non-trapping, and no pair of points  
(in the interior or on the boundary) be conjugate along any geodesic.  
Simple compact manifolds with boundary are the most basic natural setting
for the study of geometric inverse problems.  The paper 
\cite{2017arXiv170905053G} was concerned with extending this study to the 
AH setting, which necessitated among other things formulating an analogous
definition of a simple AH manifold.  In the AH case, convexity  
of the boundary is in a sense automatic:  for any boundary defining
function $r$ and $\e>0$ small enough the sets  
$r\geq \e$ are strictly convex.  The definition in
\cite{2017arXiv170905053G} of a simple AH manifold is that the AH manifold
be non-trapping and without boundary conjugate points.  It was shown that 
these conditions imply that the geodesic flow is Anosov with respect to the
Sasaki metric, which together with the main result of \cite{MR3825848}
implies that there are no interior conjugate points either.  The question
thus arose of whether it is equivalent to assume the manifold is AH,    
non-trapping and without interior conjugate points.  
Theorem~\ref{010319maintheorem} resolves this in the negative.

In the 1970s there was a great deal of interest and activity concerned 
with understanding the relationships between various properties on a
Riemannian manifold such as absence of conjugate points, Anosov geodesic
flow, absence or presence of focal points, and existence of open sets of
strictly positive curvature.  Our construction is inspired by techniques
used in \cite{MR0383294} to construct metrics elucidating the relationships
between some of these properties.  Such questions remain of current
interest; see, for example, \sectionsymbol 2.3 of \cite{guillarmou2019xray}
where methods of \cite{MR0383294} are used to construct an  
asymptotically conic metric on $\R^n$ which has positive curvature on an
open set but no conjugate points.
We start by constructing a non-trapping, complete,
$O(n+1)$-invariant $C^{1,1}$ metric on $\R^{n+1}$ which compactifies to an
AH metric, such that there are no
nontrivial Jacobi fields that vanish twice in the interior but along
radial geodesics there are Jacobi fields that vanish as both   
$t\to \pm\infty$.  Here the $C^{1,1}$ 
regularity implies existence and uniqueness of geodesics; Jacobi fields are
understood in a weak sense.  Our manifold has constant positive sectional
curvature in an open geodesic ball and negative sectional curvature outside
a compact set; when $n=1$, the negative sectional curvature is
constant whereas when $n\geq 2$ this is not the case.  For our purposes,
the size of the set of positive curvature has to be  
carefully chosen: if it is too large, interior conjugate points occur,
whereas if it is too small no boundary conjugate points occur; there is a
critical size for which there exist boundary conjugate points but no
interior ones.  Because of this, our analysis is much
more delicate than that of \cite{MR0383294}, where the conditions are 
open.  Somewhat 
surprisingly, it turns out that for our $C^{1,1}$ metric one can compute
exact formulas for all geodesics, sectional curvatures and Jacobi fields
even though the manifold has non-constant curvature outside any compact set
for $n\geq 2$.  For this reason our $C^{1,1}$ metric may be of more general
interest.     

In the second half of the paper we show that our metric can be approximated
by smooth metrics that still have all the required properties.  As already
hinted, 
these properties are quite unstable under perturbations of the metric:
small variations can result in either presence of interior conjugate
points or absence of boundary ones.  The analogous approximation step in 
\cite{MR0383294} was trivial; any smooth, or even real-analytic, metric 
sufficiently close continued to satisfy the requisite conditions.  We
analyze the \textit{stable} Jacobi fields, defined as those which vanish as
$t\to \infty$.  By careful choice of parameters in our approximating metric
we arrange that there is a stable Jacobi field along radial geodesics 
which also vanishes as $t\to -\infty$ so that 
the corresponding metric has boundary conjugate points.  We then derive a
criterion (Proposition \ref{123018proposition}) in terms of the behavior of
the stable solution for certain second order ODE's that rules out solutions
vanishing twice.  The relevant behavior can be controlled under
perturbations of the metric to rule out interior conjugate points.  Our
argument requires control over third 
derivatives of the stable solutions (two in a parameter and one in the  
time variable) as the approximating metric approaches the $C^{1,1}$ metric,  
for which we have to carry out some rather technical analysis.

The paper is organized as follows. The $C^{1,1}$ metric is constructed in
Section \ref{c1manifoldsection}: in \ref{sec:the_metric} we
define it and state some general properties, in 
\ref{013019curvaturegeodesics} we show explicit formulas for the curvature
along geodesics and in \ref{121118section} we compute formulas for
Jacobi fields and show Theorem \ref{010319maintheorem} in the $C^{1,1}$
case.  In Section \ref{smoothingsection} we prove Theorem  
\ref{010319maintheorem} in the $C^\infty$ case.  We first reduce 
Theorem \ref{010319maintheorem} to three propositions (\ref{exist}, 
\ref{noicpssmall}, \ref{noicpslarge}) 
concerning stable Jacobi fields and absence of conjugate points for the
approximating metrics.  Then we 
carry out the analysis of the derivatives of the stable solutions, prove 
Proposition \ref{123018proposition} which rules out interior conjugate
points, and conclude by proving Propositions~\ref{exist},
\ref{noicpssmall}, \ref{noicpslarge}.   

\medskip
\noindent \textbf{Acknowledgments.}
Research of N.E. was partially supported by the National Science Foundation 
under Grants No. DMS-1800453 and DMS-1265958 of Gunther Uhlmann. It was 
also partially supported by the National Science Foundation under Grant
No. DMS-1440140 while N.E. was in residence at the Mathematical Sciences
Research Institute in Berkeley, California, during the Fall 2019 semester. 

\smallskip

\section{The \texorpdfstring{$C^{1,1}$}{C1,1}
  Metric}\label{c1manifoldsection} 

\subsection{The Metric}\label{sec:the_metric}

We will construct metrics on 
$\R^{n+1}\backslash \{0\}\simeq(0,\infty)_\r\times \S^n$  
of the form 
\begin{equation}\label{eq1} 
g=d\r^2+\A^2(\r) \mathring{g}
\end{equation} 
in polar coordinates that extend smoothly to the origin.
Here $\mathring{g}$ denotes the round metric on $\S^n$ and $\A(\r)$ is a
positive function on $(0,\infty)$ to be chosen appropriately, with
$\cA(\r)=\sin(\r)$ for $\r$ small.  Hence in a neighborhood of the 
origin $g$ is smooth and is isometric to the round metric on 
$\S^{n+1}$.  
Relative to the product decomposition
$\R^{n+1}\backslash \{0\}\simeq(0,\infty)\times \S^n$, the non-zero
Christoffel symbols of $g$ are 
\begin{equation}\label{091218:Christoffel}
	\begin{tabular}{c c c}
		$\G_{\a\b}^0=-\cA(\r)\cA'(\r)\mathring{g}_{\a\b}$, &
          $\G_{\a0}^\g=\cA^{-1}(\r)\cA'(\r)\d_\a^\g$, &
          $\G_{\a\b}^\g=\mathring{\G}_{\a\b}^\g$,  
	\end{tabular}
\end{equation}
where $\mathring{\G}$ are the Christoffel symbols of the round metric and
$\r$ is the 0-th coordinate.\footnote{Throughout this paper, Greek indices run 
  from $1$ to $n$ and Latin indices run from 0 to $n$.} 
The form of the Christoffel symbols implies that for any  
$k=1,\dots,n+1$, $k$-dimensional Euclidean planes passing through the
origin are totally geodesic.  To see this, note that the intersection of
$\S^n\subset \R^{n+1}$ with any $k$-dimensional plane through the origin is
totally geodesic for the round metric, and that in general an embedded
submanifold $M^k\subset \tM^d$ is totally geodesic if and only if in any
coordinate chart $(U,\phi)$ for which $\phi(U\cap M)\subset\{(z,z')\in
\R^{k}\times\R^{d-k}: z'=0\}$, the Christoffel symbols  
satisfy $\G_{ij}^m=0$ on $M\cap U$ for $i,j\leq k$ and all $m\geq k+1$.  As
a special case, lines of the form $\g(t)=t v$ for $v\in \R^{n+1}$ with
Euclidean length 1 are totally geodesic, and in fact they are radial unit
speed geodesics for $g$. 

The curvature tensor of warped product metrics like $g$ can be described
as follows.  This is a special case of Proposition 42, Chapter 7 in
\cite{MR719023}.    
\begin{proposition}\label{042919prop}
	Let $g=d\r^2 +\cA^2(\r)b$, where $\rho\in \R$, $0<\cA\in
        C^\infty(\R)$, and $b$ is 
        a metric on a manifold $B$.  If $R$, $R_b$ denote the Riemannian
        curvature tensors of $g$, $b$, respectively, and $U$, $V$, $W\in  
        \mathfrak{X}(B)$, then
	\begin{enumerate}[(1)]
		\item $R(\p_\r,V)\p_\r=-\cA^{-1}(\r)\cA''(\r) V$
		\item \label{042919item} $R(V,W)\p_\r=0$
		\item $R(\p_\r,V)W=\lg V,W\rg_g \cA^{-1}(\r)\cA''(\r)\p_\r$
		\item $R(V,W)U= R_b(V,W)U-(\cA'(\r))^2\cA^{-2}(\r)(\lg
                  V,U\rg_g W-\lg W,U\rg_g V)$. 
	\end{enumerate}
\end{proposition}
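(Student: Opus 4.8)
The plan is to carry out directly the computation underlying Proposition~42 of Chapter~7 of \cite{MR719023}, in the present notation and sign conventions. Set $h:=\cA'/\cA$, so that $h'+h^2=\cA^{-1}\cA''$. First I would record the Levi-Civita connection of $g$: by the Koszul formula (the same computation that yields \eqref{091218:Christoffel}, now with $\mathring{g}$ replaced by $b$), for $V,W$ the lifts to $\R\times B$ of vector fields on $B$ one has
\begin{equation*}
\nabla_{\p_\r}\p_\r=0,\qquad \nabla_{\p_\r}V=\nabla_V\p_\r=h\,V,\qquad \nabla_VW=\nabla^b_VW-h\lg V,W\rg_g\,\p_\r,
\end{equation*}
where $\nabla^b$ denotes the lift of the Levi-Civita connection of $b$ and we used $\lg V,W\rg_g=\cA^2\lg V,W\rg_b$. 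I would also note that $[\p_\r,V]=0$ and that $[V,W]$ is the lift of the corresponding bracket on $B$; since $R$ is tensorial, it then suffices to verify each identity when the arguments are $\p_\r$ or such lifted fields.

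For items (1)--(3) I would substitute these formulas into the curvature operator, using the convention of \cite{MR719023}, namely $R(X,Y)Z=\nabla_{[X,Y]}Z-\nabla_X\nabla_YZ+\nabla_Y\nabla_XZ$. Since $\nabla^b_VW$ and $[V,W]$ are again lifts, $\nabla_{\p_\r}$ acts on them as multiplication by $h$, and one gets $R(\p_\r,V)\p_\r=-\nabla_{\p_\r}(hV)=-(h'+h^2)V=-\cA^{-1}\cA''V$, while a short computation in which the $\p_\r$-components cancel by the symmetry of $g$ gives $R(V,W)\p_\r=0$. For (3) one can either expand $R(\p_\r,V)W$ in the same way---the $\nabla^b_VW$ contributions cancel and the remaining $\p_\r$-component simplifies to $\cA^{-1}\cA''\lg V,W\rg_g\p_\r$---or deduce it from (1) and (2) via the symmetries of the curvature tensor.

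The one substantive item is (4). Expanding $R(V,W)U$ with the connection formulas above, the part tangent to $B$ is $R_b(V,W)U+h^2(\lg W,U\rg_gV-\lg V,U\rg_gW)$, and the $\p_\r$-component is a multiple of
\begin{equation*}
\lg V,\nabla^b_WU\rg_g-\lg W,\nabla^b_VU\rg_g+V\lg W,U\rg_g-W\lg V,U\rg_g-\lg[V,W],U\rg_g,
\end{equation*}
which vanishes because $\nabla^b$ is torsion-free and compatible with $b$, hence with $\cA^2b$ as well since $\cA$ is constant along $B$. As $h^2=(\cA')^2\cA^{-2}$, this is the asserted formula. The main ``obstacle'' here is entirely bookkeeping: keeping straight which quantities depend on $\r$ (so that $\p_\r$ differentiates them: $h$, $\cA$, $\lg\cdot,\cdot\rg_g$) versus which are pulled back from $B$ (so that $\nabla_{\p_\r}$ merely multiplies by $h$), together with fixing the curvature sign convention to agree with \cite{MR719023}. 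There are no analytic subtleties; $\cA\in C^\infty$ is used only to differentiate it twice.
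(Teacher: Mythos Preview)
Your proposal is correct. The paper does not supply its own proof of this proposition; it simply cites Proposition~42, Chapter~7 of \cite{MR719023}, and what you have written is precisely a clean execution of that computation in the present notation and sign convention, so your approach coincides with the paper's.
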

For an $O(n+1)$-invariant metric on $\R^{n+1}$, the sectional 
curvature of a 2-plane $\Pi\subset T_p\R^{n+1}$ at a point $p=\r\w$,
$\r>0$, $\w\in \S^n$, depends only on $\r$ and the angle $\a$ between $\p_\r$
and $\Pi$.  We will denote any such plane by $\Pi_{\r;\cos(\a)}$ and the 
corresponding sectional curvature by $\Sec(\Pi_{\r;\cos(\a)})$.\footnote{We
  use $\Sec$ for sectional curvature, as opposed to $\sec$ which will be
  used for the secant of a real number.}     
Then by Proposition \ref{042919prop} we find, for 2-planes parallel to the radial direction,
\begin{equation}
	\Sec(\Pi_{\r;1})=-\cA(\r)^{-1}\cA''(\r)=:K^\prl(\r).\label{122618curvature}
\end{equation}
Moreover, if $n\geq 2$, for 2-planes normal to the radial direction we have
\begin{equation}
	\Sec(\Pi_{\r;0})=\cA^{-2}(\r)-\cA^{-2}(\r)(\cA'(\r))^2=:K^\perp(\r).\label{eq:Kperp}
\end{equation}
More generally, it follows from (\ref{042919item}) in Proposition
\ref{042919prop} and the symmetries of the curvature tensor that
$R(u,w,u,\p_\r)=0$ for $u$, $w\in \p_\r^\perp$, so for $\a\in[0,\pi/2]$ we
have 
\begin{equation}\label{121818curvature}
	\Sec(\Pi_{\r;\cos(\a)})=\cos^2(\a)K^\parallel(\r)+\sin^2(\a)K^\perp(\r).
\end{equation}
It will later be convenient to use \eqref{121818curvature} to {\it define} 
$\Sec(\Pi_{\r;\cos(\a)})$ for $\cos(\a)\in[-1,0)$ so that 
\eqref{121818curvature} holds for all $\cos(\a)\in[-1,1]$ and $\r>0$.   
{From} \eqref{122618curvature} and the fact that $\cA(\r)=\sin(\r)$ for small
$\r$ it follows that $\A$ solves the equation  
$\cA''(\r)+\Sec(\Pi_{\r;1})\A(\r)=0$ with $\cA(0)=0$ and $\cA'(0)=1$.

\medskip

The previous discussion indicates that the geometry induced by $g$ on
$\R^{n+1}$ is entirely determined by the radial curvature function
$K^\parallel$, thus our goal will be to choose it appropriately. 
We let, for $\r\geq 0$, $r>0$ and $\e\geq 0$,
\begin{equation}\label{eq3}
K_{r,\e}^{\|}(\rho)=
\begin{cases}
1-2\fe(\frac{\rho-r}{\e}) &  \qquad \e>0 \\
1-2H(\rho-r) &  \qquad \e=0,
\end{cases}
\end{equation}
where $\fe \in C^\infty(\R)$ satisfies $0\leq \fe\leq 1$, 
$\fe(\rho)=0$ for $\rho \leq 0$ and $\fe(\rho)=1$ for $\rho \geq 1$, and
$H$ is the Heaviside function:  $H(\r)=0$ if $\r\leq 0$, $H(\r)=1$ if
$\r>0$.  In particular, $K_{r,\e}^{\|}(\r)=1$ for $\r\leq r$.  Observe that 
$K_{r,\e}^{\|}$ is $C^\infty$ if $\e>0$ and is 
piecewise $C^\infty$ if $\e=0$. Moreover, for each $r$, 
$K_{r,\e}^\parallel- K_{r,0}^\parallel\to 0$ in $L^1([0,\infty))$ as 
$\e\to 0$.    

Define $\cA_{r,\e}$ to be the solution to  
\begin{equation}\label{Aeq}
\cA''+K^{\|}_{r,\e}\cA=0,\qquad \cA(0)=0,\quad \cA'(0)=1,  
\end{equation}
where if $\e=0$, $\cA_{r,0}$ is interpreted as a weak solution.  This 
means that it is the unique $C^1$ function satisfying the initial
conditions in $\eqref{Aeq}$ which in addition solves the differential 
equation in the open intervals where $K_{r,0}^\|$ is smooth.  
Observe that for all $\e\geq 0$,    
\begin{equation}\label{Aformula}
\cA_{r,\e}(\rho)=
\begin{cases}
\sin(\rho) & \rho\leq r\\ 
a_+e^{\rho}+a_-e^{-\rho} & \rho\geq r +\e, 
\end{cases}
\end{equation}
where $a_\pm$ depend on $r$, $\e$.   
When $\e=0$, the values of $a_\pm$ are determined
by matching the value and the derivative at $\r=r$ with those of
$\sin(\r)$.  The case $r=\pi/4$ is special in that $a_-=0$:   
\begin{equation}\label{specialA}
\cA_{\pi/4,0}(\r)=\frac{\sqrt{2}}{2}e^{\r-\pi/4}\qquad\qquad \r\geq \pi/4.
\end{equation}
This is fortuitous, because as we will see, $r=\pi/4$ is precisely the
value for which the corresponding metric has boundary conjugate  
points but no interior conjugate points.  It is not true that $a_-=0$ for
other choices of $r$, including the degenerate case $r=0$, $\e=0$, which
corresponds to hyperbolic space.  

The Sturm Comparison Theorem implies that $\cA_{r,\e}>0$ on $(0,\pi)$ and
comparison of the Pr\"ufer angle (Theorem 1.2, p. 210 of  
\cite{MR0069338}) shows that $\cA_{r,\e}'>0$ on $(0,\pi/2)$.  Since  
$\cA_{r,\e}''=\cA_{r,\e}$ on $(r+\e,\infty)$, it follows that
$\cA_{r,\e}>0$ and $\cA_{r,\e}'>0$ on   
$(0,\infty)$ if we require $r+\e<\pi/2$, which we do henceforth.  In
particular, $a_+>0$ in \eqref{Aformula}.  
Ultimately we will only care about $r$ near $\pi/4$ and $\e$ near $0$.   

We will denote by $g_{r,\e}$ the metric given by \eqref{eq1} with $\cA$
replaced by $\cA_{r,\e}$.  
So $g_{r,\e}$ restricted to the geodesic ball $B_r(0)$ centered at 
the origin is isometric to the corresponding geodesic ball in $\S^{n+1}$,
and in particular the sectional curvatures of $g_{r,\e}$ are all equal to 1
for $\r<r$.  The sectional curvature of 2-planes parallel to the radial
direction is $-1$ for $\r>r+\e$, but not for other 2-planes if $n\geq 2$.  
The metric $g_{r,\e}$ is asymptotically hyperbolic (but only
$C^{1,1}$ if $\e=0$) if 
$\R^{n+1}$ is radially compactified with defining function $e^{-\r}$ for
the boundary at infinity.  In particular, $g_{r,\e}$ is complete.

Our goal in Section~\ref{c1manifoldsection} is to show that 
$g_{\pi/4,0}$ satisfies all of the properties stated in 
Theorem~\ref{010319maintheorem} except for smoothness.    

\subsection{Geodesics and Sectional
  Curvature}\label{013019curvaturegeodesics}

Since $g=g_{r,\e}$ is at least $C^{1,1}$, it determines geodesics of class
at least $C^{2,1}$.  Provided a unit speed geodesic $\g(t)$ of $g$
is not radial, $\g(0)$ and $\g'(0)$ 
determine a unique 2-plane through the origin denoted by $\Sigma_\g$; as
mentioned earlier, $\Sigma_\g$ is totally geodesic and hence $\g$ is
entirely contained in it.  For radial geodesics $\g$, we will write 
$\Sigma_\g$ for any 2-plane containing $\g$.

To study any unit speed geodesic $\g$ it is sufficient to work
in $\Sigma_\g$ with induced metric  
$g\big|_{\Sigma_\g}=d\r^2+\cA^2(\r)d\th^2$, where $\cA=\cA_{r,\e}$.     
According to \eqref{091218:Christoffel}, $\rho(t):=\rho(\g(t))$ satisfies
the equation
\begin{equation}\label{rhoequation}
\r''=\cA^{-1}(\r)\cA'(\r)\left(1-(\r')^2\right).
\end{equation}  
If $\g$ is not radial, the initial conditions take the 
form $\rho(0)=s>0$, $\rho'(0)=v$ with $|v|<1$.  
It is evident from \eqref{Aformula} that there is $a=a_{r,\e}>0$ so that  
$\cA^{-1}(\r)\cA'(\r)\geq a$ for all $\r>0$.  A comparison theorem
(e.g. Theorem 11.XVI of \cite{MR1629775}) implies that $\r(t)\geq 
\rb(t)$ for all $t\in \R$, where $\rb$ is the solution of  
\begin{equation}\label{constgeodeq}
\r''=a\left(1-(\r')^2\right)
\end{equation}  
satisfying the same initial conditions.  Equation \eqref{constgeodeq} is
separable for $\r'$; the solution is
\begin{equation}\label{rhosolution}
\rb(t)=s
+a^{-1}\log\Big(\tfrac12\big((1+v)e^{at}+(1-v)e^{-at}\big)\Big).   
\end{equation}
It follows in particular that $\r(t)\to \infty$ as $t\to \pm \infty$ so 
that $g_{r,\e}$ is nontrapping.   
Since $\r''>0$, $\r$ achieves its minimum at a unique time which we
take to be $t=0$.  The corresponding point is the closest point on 
$\g$ to the origin, whose distance to the origin we write $s$.  We denote  
this solution by $\r_{s,r,\e}$; it is thus the solution to 
\eqref{rhoequation} with $\cA=\cA_{r,\e}$ and with initial
conditions 
$\rho(0)=s>0$, $\rho'(0)=0$.  For a radial geodesic the distance to the
origin is $s=0$ and the corresponding
solution is $\r_{0,r,\e}(t)=t$.  We denote by $\g_{s,r,\e}$ any unit speed
geodesic with radial coordinate function $\r_{s,r,\e}$.

If $s< r$, then $\g_{s,r,\e}$ intersects the geodesic ball $B_r(0)$  
where the curvature is 1 and $\cA(\rho) = \sin(\rho)$.   
In this case, it is easily checked
by directly verifying \eqref{rhoequation} and the initial 
conditions that 
\begin{equation}\label{rhotsmall}
\r_{s,r,\e}(t)=\arccos\big(\!\cos(s)\cos(t)\big).
\end{equation}
This holds up to the time $t$ such that $\r_{s,r,\e}(t)=r$.  We denote this
time by $\ell_r(s)$; geometrically this is the distance between
$\g_{s,r,\e}(0)$ and $\p B_r(0)$ and clearly it is given by  
\begin{equation}\label{ell}
\ell_r(s) = \arccos\left(\frac{\cos(r)}{\cos(s)}\right).
\end{equation}
For future reference note that 
\begin{equation}\label{rhoprime}
\r_{s,r,\e}'(\ell_r(s))= 
\frac{\sqrt{\cos^2(s)-\cos^2(r)}}{\sin(r)}.
\end{equation}
This also has a geometric interpretation:  since $\p_\r$ and $\g'$ are unit
vectors, 
$\r_{s,r,\e}'(\ell_r(s))=\lg \g'(\ell_r(s)),\p_\r\rg= \cos{(\a)}$, 
where $\a$ is the the angle between $\g'(t)$ and $\p_\r$ when
$t=\ell_r(s)$, i.e. where $\r(t)=r$.  The above  formulas for 
$\r_{s,r,\e}(t)$, $\ell_r(s)$ and $\r_{s,r,\e}'(\ell_r(s))$ can also be 
derived directly via the geometry of $\S^2$.

Our primary focus in Section~\ref{c1manifoldsection} is  the 
case $r=\pi/4$, $\e=0$.  We suppress these subscripts, so for instance  
subsequently we write $g=g_{\pi/4,0}$, $\g_s=\g_{s,\pi/4,0}$,    
$\r_s(t)=\r_{s,\pi/4,0}(t)=\r(\g_s(t))$, $\ell(s)=\ell_{\pi/4}(s)$.  Note
that for $r=\pi/4$, \eqref{rhoprime} reduces to 
$\rho_s'(\ell(s))=\sqrt{\cos(2s)}$.   

When $r=\pi/4$ and $\e=0$, \eqref{specialA} shows that
\eqref{rhoequation} for $\r>\pi/4$   
reduces to \eqref{constgeodeq} with $a=1$.  The initial conditions for
$\r_s$  are $\r_s(\ell(s))=\pi/4$, $\r_s'(\ell(s))=\sqrt{\cos(2s)}$.  The
solution is given by \eqref{rhosolution} with $t$ replaced by $t-\ell(s)$.
It can be written in the form
\[
\r_s(t)=\pi/4 + \log F(t,s)\qquad t\geq \ell(s),
\]
where
\begin{equation}\label{F}
F(t,s)=\cosh(t-\ell(s))+\sqrt{\cos(2s)}\sinh(t-\ell(s)).
\end{equation}
For $t\leq -\ell(s)$ one has $\r_s(t)=\r_s(-t)$.  

Equation \eqref{121818curvature} expresses the sectional curvatures of $g$
in terms of the distance $\r$ to the origin and the angle $\a$ between $\p_\r$
and the plane $\Pi$.  In our subsequent analysis of Jacobi fields, the 
sectional curvature for $g_{r,\e}$ along a geodesic   
$\g_{s,r,\e}(t)$ of the plane spanned by $\g_{s,r,\e}'(t)$ and a vector
normal to $\Sigma_{\gamma_{s,r,\e}}$ will play a fundamental role.  Since 
$\rho_{s,r,\e}'(t)=\cos(\angle(\g_{s,r,\e}'(t),\p_\r))$,   
the sectional curvature of interest is
\[
K_{s,r,\e}(t):=\Sec(\Pi_{\r_{s,r,\e}(t);\r_{s,r,\e}'(t)}).   
\]
As usual we write $K_s=K_{s,\pi/4,0}$.

\begin{lemma}\label{122818lemma}
Suppose $n\geq 2$ and $0\leq s< \pi/4$.  Then
\begin{equation}\label{121618curvature}
K_s(t)=
\begin{cases}
1, 				& 0\leq |t|< \ell(s)\\
-1+4\sin^2(s)F^{-4}(|t|,s), &|t|>\ell(s) 
\end{cases} 
	\end{equation}
\end{lemma}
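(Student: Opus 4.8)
The plan is to apply formula \eqref{121818curvature}, which expresses $\Sec(\Pi_{\r;\cos\a})$ in terms of $K^\parallel(\r)$ and $K^\perp(\r)$, along the geodesic $\g_s$. By definition, $K_s(t) = \Sec(\Pi_{\r_s(t);\r_s'(t)})$, so I need to evaluate $\cos^2(\a)K^\parallel(\r_s(t)) + \sin^2(\a)K^\perp(\r_s(t))$ where $\cos(\a) = \r_s'(t)$; here $K^\parallel = K^{\|}_{\pi/4,0}$ as in \eqref{eq3} with $r=\pi/4$, $\e=0$, and $K^\perp$ is computed from $\cA = \cA_{\pi/4,0}$ via \eqref{eq:Kperp}. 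The computation splits into the two regimes $|t|<\ell(s)$ and $|t|>\ell(s)$, corresponding to $\r_s(t)<\pi/4$ and $\r_s(t)>\pi/4$.

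For $|t|<\ell(s)$, we have $\r_s(t)<\pi/4$, so $\cA(\r)=\sin(\r)$ and hence $K^\parallel(\r) = -\sin(\r)^{-1}\cdot(-\sin(\r)) = 1$ and $K^\perp(\r) = \sin^{-2}(\r) - \sin^{-2}(\r)\cos^2(\r) = 1$; thus $\Sec \equiv 1$ regardless of the angle, giving the first case. For $|t|>\ell(s)$, we have $\r_s(t)>\pi/4$ and \eqref{specialA} gives $\cA(\r) = \tfrac{\sqrt2}{2}e^{\r-\pi/4}$, so $\cA'=\cA$, $\cA''=\cA$, whence $K^\parallel(\r) = -\cA^{-1}\cA'' = -1$, consistent with \eqref{eq3}. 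For $K^\perp$ I compute $\cA^{-2}(\r) = 2e^{-2(\r-\pi/4)}$ and $(\cA'(\r))^2\cA^{-2}(\r) = 1$, so $K^\perp(\r) = 2e^{-2(\r-\pi/4)} - 1$. Substituting $\r = \r_s(|t|) = \pi/4 + \log F(|t|,s)$ from \eqref{F} gives $e^{-2(\r_s(|t|)-\pi/4)} = F^{-2}(|t|,s)$, so $K^\perp(\r_s(|t|)) = 2F^{-2}(|t|,s) - 1$.

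It remains to combine these with the angle. Using $\cos^2(\a) = \r_s'(t)^2$ and $\sin^2(\a) = 1-\r_s'(t)^2$, and writing $u=|t|$, formula \eqref{121818curvature} gives
\begin{equation*}
K_s(t) = \r_s'(u)^2\cdot(-1) + (1-\r_s'(u)^2)\big(2F^{-2}(u,s)-1\big) = -1 + 2\big(1-\r_s'(u)^2\big)F^{-2}(u,s).
\end{equation*}
So the claim \eqref{121618curvature} reduces to the identity $1-\r_s'(u)^2 = 2\sin^2(s)F^{-2}(u,s)$ for $u>\ell(s)$. To verify this, differentiate \eqref{F}: $F'(u,s) = \sinh(u-\ell(s)) + \sqrt{\cos(2s)}\cosh(u-\ell(s))$, and since $\r_s(u) = \pi/4 + \log F(u,s)$ we get $\r_s'(u) = F'(u,s)/F(u,s)$. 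Then
\begin{equation*}
F^2 - (F')^2 = \big(\cosh^2 - \sinh^2\big) + \cos(2s)\big(\sinh^2-\cosh^2\big) = 1 - \cos(2s) = 2\sin^2(s),
\end{equation*}
using the argument $u-\ell(s)$ throughout; hence $1 - \r_s'(u)^2 = \big(F^2-(F')^2\big)/F^2 = 2\sin^2(s)F^{-2}(u,s)$, as needed. This finishes the computation; continuity of $K_s$ across $|t|=\ell(s)$ (both expressions equal $1$ there, since $F(\ell(s),s)=1$ and $\r_s'(\ell(s))^2 = \cos(2s)$, giving $-1 + 2(1-\cos(2s)) = -1+4\sin^2(s) = \ldots = 1$) is an automatic consistency check rather than a separate step. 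The only mild subtlety — and the thing worth stating carefully — is that the curvature formulas \eqref{122618curvature}, \eqref{eq:Kperp}, \eqref{121818curvature} are being applied for the $C^{1,1}$ metric $g_{\pi/4,0}$ on the open sets $\r<\pi/4$ and $\r>\pi/4$ where $\cA$ is smooth, so there is no regularity issue in the interior of either regime; the value at $\r=\pi/4$ itself is irrelevant since that corresponds to the single time $|t|=\ell(s)$, and the stated piecewise formula is what we use downstream in the Jacobi field analysis.
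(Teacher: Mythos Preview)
Your proof is correct and follows essentially the same route as the paper: apply \eqref{121818curvature} with $\cos\a=\rho_s'(t)$, compute $K^\parallel$ and $K^\perp$ explicitly in the two regimes from \eqref{specialA}, and reduce the $|t|>\ell(s)$ case to the identity $1-\rho_s'(u)^2=2\sin^2(s)F^{-2}(u,s)$, which you verify via $F^2-(F')^2=1-\cos(2s)$.

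One small correction: your parenthetical ``consistency check'' claiming continuity of $K_s$ across $|t|=\ell(s)$ is wrong. At $|t|=\ell(s)$ the outer expression gives $-1+4\sin^2(s)=1-2\cos(2s)$, which is strictly less than $1$ for $0\leq s<\pi/4$, so $K_s$ has a jump there. This is expected, since $g_{\pi/4,0}$ is only $C^{1,1}$ and its curvature is merely piecewise smooth; the lemma only asserts the piecewise formula, not continuity, and the downstream Jacobi-field analysis uses weak solutions precisely to accommodate this jump. Drop that remark and the argument is clean.
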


\begin{proof}		
For $\r>\pi/4$ we have $K^\parallel(\r)=-1$ and
$K^\perp(\r)=-1+2e^{-2(\r-\pi/4)}$. 
Hence, for $|t|> \ell(s)$  \eqref{121818curvature} yields
\begin{align}\label{121718curvature} 
\hspace*{-.2 in}
\Sec(\Pi_{\r_s(t);\r_s'(t)})
=&(\dr_s(t))^2(-1)+(1-(\dr_s(t))^2)\left(-1+2e^{-2(\r_s(t)-\pi/4)}\right)\nonumber\\     
=&-1+4\sin^2(s)F^{-4}(|t|,s).  
\end{align}
\end{proof}

There is a similar analysis for geodesics that do not intersect the
geodesic ball $B_{\pi/4}(0)$.  This time \eqref{specialA} holds
along the whole geodesic, and the solution \eqref{rhosolution}
of the geodesic equation satisfying the initial conditions $\rho(0)=s$,
$\rho'(0)=0$ is $\rho_s(t)=s+\log(\cosh(t))$.  Repeating the computation 
\eqref{121718curvature} yields the following.
\begin{lemma}\label{012719lemma}
Suppose $n\geq 2$ and $s\geq \pi/4$.  Then 
\begin{equation}\label{012319curvature}
K_s(t)=-1+2e^{-2s+\pi/2}\sech^4(t),\quad t\in \R.
\end{equation}
\end{lemma}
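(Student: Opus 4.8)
The plan is to argue exactly as in the proof of Lemma~\ref{122818lemma}, the only change being that the whole geodesic $\g_s$ now lies in the region $\r\geq \pi/4$, where $\cA_{\pi/4,0}$ is the pure exponential in \eqref{specialA}. First I would record the radial solution, already identified in the discussion preceding the statement: since $s\geq \pi/4$ one has $\r_s(t)\geq \pi/4$ for every $t$ (with equality only when $s=\pi/4$, $t=0$), so \eqref{specialA} governs $\cA$ along all of $\g_s$. Consequently \eqref{rhoequation} reduces to \eqref{constgeodeq} with $a=1$, and the solution with $\r_s(0)=s$, $\r_s'(0)=0$ is $\r_s(t)=s+\log(\cosh(t))$ by \eqref{rhosolution}. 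Differentiating gives $\dr_s(t)=\tanh(t)$, hence $1-(\dr_s(t))^2=\sech^2(t)$.

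Since $n\geq 2$, the transverse curvature $K^\perp$ of \eqref{eq:Kperp} is defined, and I would next evaluate the two curvature functions entering \eqref{121818curvature} on $\{\r>\pi/4\}$. Because $\cA_{\pi/4,0}(\r)=\tfrac{\sqrt2}{2}e^{\r-\pi/4}$ satisfies $\cA''=\cA'=\cA$ there, \eqref{122618curvature} gives $K^\parallel(\r)=-1$ and \eqref{eq:Kperp} gives
\[
K^\perp(\r)=\cA^{-2}(\r)-\cA^{-2}(\r)(\cA'(\r))^2=\cA^{-2}(\r)-1=2e^{-2(\r-\pi/4)}-1=-1+2e^{-2\r+\pi/2},
\]
exactly as in the proof of Lemma~\ref{122818lemma}. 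Putting $\cos(\a)=\dr_s(t)$ into \eqref{121818curvature} and then substituting $\r=\r_s(t)$, for which $e^{-2\r_s(t)+\pi/2}=e^{-2s+\pi/2}\sech^2(t)$, yields
\[
K_s(t)=(\dr_s(t))^2(-1)+(1-(\dr_s(t))^2)\big(-1+2e^{-2\r_s(t)+\pi/2}\big)=-\tanh^2(t)-\sech^2(t)+2e^{-2s+\pi/2}\sech^4(t),
\]
and the identity $\tanh^2(t)+\sech^2(t)=1$ yields \eqref{012319curvature} for all $t\in\R$.

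This computation is routine and entirely parallel to \eqref{121718curvature}, so I anticipate no real obstacle. The only point worth a word is the endpoint case $s=\pi/4$, where $\r_s$ attains the value $\pi/4$ at the single time $t=0$: there $\dr_s(0)=0$, so the relevant plane is $\Pi_{\pi/4;0}$, and $K^\perp$ extends continuously across $\r=\pi/4$ with value $\cA^{-2}(\pi/4)-1=1$, in agreement with the constant curvature $1$ inside $B_{\pi/4}(0)$; correspondingly \eqref{012319curvature} evaluates to $-1+2\cdot1\cdot1=1$ at $(t,s)=(0,\pi/4)$. Only $K^\parallel$ actually jumps at $\r=\pi/4$, and since $\dr_s(t)=\tanh(t)\in(-1,1)$ the relevant 2-plane never contains $\p_\r$, so this jump is never seen along $\g_s$; hence no case distinction beyond this remark is needed.
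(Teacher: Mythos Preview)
Your proof is correct and follows essentially the same approach as the paper: identify $\rho_s(t)=s+\log(\cosh(t))$ from \eqref{rhosolution} since \eqref{specialA} holds along the whole geodesic, then repeat the computation \eqref{121718curvature}. Your additional remark on the endpoint case $s=\pi/4$, $t=0$ is a nice clarification that the paper omits.
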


In the analysis above we have only used formulas for the
radial coordinate of geodesics. 
We summarize them here and for completeness also provide the angular
coordinate $\th$ for a geodesic, even though it will not play a role in 
this paper. 
With $F(t,s)$ as in \eqref{F}, 
\begin{equation}\label{0318geodesic}
\r_s(t)=\begin{cases}\displaystyle\arccos(\cos(t)\cos(s)),
&0\leq s< \pi/4, \,|t|\leq \ell(s)\\ 
\log\big(F(|t|,s)\big)+\pi/4, &0\leq s< \pi/4, \,|t|> \ell(s)\\
\log(\cosh(t))+s, & s\geq \pi/4, \,t\in \R 
\end{cases}.
\end{equation}
Setting
\begin{equation}
\th_s(t):=\begin{cases}\displaystyle\arcsin\left(\frac{\sin(t)}{\sqrt{1-\cos^2(s)\cos^2(t)}}\right),   
&0\leq s<  \pi/4, \\ 
&|t|\leq \ell(s)\\
\displaystyle\text{sgn}(t)\left(\frac{2
\sin (s) \sinh \left(\left| t\right|
-\ell(s)\right)}{F(|t|,s)}+\arcsin\big(\sqrt{1-\tan^2(s)}\big)\right),
&0\leq s <\pi/4, \\ 
&|t|> \ell(s)\\
\sqrt{2}\tanh(t)e^{-s+\pi/4}, & s \geq  \pi/4, \\ 
&t\in \R
\end{cases},
\end{equation}
the curve $(\r_s(t),\th_s(t))$ on $\Sigma_{\g_s}$ satisfies
the geodesic equation for each $s\geq 0$. 
Any other geodesic on $\Sigma_{\g_s}$ can be obtained by translation in $\th$.

\subsection{Analysis of Jacobi Fields}\label{121118section}

In this subsection we first identify the scalar equations solved by 
normal Jacobi fields for $g_{r,\e}$.  Then we  
compute explicitly the normal Jacobi
fields of the $C^{1,1}$ metric $g=g_{\pi/4,0}$ and show
that $(\R^{n+1},g)$ has no interior conjugate points 
but has boundary conjugate points.  

The following general fact can be proved using the Gauss and Codazzi equations:
\begin{proposition}\label{propositiontotallygeodesic}
Let $(M,g_M)$ be a totally geodesic submanifold of a Riemannian manifold
$(\tM,g_{\tM})$.  Let $\g$ be a geodesic contained in $M$ and $Y$ be a
normal $g_{\tM}$-Jacobi field along $\g$.  When $Y$ is decomposed as
$Y=Y_1+Y_2$, where $Y_1$ is everywhere tangent and $Y_2$ is everywhere
normal to $M$, then $Y_2$ is a Jacobi field in $\tM$ and $Y_1$ is a Jacobi
field in both $M$ and $\tM$.
\end{proposition}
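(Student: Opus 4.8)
The plan is to use the fact that a totally geodesic submanifold $M\subset\tM$ has vanishing second fundamental form, so the $g_{\tM}$-covariant derivative along $\g$ of any vector field tangent to $M$ stays tangent, and likewise (by the Weingarten equation, since the shape operator vanishes) the covariant derivative of a normal field stays normal. Concretely, first I would fix the geodesic $\g\subset M$ and note that $\g'$ is tangent to $M$; decomposing $Y=Y_1+Y_2$ with $Y_1$ tangent and $Y_2$ normal to $M$ along $\g$, the vanishing of $\II$ gives that $\tn_{\g'}Y_1$ is tangent to $M$ and $\tn_{\g'}Y_2$ is normal to $M$ all along $\g$. Hence $\tn_{\g'}^2 Y_1$ is tangent and $\tn_{\g'}^2 Y_2$ is normal. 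Moreover, since $M$ is totally geodesic, $\tn_{\g'}Y_1$ coincides with the intrinsic covariant derivative $\n_{\g'}^M Y_1$, so $\tn_{\g'}^2 Y_1 = \n^M_{\g'}\n^M_{\g'}Y_1$.

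Next I would invoke the Jacobi equation for $Y$ in $\tM$:
\[
\tn_{\g'}^2 Y + R_{\tM}(Y,\g')\g'=0,
\]
and split $R_{\tM}(Y,\g')\g' = R_{\tM}(Y_1,\g')\g' + R_{\tM}(Y_2,\g')\g'$. The key structural input is the Gauss equation together with the Codazzi equation: because $\II\equiv 0$ on $M$, for vectors $X,Z,W$ tangent to $M$ one has $R_{\tM}(X,Z)W$ tangent to $M$ with tangential part equal to $R_M(X,Z)W$ (Gauss), while for $X,Z$ tangent and $Y_2$ normal, the Codazzi equation forces the tangential component of $R_{\tM}(Y_2,\g')\g'$ — equivalently $\langle R_{\tM}(Y_2,\g')\g', X\rangle$ for $X$ tangent — to vanish, since that term is expressed through derivatives of $\II$. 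Thus $R_{\tM}(Y_1,\g')\g'$ is tangent to $M$ (and equals $R_M(Y_1,\g')\g'$) and $R_{\tM}(Y_2,\g')\g'$ is normal to $M$. Then I would take the tangential and normal parts of the Jacobi equation separately: the tangential part reads $\n^M_{\g'}\n^M_{\g'}Y_1 + R_M(Y_1,\g')\g' = 0$, i.e. $Y_1$ is a Jacobi field in $M$; since each of these tangential terms equals its $\tM$-counterpart acting on $Y_1$, adding back $R_{\tM}(\cdot)$ and $\tn^2(\cdot)$ shows $Y_1$ is also a $\tM$-Jacobi field. The normal part gives $\tn^2_{\g'}Y_2 + R_{\tM}(Y_2,\g')\g' = 0$ directly, so $Y_2$ is a $\tM$-Jacobi field. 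Since the full field $Y$ is normal to $\g$ and $Y_1$ is proportional to $\g'$ plus a $\g'^\perp$ part within $TM$, the decomposition is compatible with normality; in fact one can also arrange $Y_1\perp\g'$, but that refinement is not needed for the statement.

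The step I expect to be the main obstacle is verifying cleanly that the mixed curvature term $R_{\tM}(Y_2,\g')\g'$ has no component tangent to $M$ — this is exactly where the Codazzi equation enters, and one must be careful that $Y_2$ is only defined along $\g$ (not a field on all of $M$), so the Codazzi identity has to be applied pointwise after extending $Y_2$ to a local normal field or by using that $\langle R_{\tM}(Y_2,\g')\g',X\rangle$ depends only on the values of the fields at a point. Once that tangential-normal splitting of the curvature term is established, the rest is just decomposing the Jacobi equation into tangential and normal parts and reading off the three claims; I would present that part briskly, as it is routine linear algebra along $\g$.
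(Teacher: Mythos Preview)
Your proposal is correct and is exactly the argument the paper has in mind: the paper does not write out a proof but simply states that the proposition ``can be proved using the Gauss and Codazzi equations,'' which is precisely what you do. Your handling of the only subtle point---that $R_{\tM}(Y_2,\g')\g'$ has no tangential component---is fine once you observe (via the pair symmetry of $R$) that it reduces to $\langle R_{\tM}(\g',X)\g',Y_2\rangle=0$, which is the Codazzi identity with $\II\equiv 0$; since this is a pointwise tensorial statement your worry about $Y_2$ being defined only along $\g$ is indeed harmless.
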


\noindent
Proposition \ref{propositiontotallygeodesic} implies that to analyze normal
Jacobi fields along a geodesic $\g$ of $g_{r,\e}$, it is enough
to analyze separately Jacobi fields tangent and normal to $\Sigma_\g$.

Consider first a geodesic $\g\subset \Sigma_\g$ and a Jacobi field $Y(t)$
normal to $\g$ but tangent to $\Sigma_\g$ of the form $Y(t)=\cY(t)E(t)$,
where $E(t)$ is a parallel vector field along $\g$ and $\cY$ is real  
valued.  Since $Y(t)$ is a Jacobi field in the 2-dimensional manifold 
$\Sigma_\g$ and the radial vector field is parallel to $\Sigma_\g$, 
$\cY(t)$ solves the scalar Jacobi equation
\begin{equation}\label{jacobiparallel}
	\cY''(t)+K_{r,\e}^\parallel(\r_{s,r,\e}(t))\cY(t)=0.
\end{equation}

Next consider Jacobi fields along a geodesic  
$\g$ that are orthogonal to the plane $\Sigma_\g$.  The following lemma
reduces the problem to the study of scalar equations.   

\begin{lemma}\label{032219lemma}
Let $n\geq2$, $\g\subset \Sigma_\g $ be a unit speed
geodesic for $g_{r,\e}$ and $Y\perp \Sigma_\g$ a Jacobi field along it.  
Then $Y$ satisfies the scalar Jacobi equation 
\begin{equation}
D_t^2Y(t)+K_{s,r,\e}(t)Y(t)=0. 
\end{equation}
\end{lemma}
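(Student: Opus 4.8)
The plan is to reduce the vector Jacobi equation $D_t^2 Y + R(Y,\g')\g' = 0$ to a scalar ODE by exploiting the warped product structure and the fact that $\Sigma_\g$ is totally geodesic. First I would set up a suitable parallel frame: since $Y\perp\Sigma_\g$ and $\dim\Sigma_\g=2$, at each point $Y(t)$ lies in the orthogonal complement of the 2-plane $\Sigma_\g$ inside $T_{\g(t)}\R^{n+1}$, which is $(n-1)$-dimensional. I would pick an orthonormal frame $E_1(t),\dots,E_{n-1}(t)$ for this complement along $\g$; the key point is that these can be chosen parallel along $\g$. This needs a brief justification: because $\Sigma_\g$ is totally geodesic, parallel transport along $\g$ preserves both $T\Sigma_\g$ and its normal bundle (the second fundamental form vanishes, so the normal connection agrees with the ambient one), hence one can parallel transport an orthonormal basis of $(\Sigma_\g)^\perp$ at $\g(0)$. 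Then write $Y(t)=\sum_\alpha \cY^\alpha(t)E_\alpha(t)$, and since the $E_\alpha$ are parallel, $D_t^2 Y = \sum_\alpha (\cY^\alpha)'' E_\alpha$.

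Next I would compute the curvature term. I need $R(E_\alpha,\g')\g'$ for each $\alpha$. Decompose $\g' = \rho' \p_\rho + (\text{component tangent to }\S^n)$; more precisely $\g'$ lies in $\Sigma_\g$, spanned by $\p_\rho$ and the angular direction. Using Proposition~\ref{042919prop}: the term $R(\p_\rho, E_\alpha)\p_\rho = -\cA^{-1}\cA'' E_\alpha = K^\parallel E_\alpha$ contributes, the mixed terms involving $R(V,W)\p_\rho = 0$ by item~\eqref{042919item} drop out, and the purely tangential term $R(V,W)U$ from item~(4) contributes the $K^\perp$ piece when $U,W$ involve the angular part of $\g'$ and $V=E_\alpha$. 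Carefully collecting, and using that the angle $\alpha(t)$ between $\g'(t)$ and $\p_\rho$ satisfies $\cos\alpha(t)=\rho_{s,r,\e}'(t)$, the coefficient multiplying $E_\alpha$ works out to $(\rho')^2 K^\parallel + (1-(\rho')^2)K^\perp$, which by \eqref{121818curvature} is exactly $\Sec(\Pi_{\rho_{s,r,\e}(t);\rho_{s,r,\e}'(t)}) = K_{s,r,\e}(t)$. This identification is uniform in $\alpha$, so each component $\cY^\alpha$ satisfies $(\cY^\alpha)'' + K_{s,r,\e}(t)\,\cY^\alpha = 0$, which is the claimed scalar equation written vectorially as $D_t^2 Y + K_{s,r,\e}(t)Y = 0$.

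I expect the main obstacle to be the curvature computation of the middle paragraph: one must be careful about which vectors are genuinely $g$-orthonormal versus merely coordinate directions (the angular direction $\p_\theta$ has length $\cA(\rho)$, not $1$), and one must correctly extract the angular component of $\g'$ and its norm. Getting the bookkeeping right so that the $K^\parallel$ and $K^\perp$ pieces assemble precisely into \eqref{121818curvature} evaluated at $\cos\alpha = \rho'$ is the crux. A secondary technical point worth a sentence is the $C^{1,1}$/weak regularity when $\e=0$: the argument is pointwise in the regions where $\cA_{r,0}$ is smooth, and the Jacobi field, understood weakly, is $C^1$, so the scalar equation holds in the same weak sense — but since $K_{s,r,\e}$ is continuous for $\e>0$ everything is classical there, and for $\e=0$ one simply restricts to the complementary open intervals exactly as in the definition of $\cA_{r,0}$.
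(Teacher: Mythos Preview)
Your proposal is correct and follows essentially the same route as the paper: decompose $\g'$ into its radial and spherical components and use Proposition~\ref{042919prop} to see that $R(\,\cdot\,,\g')\g'$ acts as the scalar $K_{s,r,\e}(t)$ on vectors orthogonal to $\Sigma_\g$. The only cosmetic difference is that you introduce a parallel orthonormal frame $E_1,\dots,E_{n-1}$ for the normal bundle and work componentwise, whereas the paper argues directly that $R(\g',Y)\g'=a(t)Y$ for any $Y\perp\Sigma_\g$ and then identifies $a(t)$ by recognizing the plane $\Pi_{\rho_\mu(t);\rho_\mu'(t)}$; your frame step is unnecessary but harmless.
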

\begin{proof}
It is sufficient to show that $R(\g'(t),Y(t))\g'(t)=a(t)Y(t)$, 
$t\in \R$, for some scalar function $a(t)$; then necessarily  $a(t)=K_{s,r,\e}(t)$, since the plane 
determined by $\g'(t)$ and $Y(t)$ is 
is of the form $\Pi_{\r_{s,r,\e}(t);\r'_{s,r,\e}(t)}$.  
The statement is local, so we can use polar coordinates $(\r,\th)$
on $\Sigma_\g$ to write $\g'(t)=\l(t)\p_\r+\mu(t)\p_\th$. 
This implies 
\begin{align}
R(\g',Y)\g'=&\l^2R(\p_\r,Y)\p_\r+\l\mu(R(\p_\r,Y)\p_\th+R(\p_\th,Y)\p_\r)+\mu^2R(\p_\th,Y)\p_\th. 
\end{align}
By Proposition \ref{042919prop}, for the first term we have
$R(\p_\r,Y(t))\p_\r=-\cA''(t)/\cA(t) Y(t)$, the second term
vanishes and for the third we have  
\begin{equation}
R(\p_\th,Y(t))\p_\th=R_{\S^n}(\p_\th,Y(t))\p_\th-(\cA'(t))^2/\cA^2(t)|\p_\th|^2_gY(t).
\end{equation}
Now $R_{\S^n}(\p_\th,Y(t))\p_\th$ $=Y(t)$ since $\S^n$ has constant
sectional curvature $1$, so the lemma is proved. 
\end{proof}

So if we take $Y(t)$ as in Lemma \ref{032219lemma} of the form $Y(t)=\cY(t)E(t)$, where $E(t)$ is a 
parallel vector field along $\g$, then $\cY$ solves the equation
\begin{equation}\label{jacobiperp}
\cY''(t)+K_{s,r,\e}(t)\cY(t)=0 
\end{equation}
where 
\begin{equation}\label{KY}
K_{s,r,\e}(t)
=(\rho_\mu'(t))^2 K_{r,\e}^\|(\rho_\mu(t)) + \big(1-(\rho_\mu'(t))^2\big)
K^\perp_{r,\e}(\rho_\mu(t)),\quad \mu = (s,r,\e)
\end{equation}
and $K^\perp_{r,\e}(\rho)$ is given by \eqref{eq:Kperp} with $\cA$
replaced by $\cA_{r,\e}$.  Note that
$K_{s,r,\e}=K_{r,\e}^\parallel\circ \r_{s,r,\e}$ for $s=0$.  So for radial
geodesics the equations \eqref{jacobiparallel} and \eqref{jacobiperp}
for Jacobi fields tangent and normal to $\Sigma_\g$ coincide.  

We write $\cU^\parallel_{s,r,\e}(t)$, $\cV_{s,r,\e}^\parallel(t)$ for the 
solutions (weak solutions if $\e=0$) of \eqref{jacobiparallel} with the
initial conditions 
$\cU^\parallel_{s,r,\e}(0)=1$, $\cU^\parallel_{s,r,\e}{}'(0)=0$ and 
$\cV^\parallel_{s,r,\e}(0)=0$, $\cV^\parallel_{s,r,\e}{}'(0)=1$.   
Likewise we write $\cU_{s,r,\e}^\perp(t)$, $\cV_{s,r,\e}^\perp(t)$ for the
solutions of \eqref{jacobiperp} satisfying 
$\cU^\perp_{s,r,\e}(0)=1$, $\cU^\perp_{s,r,\e}{}'(0)=0$ and 
$\cV^\perp_{s,r,\e}(0)=0$, $\cV^\perp_{s,r,\e}{}'(0)=1$.  
And once again we suppress $(r,\e)$ when $(r,\e)=(\pi/4,0)$.  

Now we solve \eqref{jacobiparallel}, \eqref{jacobiperp} for 
$(r,\e)=(\pi/4,0)$, beginning with \eqref{jacobiparallel}.   
If $s\geq \pi/4$ then $K^\parallel=-1$, so  
\[
\cU^\prl_s(t)=\cosh(t),\qquad \cV^\prl_s(t)=\sinh(t)\qquad\qquad  
s\geq \pi/4.
\]
If $s<\pi/4$ then $K^\parallel(\r_s(t))$ has a jump discontinuity at
$|t|=\ell(s)$ and the solutions must be $C^1$ across the jump.  It is   
easily verified that 
\begin{equation}\label{Ztildeformula}
\cU^\prl_s(t)=  
\begin{cases}
\displaystyle \cos(t),  &|t|\leq \ell(s)\\
\cos(\ell(s))\cosh(|t|-\ell(s))-\sin(\ell(s))\sinh(|t|-\ell(s)),
&|t|> \ell(s) 
\end{cases},
\end{equation}
\begin{equation}\label{Vformula}
\cV^\prl_s(t)=
\begin{cases}
\displaystyle \sin(t),  &|t|\leq \ell(s)\\
\sign(t)\big(\sin(\ell(s))\cosh(|t|-\ell(s))+\cos(\ell(s))\sinh(|t|-\ell(s))\big), 
&|t|> \ell(s)  
\end{cases}.
\end{equation}

Recall that $\ell(0)=\pi/4$.  
So $\cU^\prl_0(t)=\frac{\sqrt{2}}{2}e^{-(|t|-\pi/4)}$ for $|t|>\pi/4$.  
The corresponding Jacobi field vanishes as $|t|\to \infty$.   
Hence $g$ has boundary conjugate points along radial geodesics.   

\begin{lemma}\label{122918gulliver}
Let $\g$ be a unit speed geodesic for $g_{\pi/4,0}$ contained in a
2-dimensional plane $\Sigma_\g$ through the origin.  
Any non-trivial Jacobi field $Y(t)$ normal to $\g$ and tangent to
$\Sigma_\g$ vanishes at most once.  
\end{lemma}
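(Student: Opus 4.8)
The plan is to reduce the lemma to a disconjugacy statement for the scalar Jacobi equation \eqref{jacobiparallel} with $(r,\e)=(\pi/4,0)$, namely that no non-trivial (weak) solution of it vanishes more than once, and to prove that disconjugacy by exhibiting a solution that is positive on all of $\R$. The reduction is immediate from the discussion preceding \eqref{jacobiparallel}: any Jacobi field $Y$ normal to $\g$ and tangent to $\Sigma_\g$ has the form $Y(t)=\cY(t)E(t)$ with $E$ a parallel unit normal field along $\g$ and $\cY$ a solution of \eqref{jacobiparallel}, and since $|Y(t)|_g=|\cY(t)|$ the zeros of $Y$ coincide with those of $\cY$.

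To prove the disconjugacy I would use the Sturmian separation principle: if \eqref{jacobiparallel} admits a solution $u$ with $u>0$ on $\R$, then between any two consecutive zeros of any solution there must lie a zero of $u$; since $u$ has none, no solution can have two zeros. In the $C^{1,1}$ case ($\e=0$) this argument goes through unchanged, because the only input is that the Wronskian $u\cY'-u'\cY$ of two solutions is constant on $\R$, which holds since it is continuous and constant on each interval where $K^{\|}_{\pi/4,0}\circ\r_s$ is smooth. The natural choice for $u$ is $\cU^{\prl}_s$ itself, so the whole problem comes down to verifying that $\cU^{\prl}_s>0$ on $\R$ for every $s\ge 0$.

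Establishing this positivity is the crux, and it is precisely where the special value $r=\pi/4$ is used. For $s\ge\pi/4$ there is nothing to check, since $\cU^{\prl}_s(t)=\cosh t$. For $0\le s<\pi/4$ I would use the explicit formula \eqref{Ztildeformula}: on $|t|\le\ell(s)$ one has $\cU^{\prl}_s(t)=\cos t>0$ because $\ell(s)\le\pi/4<\pi/2$, and for $|t|>\ell(s)$,
\[
\cU^{\prl}_s(t)=\cos(\ell(s))\Big(\cosh(|t|-\ell(s))-\tan(\ell(s))\sinh(|t|-\ell(s))\Big)\ge\cos(\ell(s))\,e^{-(|t|-\ell(s))}>0,
\]
the inequality using $0\le\tan(\ell(s))\le\tan(\pi/4)=1$ together with $\sinh(|t|-\ell(s))\ge 0$, and $\cos(\ell(s))>0$. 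The key point is the bound $\ell(s)\le\pi/4$, which by \eqref{ell} amounts to $\cos(\pi/4)/\cos(s)\ge\cos(\pi/4)$ and hence holds simply because $\cos s\le 1$; for a positively curved geodesic ball of radius larger than $\pi/4$ this would fail for $s$ near $0$, $\cU^{\prl}_s$ would acquire zeros, and interior conjugate points would appear. With $\cU^{\prl}_s>0$ on $\R$ for all $s\ge 0$ in hand, \eqref{jacobiparallel} is disconjugate on $\R$ and the lemma follows.
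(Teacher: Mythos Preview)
Your argument is correct and follows essentially the same approach as the paper: reduce to the scalar equation \eqref{jacobiparallel}, exhibit $\cU^{\prl}_s$ as a positive solution for every $s\ge 0$ using the explicit formula \eqref{Ztildeformula} together with $\ell(s)\le\pi/4$, and invoke the Sturm Separation Theorem (valid for weak solutions). The only cosmetic difference is that the paper phrases the key inequality as $\sin(\ell(s))\le\sqrt{2}/2\le\cos(\ell(s))$ whereas you use the equivalent $\tan(\ell(s))\le 1$.
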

\begin{proof}
We claim that for any $s\geq 0$, $\cU^\prl_s$ is a positive 
solution of \eqref{jacobiparallel}.  This is clear 
when $s\geq \pi/4$ where $\cU^\prl_s(t) = \cosh(t)$.  For $s<\pi/4$ it  
follows from \eqref{Ztildeformula} and the fact that 
$\sin(\ell(s))\leq \sqrt{2}/2 \leq \cos(\ell(s))$ (recall \eqref{ell}).  
Now the usual Sturm Separation Theorem is valid for an ODE of the
form $\cY''(t)+k(t)\cY(t)=0$, where $k$ is integrable and real
valued and the derivatives are interpreted in a weak sense (see,
e.g. comment in \cite{MR0069338}, p. 208). % 
Thus no non-trivial solution of \eqref{jacobiparallel} can vanish
twice.   
\end{proof}

Recall that $K_s(t)$ is identified in Lemmas~\ref{121618curvature} 
and \ref{012319curvature}.  We were astonished to find that 
the scalar Jacobi equations 
\begin{equation}\label{121618jacobi}
\cY''(t)+K_s(t)\cY(t)=0
\end{equation}
can be solved explicitly.  To do so,
note first that for all $t$ if $s\geq \pi/4$ and for $|t|>\ell(s)$  
if $s<\pi/4$, $K_s(t)$ has the form 
$K_s(t)=-1+f^{-4}(t)$, where $f''(t)-f(t)=0$.
Observing that for any $t_0$ such that $f(t_0)\neq 0$ one has
$\big(\frac{\sinh(t-t_0)}{f(t_0)f(t)}\big)'=f^{-2}(t)$, it is easy to check
that $\cY(t)=f(t)b\big(\frac{\sinh(t-t_0)}{f(t_0)f(t)}\big)$ with
$b''(x)+b(x)=0$ is the general solution of the equation  
$\cY''(t)+(-1+f^{-4}(t))\cY(t)=0$.  

For each $s>0$ we identify the solutions $\cU^\perp_s$ and $\cV^\perp_s$ of  
\eqref{121618jacobi}. 
For $s\geq \pi/4$ we take $t_0=0$ and obtain  
\begin{equation}\label{sbig}
\begin{aligned}
\cU^\perp_s(t)&=\cosh(t)\cos\big(\sqrt{2}e^{-s+\pi/4}\tanh(t)\big)\\
\cV^\perp_s(t)&=\frac{\sqrt{2}}{2}e^{s-\pi/4}\cosh(t)\sin\big(\sqrt{2}e^{-s+\pi/4}\tanh(t)\big). 
\end{aligned}
\end{equation}
For $0<s<\pi/4$ we take $t_0=\pm \ell(s)$ and obtain 
\begin{equation}\label{190519solutionz}
\cU^\perp_s(t)=
\begin{cases}
\displaystyle \cos(t),  &|t|\leq  \ell(s)\\
\frac{\sqrt{2}}{2}\csc(s)F(|t|,s)\cos(\Theta(|t|,s)) &|t|> \ell(s) 
\end{cases},\qquad 0< s<\pi/4,
\end{equation}
where $\displaystyle \Theta(t,s):=2 \sin (s) \sinh(t-\ell(s)) F^{-1}(t,s) +\arccos(\tan (s))$ 
and $F(t,s)$ is as in \eqref{F}.  
Note here that $\cos(x+\arccos(\tan (s)))$ is a solution of $b''(x)+b(x)=0$.
Also 
\begin{equation}\label{190519solution}
\cV^\perp_s(t)= 
\begin{cases}
\displaystyle \sin(t),  &|t|\leq \ell(s)\\
\sign(t)\frac{\sqrt{2}}{2}F(|t|,s)\sin(\Theta(|t|,s)) &|t|> \ell(s) 
\end{cases}, \quad 0< s<\pi/4.   
\end{equation}
We remark that these solutions extend smoothly to $s=0$ and 
$\cU^\perp_0=\cU^\prl_0$, $\cV^\perp_0=\cV^\prl_0$.  This is clear for
$\cV_s$, but for $\cU_s$ requires evaluating the indeterminant expression
appearing in \eqref{190519solutionz}.

\begin{lemma}\label{121618lemma}
Let $n\geq 2$ and $\g$ be a unit speed geodesic for $g_{\pi/4,0}$ contained
in a 2-dimensional plane $\Sigma_\g$ through the origin.  
Any nontrivial Jacobi field $Y(t)$ along $\g$ normal to $\Sigma_\g$ 
vanishes at most once. 
\end{lemma}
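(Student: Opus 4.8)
The plan is to reduce the statement, using the explicit solutions of this subsection, to the claim that the scalar Jacobi equation \eqref{121618jacobi} always possesses a solution that is positive on all of $\R$. First, since $n\ge2$, Lemma~\ref{032219lemma} together with Proposition~\ref{propositiontotallygeodesic} shows that a Jacobi field $Y\perp\Sigma_\g$ along $\g=\g_s$ (where $s\ge0$ is the distance from $\g$ to the origin; we may assume $\g=\g_s$) satisfies $D_t^2Y+K_s(t)Y=0$. Writing $Y=\sum_j\cY_jE_j$ in a parallel orthonormal frame $\{E_j\}$ of the normal bundle of $\Sigma_\g$ along $\g_s$ (this bundle is parallel since $\Sigma_\g$ is totally geodesic), each $\cY_j$ solves \eqref{121618jacobi}. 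So if some nontrivial $Y$ vanished at two times, every $\cY_j$ would vanish at those two times; by the Sturm Separation Theorem for weak solutions, used exactly as in the proof of Lemma~\ref{122918gulliver}, this is impossible as soon as \eqref{121618jacobi} admits a positive solution (such a $\cY_j$ would then vanish identically, hence $Y\equiv0$). Thus the lemma reduces to showing $\cU^\perp_s>0$ on $\R$ for every $s\ge0$.

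I would verify this case by case. For $s=0$, equations \eqref{jacobiparallel} and \eqref{jacobiperp} coincide, $\cU^\perp_0=\cU^\prl_0$, and positivity was shown in the proof of Lemma~\ref{122918gulliver}. For $s\ge\pi/4$, \eqref{sbig} gives $\cU^\perp_s(t)=\cosh(t)\cos\!\big(\sqrt2\,e^{-s+\pi/4}\tanh t\big)$, and positivity is immediate: $\cosh t>0$ and the argument of the cosine has modulus at most $\sqrt2<\pi/2$. The substantive case is $0<s<\pi/4$, where I use \eqref{190519solutionz}. For $|t|\le\ell(s)$ one has $\cU^\perp_s(t)=\cos t>0$ since $0\le\ell(s)<\pi/4$ by \eqref{ell}. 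For $|t|>\ell(s)$ the prefactor $\tfrac{\sqrt2}{2}\csc(s)F(|t|,s)$ is positive, so it remains to check $\cos\big(\Theta(|t|,s)\big)>0$. Writing $u=|t|-\ell(s)>0$, the quantity $\sinh(u)/F(|t|,s)=1/\!\big(\coth u+\sqrt{\cos(2s)}\big)$ is increasing in $u$ with supremum $1/\!\big(1+\sqrt{\cos(2s)}\big)$, so
\begin{equation}
0<\arccos(\tan s)\le\Theta(|t|,s)<\frac{2\sin s}{1+\sqrt{\cos(2s)}}+\arccos(\tan s)=:L(s),
\end{equation}
and we are reduced to proving $L(s)<\pi/2$ for $0<s<\pi/4$.

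To prove $L(s)<\pi/2$, I use $\arccos(\tan s)=\pi/2-\arcsin(\tan s)$ to rewrite it as
\begin{equation}
\frac{2\sin s}{1+\sqrt{\cos(2s)}}<\arcsin(\tan s),\qquad 0<s<\pi/4,
\end{equation}
and substitute $\phi:=\arcsin(\tan s)\in(0,\pi/2)$. From $\tan s=\sin\phi$ one computes $\sin s=\sin\phi/\sqrt{1+\sin^2\phi}$ and $\cos(2s)=\cos^2\phi/(1+\sin^2\phi)$, so the left-hand side equals $2\sin\phi/(\sqrt{1+\sin^2\phi}+\cos\phi)$. Since
\begin{equation}
\big(\sqrt{1+\sin^2\phi}+\cos\phi\big)^2=2+2\cos\phi\sqrt{1+\sin^2\phi}\ \ge\ 2+2\cos\phi=4\cos^2(\phi/2),
\end{equation}
we obtain
\begin{equation}
\frac{2\sin\phi}{\sqrt{1+\sin^2\phi}+\cos\phi}\ \le\ \frac{2\sin\phi}{2\cos(\phi/2)}=2\sin(\phi/2)<\phi=\arcsin(\tan s),
\end{equation}
which is exactly the desired inequality. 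Hence $\cU^\perp_s>0$ on $\R$ for all $s\ge0$, and the lemma follows.

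The main obstacle is this sharp inequality $L(s)<\pi/2$: it degenerates into an equality in the limit $s\to0$, reflecting the fact that along radial geodesics the solution $\cU^\perp_0$ tends to $0$ as $t\to\pm\infty$ --- a boundary conjugate point --- rather than vanishing at a finite time, which is precisely why $r=\pi/4$ is the critical value. It also forces the small nuisance of treating $s=0$ separately, since the formula \eqref{190519solutionz} degenerates there; everything else amounts to routine manipulation of the explicit formulas for $\r_s$, $K_s$ and the Jacobi fields established earlier in this subsection.
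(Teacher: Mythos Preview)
Your proof is correct and follows essentially the same strategy as the paper: reduce to the scalar equation, show $\cU^\perp_s>0$ for each $s\ge0$ via the explicit formulas, and conclude by Sturm separation. The cases $s=0$ and $s\ge\pi/4$ are handled identically.

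The one genuine difference is in the proof of the critical inequality $\Theta_\infty(s)<\pi/2$ for $0<s<\pi/4$ (your $L(s)$). The paper differentiates directly to obtain
\[
\p_s\Theta_\infty(s)=\frac{-2\sin^2 s}{\cos(s)\bigl(1+\sqrt{\cos(2s)}\bigr)^2}<0,
\]
and concludes from $\Theta_\infty(0)=\pi/2$. You instead substitute $\phi=\arcsin(\tan s)$ and give a clean elementary chain of inequalities. Your argument is self-contained and avoids the (admittedly routine) derivative computation. The paper's approach, however, has a payoff later: the explicit formula for $\p_s\Theta_\infty$ is reused in Proposition~\ref{prop_twice_differentiable} to compute the Taylor expansion $\Theta_\infty(s)=\pi/2-s^3/6+O(s^4)$, which in turn is needed for the smooth perturbation argument. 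So while your inequality proof is a nice alternative here, be aware that the derivative will still have to be computed somewhere if you continue to the smooth case.
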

\begin{proof}
For radial geodesics the proof of Lemma~\ref{122918gulliver} applies since
the equations for Jacobi fields tangent and normal to $\Sigma_\g$ coincide.   

We claim that $\cU^\perp_s$ is everywhere positive for any $s> 0$.  For
$s\geq \pi/4$ this is clear from \eqref{sbig} since 
$|\sqrt{2}e^{-s+\pi/4}\tanh(t)|\leq \sqrt{2}<\pi/2$.  
For $0<s<\pi/4$, according to \eqref{190519solutionz} it suffices to show 
that $0<\Theta(t,s)<\pi/2$ for $t\geq \ell(s)$.  It is easily verified
that for $0<s<\pi/4$ one has $\p_t\Theta(t,s)>0$ for $t\geq \ell(s)$.  So 
for each $s$, $\Theta(t,s)$ strictly increases from a minimum of 
$\arccos(\tan(s))$ at $t=\ell(s)$ to a limit of  
\begin{equation}\label{Thetainfinity}
\Theta_\infty(s):=\lim_{t\to \infty}\Theta(t,s)=
\arccos(\tan(s))+\frac{2\sin(s)}{1+\sqrt{\cos(2s)}}.
\end{equation}
A straightforward calculation shows that 
\begin{equation}\label{derivTheta}
\p_s\Theta_\infty(s)=\frac{-2\sin^2s}
{\cos(s)\left(1+\sqrt{\cos(2s)}\right)^2}
\qquad 0\leq s<\pi/4. 
\end{equation}
So $\Theta_\infty(s)$ strictly decreases from a maximum of $\pi/2$ at $s=0$ to a
minimum of $\sqrt{2}$ at $s=\pi/4$.  Thus
$0<\Theta(t,s)<\pi/2$ for $0<s<\pi/4$ and $t\geq\ell(s)$.   

Once again the result now follows from the Sturm Separation Theorem.
\end{proof}

\begin{proof}[Proof of Theorem \ref{010319maintheorem}, $C^{1,1}$ metric]
We have already noted that $g=g_{\pi/4,0}$ is non-trapping and has 
boundary conjugate points along radial geodesics.  If $Y$ is a normal
Jacobi field along a unit speed geodesic $\g\subset \Sigma_\g$, write  
$Y=Y_1+Y_2$ with $Y_1$ tangent to $\Sigma_\g$ and $Y_2$ normal to it, as in
Proposition \ref{propositiontotallygeodesic}.  If $Y$ vanishes twice, so do
$Y_1$ and $Y_2$.  Lemmas \ref{122918gulliver} and
\ref{121618lemma} imply that both $Y_1$ and $Y_2$ vanish identically, so 
$g$ has no interior conjugate points.  
\end{proof}

\section{Smooth Perturbation}\label{smoothingsection}

In this section we show that we can find $(r,\e)$ near $(\pi/4,0)$ with
$\e>0$ so that $g_{r,\e}$ has no interior conjugate points but has boundary
conjugate points along radial geodesics, thus proving
Theorem~\ref{010319maintheorem}.  First we outline the argument.  Our  
analysis will focus on the decaying (also called stable) solutions of the  
Jacobi equations \eqref{jacobiparallel}, \eqref{jacobiperp}.  As we
argue below, since   
$K^\prl_{r,\e}(\rho_{s,r,\e}(t))$ and $K_{s,r,\e}(t)$ are asymptotic to
$-1$ as $t\to\infty$, there are unique solutions $\cY_{s,r,\e}^\prl(t)$, 
$\cY_{s,r,\e}^\perp(t)$ to 
\eqref{jacobiparallel}, \eqref{jacobiperp}, resp., such that  
$\lim_{t\to \infty}e^t\cY_{s,r,\e}^\prl(t)=1$, 
$\lim_{t\to \infty}e^t\cY_{s,r,\e}^\perp(t)=1$.  
For $K^\prl_{r,\e}(\rho_{s,r,\e}(t))$ this is clear 
since $K^\prl_{r,\e}(\r)=-1$ for $\r$ large.
Of course for $s=0$ we have $\cY_{0,r,\e}^\prl=\cY_{0,r,\e}^\perp$ since
$K^\prl_{r,\e}\circ\rho_{0,r,\e}=K_{0,r,\e}$.   
We will show that $\cY_{0,r,\e}^\prl(0)\neq 0$ for $(r,\e)$ sufficiently
near $(\pi/4,0)$.  
If $\cY_{0,r,\e}^\prl{}'(0)=0$ and $E(t)$ is a non-zero parallel vector
field along $\g_{0,r,\e}$, then $\cY_{0,r,\e}^\prl(|t|)E(t)$ is a 
nontrivial Jacobi field 
which decays as $t\to \pm \infty$.  The corresponding metric $g_{r,\e}$
therefore has boundary conjugate points along radial geodesics. 
We will prove 

\begin{proposition}\label{exist}
Any neighborhood of $(\pi/4,0)$ contains a point $(r,\e)$ with $\e>0$ so
that $\cY_{0,r,\e}^\prl{}'(0)=0$. 
\end{proposition}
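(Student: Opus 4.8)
The plan is to study the single real-valued function $\phi(r,\e):=\cY_{0,r,\e}^\prl{}'(0)$, the derivative at $t=0$ of the decaying (stable) solution of the radial Jacobi equation. At $(r,\e)=(\pi/4,0)$ one has $\phi(\pi/4,0)=0$ (equivalently, $\cU_0^\prl$ is even). I would show: (i) $\phi$ is continuous on a neighborhood of $(\pi/4,0)$ inside $\{\e\geq 0\}$; and (ii) $\phi(r,0)$ admits an explicit closed form which strictly changes sign as $r$ crosses $\pi/4$. Then the intermediate value theorem produces, for every small $\e>0$, an $r$ close to $\pi/4$ with $\phi(r,\e)=0$, which is exactly the assertion.

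For (i), recall that along a radial geodesic $\rho_{0,r,\e}(t)=t$, so \eqref{jacobiparallel} reads $\cY''+K_{r,\e}^\prl(t)\cY=0$. Fix $L\in(\pi/4,\pi/2)$ and restrict to $(r,\e)$ with $r+\e<L$ and $r$ near $\pi/4$. Since $K_{r,\e}^\prl(\rho)=-1$ for $\rho\geq r+\e$ by \eqref{eq3}, the only solution on $[r+\e,\infty)$ with $\lim_{t\to\infty}e^t\cY(t)=1$ is $\cY(t)=e^{-t}$; hence on $[0,L]$ the stable solution $\cY_{0,r,\e}^\prl$ is the solution of $\cY''+K_{r,\e}^\prl(t)\cY=0$ with the \emph{fixed} Cauchy data $\cY(L)=e^{-L}$, $\cY'(L)=-e^{-L}$. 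Now $K_{r,\e}^\prl$ and $K_{\pi/4,0}^\prl$ both take values in $[-1,1]$ and agree outside an interval around $\rho=\pi/4$ of length at most $|r-\pi/4|+\e$, so $\|K_{r,\e}^\prl-K_{\pi/4,0}^\prl\|_{L^1([0,L])}\to 0$ as $(r,\e)\to(\pi/4,0)$. By the standard continuous dependence of solutions of a linear second order ODE on an $L^1$ coefficient (Gronwall), $\cY_{0,r,\e}^\prl\to\cY_{0,\pi/4,0}^\prl$ in $C^1([0,L])$, so in particular $\phi$ is continuous at $(\pi/4,0)$. Continuity of $\phi$ at nearby points with $\e>0$ (smooth coefficients) and at nearby points with $\e=0$ (the same $L^1$ argument centered at the relevant $r$) follows the same way, so $\phi$ is continuous on a full neighborhood. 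The same convergence also yields $\cY_{0,r,\e}^\prl(0)\neq 0$ near $(\pi/4,0)$, since $\cY_{0,\pi/4,0}^\prl(0)\neq 0$ by the formula for $\cU_0^\prl$.

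For (ii), when $\e=0$ we have $\cY_{0,r,0}^\prl(t)=e^{-t}$ on $(r,\infty)$, and $C^1$-matching at $t=r$ with $\cY_{0,r,0}^\prl(t)=A\cos t+B\sin t$ on $[0,r]$ gives $A\cos r+B\sin r=e^{-r}$ and $-A\sin r+B\cos r=-e^{-r}$, whence $B=e^{-r}(\sin r-\cos r)$ and
\[
\phi(r,0)=\cY_{0,r,0}^\prl{}'(0)=e^{-r}(\sin r-\cos r).
\]
This vanishes at $r=\pi/4$ and, since $(\sin r-\cos r)'=\sin r+\cos r>0$ near $\pi/4$, it is strictly negative for $r$ slightly below $\pi/4$ and strictly positive for $r$ slightly above.

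To conclude, fix a small $\delta>0$ and put $r_\pm=\pi/4\pm\delta$, so $\phi(r_-,0)<0<\phi(r_+,0)$. By continuity of $\phi$ at $(r_\pm,0)$ there is $\e_\delta>0$ so that $\phi(r_-,\e)<0$ and $\phi(r_+,\e)>0$ for all $0<\e<\e_\delta$; for such $\e$, continuity of $r\mapsto\phi(r,\e)$ on $[r_-,r_+]$ and the intermediate value theorem give $r_\e\in(r_-,r_+)$ with $\phi(r_\e,\e)=0$. Since $\delta$, and then $\e_\delta$, can be taken arbitrarily small, every neighborhood of $(\pi/4,0)$ contains such a point $(r_\e,\e)$ with $\e>0$. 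The one genuinely delicate ingredient here is the continuity of $\phi$ across $\e=0$, i.e. the stability of the decaying solution as the mollified step $K_{r,\e}^\prl$ degenerates to the true step function; the rest is the explicit solution of a piecewise-constant-coefficient ODE together with the intermediate value theorem.
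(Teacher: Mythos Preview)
Your proof is correct and follows essentially the same approach as the paper: compute $\cY_{0,r,0}^\prl{}'(0)=e^{-r}(\sin r-\cos r)$ explicitly, observe the sign change at $r=\pi/4$, and use continuity in $(r,\e)$ together with the intermediate value theorem. The only difference is cosmetic: the paper packages the continuity of $(r,\e)\mapsto\cY_{0,r,\e}^\prl{}'(0)$ into its earlier Proposition~\ref{prop:second_derivatives}, whereas you give a direct self-contained $L^1$/Gronwall argument exploiting that $K_{r,\e}^\prl\equiv-1$ beyond $r+\e$, which is a nice simplification for this particular statement.
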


\noindent
It then remains to show that the corresponding metric $g_{r,\e}$ 
has no interior conjugate points.  We will do this via the
following two propositions.

\begin{proposition}\label{noicpssmall}
There exist a neighborhood $U$ of $(\pi/4,0)$ and $\sigma>0$ such that if   
$(r,\e)\in U$ and $\cY_{0,r,\e}^\prl{}'(0)=0$, then $g_{r,\e}$ has no
interior conjugate points along any geodesic $\g_{s,r,\e}$ with  
$0\leq s\leq \sigma$.  
\end{proposition}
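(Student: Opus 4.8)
The plan is to reduce to the two scalar Jacobi equations, and then for each geodesic $\g_{s,r,\e}$ with $0\le s\le\sigma$ to exhibit a solution of each that never vanishes; by the weak form of the Sturm separation theorem (as already used in Lemmas~\ref{122918gulliver} and \ref{121618lemma}) this prevents any nontrivial solution from vanishing twice. By Proposition~\ref{propositiontotallygeodesic} a normal Jacobi field along $\g_{s,r,\e}$ splits into a part tangent to $\Sigma_{\g_{s,r,\e}}$, governed by \eqref{jacobiparallel}, and a part normal to it, governed by \eqref{jacobiperp}, and if it vanishes twice so do both parts. So I would aim to find a neighborhood $U$ of $(\pi/4,0)$ and $\sigma>0$ such that for $(r,\e)\in U$ with $\cY_{0,r,\e}^\parallel{}'(0)=0$ and $0\le s\le\sigma$, both \eqref{jacobiparallel} and \eqref{jacobiperp} are disconjugate on $\R$ along $\g_{s,r,\e}$.

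Two structural features of the coefficients will be used throughout. First, differentiating $(\cA_{r,\e}')^2+\cA_{r,\e}^2$ and using \eqref{Aeq} together with $K^\parallel_{r,\e}\le1$ shows this quantity is nondecreasing from the value $1$ at $\rho=0$, whence $K^\perp_{r,\e}\le1$, and by \eqref{KY} also $K_{s,r,\e}\le1$; by Sturm comparison with $\cY''+\cY=0$ any two consecutive zeros of a solution of \eqref{jacobiparallel} or \eqref{jacobiperp} are at distance at least $\pi$. Second, once $\rho_{s,r,\e}(t)>r+\e$ one has $K^\parallel_{r,\e}=-1$, so by \eqref{KY} and $K^\perp_{r,\e}\le1$ we get $K_{s,r,\e}(t)\le1-2\rho_{s,r,\e}'(t)^2$; since $\rho_{s,r,\e}$ is increasing for $t>0$ with $\rho_{s,r,\e}'(\ell_r(s))^2=1-\sin^2s/\sin^2r$ by \eqref{rhoprime}, for $(r,\e)$ near $(\pi/4,0)$ and $s$ small one has $\rho_{s,r,\e}'(t)^2>1/2$ there, hence $K_{s,r,\e}(t)<0$. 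As a scalar equation with nonpositive coefficient is disconjugate (a positive solution vanishing at two points would be convex with zero endpoint values), if a solution of either equation had zeros $t_1<t_2$ then necessarily $t_1<\tau_*$ and $t_2>-\tau_*$, where $\tau_*=\tau_*(s,r,\e)$ is the time at which $\rho_{s,r,\e}$ reaches $r+\e$; this $\tau_*$ stays bounded and below $\pi/2$ for $(s,r,\e)$ near $(0,\pi/4,0)$, so any hypothetical pair of zeros meets the compact region $\{|t|\le\tau_*\}$.

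For $s=0$ the two equations coincide, and the hypothesis $\cY_{0,r,\e}^\parallel{}'(0)=0$ forces $\cY_{0,r,\e}^\parallel$ to be even because $K^\parallel_{r,\e}\circ\rho_{0,r,\e}$ is even in $t$. Being even and decaying as $t\to+\infty$, it also decays as $t\to-\infty$ and equals $e^{-|t|}$ for $|t|\ge\tau_*$. Solving the equation backward from a fixed large time, with the data fixed by this normalization (and hence independent of $(r,\e)$) and with coefficients converging in $L^1_{\mathrm{loc}}$ as $(r,\e)\to(\pi/4,0)$, shows $\cY_{0,r,\e}^\parallel$ converges in $C^1_{\mathrm{loc}}$ to a positive multiple of $\cU_0^\parallel$, which is strictly positive on every compact set. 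So for $(r,\e)$ close enough to $(\pi/4,0)$ this even solution is positive on all of $\R$, and both scalar equations are disconjugate along $\g_{0,r,\e}$.

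For $0<s\le\sigma$ the plan is again to produce, for each equation, a solution positive on all of $\R$: for the $C^{1,1}$ metric one uses the even solutions $\cU_s^\parallel$, $\cU_s^\perp$, positive on $\R$ by Lemmas~\ref{122918gulliver} and \ref{121618lemma}, and for $g_{r,\e}$ their smooth counterparts (equivalently, as organized by the criterion of Proposition~\ref{123018proposition}, the stable solutions). On a fixed compact interval $[-T_0,T_0]$, with $T_0$ at least $\tau_*$ for all parameters in play, continuous dependence on $(s,r,\e)$ including the $C^{1,1}$ limit (with $L^1_{\mathrm{loc}}$-convergent coefficients) keeps this solution $C^1_{\mathrm{loc}}$-close to its value for $g_{\pi/4,0}$, hence positive on $[-T_0,T_0]$; outside $[-T_0,T_0]$ the coefficient is nonpositive, so a positive solution there is convex and, when its derivative at $\pm T_0$ has the favorable sign, positivity is maintained. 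The difficulty is that for $s$ near $0$ the derivative at $\pm T_0$ points the wrong way, and whether the solution stays positive far out is governed by a small coefficient controlling its growth as $t\to\pm\infty$, a coefficient whose sign—not just its nearness to the $C^{1,1}$ value—must be controlled and which can flip under the smoothing. Pinning this down uniformly for $s\in[0,\sigma]$ is the main obstacle, and it is exactly where Proposition~\ref{123018proposition} and the estimates on the derivatives of the stable solutions in $(s,r,\e)$ (two parameter derivatives and one time derivative) are needed. Granting this, \eqref{jacobiparallel} and \eqref{jacobiperp} are disconjugate along every $\g_{s,r,\e}$ with $0\le s\le\sigma$ for $(r,\e)$ in a neighborhood $U$ of $(\pi/4,0)$, and together with the $s=0$ case and Proposition~\ref{propositiontotallygeodesic} this gives that $g_{r,\e}$ has no interior conjugate points along these geodesics, as claimed.
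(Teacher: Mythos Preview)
Your overall architecture is the same as the paper's: reduce via Proposition~\ref{propositiontotallygeodesic} to the two scalar equations \eqref{jacobiparallel}, \eqref{jacobiperp}, and for each produce a positive solution so that Sturm separation rules out interior conjugate points. You also correctly identify the crux: for $s$ near $0$, the even solution $\cU_\mu$ may fail to stay positive as $|t|\to\infty$, and what matters is the \emph{sign} of a small coefficient (equivalently, of $\cW_\mu'(0)$), which is not controlled merely by $C^1$ closeness to the $C^{1,1}$ case.

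The gap is that you then write ``Granting this'' and stop. You never supply the argument that pins down this sign, and the preceding compactness/convexity discussion, while correct, does not do it either. The paper closes the gap by a second-order Taylor expansion in $s$ of $\cW_\mu'(0)$, using Proposition~\ref{prop_twice_differentiable}: the hypothesis $\cY_{0,r,\e}^\parallel{}'(0)=0$ gives $\cW_{0,r,\e}^\parallel{}'(0)=\cW_{0,r,\e}^\perp{}'(0)=0$; the first $s$-derivative $\p_s\cW_\mu'(0)\big|_{s=0}$ vanishes; and $\p_s^2\cW_\mu'(0)<0$ holds uniformly for $(r,\e)$ in a neighborhood $U$ of $(\pi/4,0)$ and $0\le s\le\sigma$ (this is exactly where the continuity of two $s$-derivatives and one $t$-derivative of the stable solution, from Proposition~\ref{prop:second_derivatives}, is used). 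Taylor's theorem with integral remainder then gives $\cW_\mu'(0)\le 0$ for all $0\le s\le\sigma$ and $(r,\e)\in U$. Since the coefficients $K^\parallel_\nu\circ\rho_\mu$ and $K_\mu$ are even in $t$ (because $\rho_\mu$ is), Proposition~\ref{123018proposition} applies and yields disconjugacy of both scalar equations along each $\g_{s,r,\e}$. That is the missing step you need to insert in place of ``Granting this''.
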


\begin{proposition}\label{noicpslarge}
For every $\sigma>0$, there exists a neighborhood $V$ of $(\pi/4,0)$ so
that if $(r,\e)\in V$, then $g_{r,\e}$ has no interior conjugate points
along any geodesic $\g_{s,r,\e}$ with $s\geq \sigma$.  
\end{proposition}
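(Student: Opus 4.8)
The plan is to exploit the fact that for $s\geq \sigma$ bounded away from $0$, the curvature functions $K_{s,r,\e}(t)$ and $K^\parallel_{r,\e}(\rho_{s,r,\e}(t))$ are controlled uniformly, and in fact are close (in a suitable sense) to the corresponding curvatures for the explicitly solved $C^{1,1}$ metric $g_{\pi/4,0}$, for which Lemmas \ref{122918gulliver} and \ref{121618lemma} already rule out interior conjugate points. More precisely, I would first observe that for $s\geq\sigma$ the geodesic $\gamma_{s,r,\e}$ either never enters the spherical cap $B_r(0)$ (if $s\geq r$) or enters it only for $|t|\leq \ell_r(s)$ with $\ell_r(s)$ uniformly bounded; and by the explicit solution formula \eqref{rhosolution} together with the comparison $\rho_{s,r,\e}\geq\bar\rho$, the function $\rho_{s,r,\e}(t)$ grows linearly, so $K_{s,r,\e}(t)\to -1$ exponentially fast, uniformly in $s\geq\sigma$ and in $(r,\e)$ near $(\pi/4,0)$. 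The key quantitative input is a lower bound: I want to show that for such $s$ the curvature integral $\int_{\R}(K_{s,r,\e}(t)+1)^+\,dt$, or more precisely the relevant comparison quantity controlling oscillation of solutions, is small enough that no solution of \eqref{jacobiparallel} or \eqref{jacobiperp} can vanish twice.

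The cleanest route is via a Sturm-type comparison. For $s\geq \pi/4$ (or $s\geq r$), $K^\parallel=-1$ identically, so \eqref{jacobiparallel} has the nonvanishing solution $\cosh(t)$ and we are done for the tangent Jacobi fields exactly as in Lemma \ref{122918gulliver}. For the normal Jacobi fields governed by \eqref{jacobiperp}, I would use the explicit structure $K_{s,r,\e}(t)=-1+f^{-4}(t)$-type behavior for $|t|$ large (for $\e=0$ this is exact, by Lemmas \ref{122818lemma}, \ref{012319lemma}; for $\e>0$ one gets it up to an error supported near $|t|=\ell_r(s)$ that tends to $0$ as $\e\to0$). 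Then the solution $\cU^\perp_{s,r,\e}$ is, to leading order, $f(t)\cos\Theta(t)$ with $\Theta$ having total variation bounded by a multiple of $\sup_t |K_{s,r,\e}(t)+1|$ times a fixed length scale; since for $s\geq\sigma$ the bump $K_{s,r,\e}+1$ has both height and effective width uniformly bounded, $\Theta$ stays in $(0,\pi/2)$ for $(r,\e)$ close enough to $(\pi/4,0)$, hence $\cU^\perp_{s,r,\e}>0$ everywhere and the Sturm Separation Theorem finishes the argument. The case $\sigma\leq s<\pi/4$ (when $\pi/4-\sigma>0$, so this range is nonempty) is handled the same way, now comparing directly with the explicit formula \eqref{190519solutionz}–\eqref{Thetainfinity}: there $\Theta_\infty(s)$ is strictly less than $\pi/2$ for $s>0$, so the conclusion $0<\Theta<\pi/2$ is stable under small perturbation of the metric uniformly for $s\in[\sigma,\pi/4]$.

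The main obstacle is making "uniformly in $s$" precise and honest: one must show that as $s\to\infty$ the estimates do not degenerate. Here the point is that larger $s$ is better — by \eqref{012319curvature} the height of the curvature bump is $2e^{-2s+\pi/2}\to 0$, and for $\e>0$ the analogous bound holds with a uniform constant — so the positivity of $\cU^\perp_{s,r,\e}$ actually improves with $s$, and it suffices to combine a uniform estimate on the compact parameter range $s\in[\sigma,S_0]$ with the decay estimate for $s\geq S_0$. Assembling this requires a continuous-dependence statement for the stable solutions $\cY^\perp_{s,r,\e}$ on $(s,r,\e)$ down to $\e=0$, which is exactly the kind of technical ODE analysis the introduction promises is carried out later; I would invoke that here rather than reprove it. Once positivity of the stable solution (equivalently, of $\cU^\perp$, up to normalization) is established on all of $\{s\geq\sigma\}\times V$, absence of interior conjugate points along every $\gamma_{s,r,\e}$ with $s\geq\sigma$ follows by the decomposition of Proposition \ref{propositiontotallygeodesic} together with Sturm separation, precisely as in the proof of Theorem \ref{010319maintheorem} for the $C^{1,1}$ metric.
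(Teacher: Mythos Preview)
Your approach is essentially the paper's: split $s\geq\sigma$ into a tail $s\geq S$ and a compact range $[\sigma,S]$, dispatch the tail by a curvature argument, handle the compact range by continuity in $(r,\e)$ of the even solution $\cU^\perp_\mu$ (positive at $(\pi/4,0)$ by Lemma~\ref{121618lemma}), then invoke Sturm separation and Proposition~\ref{propositiontotallygeodesic}. Two remarks on execution. First, the paper's large-$s$ step is cleaner than your bump-height estimate: one shows directly (as in Lemma~\ref{Knegative}) that there is $\rho_0$ with $K^\|_\nu(\rho)<0$ and $K^\perp_\nu(\rho)<0$ for all $\rho\geq\rho_0$ and all $(r,\e)\in\cI\times[0,\e_0]$, so for $s\geq\rho_0$ both scalar Jacobi equations have strictly negative potential along the entire geodesic and Sturm comparison with $Y''=0$ finishes it; no $\Theta$-analysis or decay rate is needed. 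Second, in your last paragraph you conflate the stable solution $\cY^\perp_\mu$ with the even solution $\cU^\perp_\mu$; for $s>0$ these are \emph{not} proportional (indeed $\cU^\perp_\mu$ grows exponentially since $\Theta_\infty(s)<\pi/2$). The positivity you actually want is that of $\cU^\perp_\mu$ and $\cU^\prl_\mu$, and the continuity input required is Lemma~\ref{Xcontinuous}, not the harder stable-solution continuity of Proposition~\ref{prop:second_derivatives}.
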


\noindent 
Theorem~\ref{010319maintheorem} reduces to these three propositions:
\begin{proof}[Proof of Theorem \ref{010319maintheorem}.]
Choose $U$ and $\sigma$ as in Proposition~\ref{noicpssmall}.  Then choose
$V$ as in Proposition~\ref{noicpslarge} corresponding to this $\sigma$.  
Proposition~\ref{exist} asserts 
that there is $(r,\e)\in U\cap V$ with $\e>0$ so that
$\cY_{0,r,\e}^\prl{}'(0)=0$.  The metric $g_{r,\e}$ then has boundary
conjugate points but no interior conjugate points, and, as before, it is non-trapping.   
\end{proof}

Note that by successively shrinking the neighborhoods, one obtains
a sequence of metrics $g_{r_j,\e_j}$ with $\e_j>0$ and 
$(r_j,\e_j)\to (\pi/4,0)$ such that each $g_{r_j,\e_j}$ has boundary
conjugate points but no interior conjugate points.  The proof actually
shows that for each $\e$ sufficiently small, there is $r_\e$ 
so that $g_{r_\e,\e}$ has boundary conjugate points but no interior
conjugate points. 

Continuity as $\e\to 0$ of solutions of  
\eqref{jacobiparallel}, \eqref{jacobiperp} and of various of their
derivatives in $s$ and $t$ are essential to the proofs of  
Propositions~\ref{exist}, \ref{noicpssmall}, \ref{noicpslarge}.
This is a singular limit, as the functions 
$K^\prl_{r,\e}(\rho_{s,r,\e}(t))$ and $K_{s,r,\e}(t)$ develop jump
singularities as $\e\to 0$.  
We have had to do quite a bit of work to prove
the necessary continuity properties.  
We present this continuity analysis next and afterwards return to        
the proofs of  Propositions~\ref{exist}, \ref{noicpssmall},
\ref{noicpslarge}.  We begin the analysis by formulating some general
results on ODE: Propositions \ref{smooth}-\ref{gronwallnonlinear},    
that we will apply in our setting.

Let $\cF:\R^d\to\R^d$ be a vector field.  Suppose that $\cF$ is continuous and 
piecewise $C^\infty$:  there is a smooth hypersurface $S\subset \R^d$  
locally dividing $\R^d$ into two open subsets $\cU_+$, $\cU_-$ and two 
smooth vector fields $\cF_+$, $\cF_-$ on $\R^d$ so that $\cF=\cF_\pm$ on $\cU_\pm$
and $\cF_+=\cF_-$ on $S$.  Consider the integral curves of $\cF$, which solve the  
ODE
\[
x'(t)=\cF(x(t)),\qquad x(0)=s.
\]
Since $\cF$ is Lipschitz, for each $s$ there exists a unique solution
$x(s,t)$, and $x(s,t)$ and $x'(s,t)$ are jointly continuous.  
We assume
below that the solutions exist on all the time intervals considered. 

\begin{proposition}\label{smooth}
Suppose that $(s_0,t_0)$ has the property that 
$x(s_0,0)$ and $x(s_0,t_0)$ are on opposite sides of $S$, and 
the curve $t\mapsto x(s_0,t)$, $0< t < t_0$,  
crosses $S$ exactly once and does so transversely.  There is a smooth
function $T(s)$ defined for $s$ in a neighborhood $\cV$ of $s_0$ such that
$0<T(s)<t_0$ and $x(s,T(s))\in S$.  The restrictions of $x(s,t)$ to the two 
sets $\{(s,t):s\in \cV, \,0\leq t\leq T(s)\}$ and  
$\{(s,t):s\in \cV,\, T(s)\leq t\leq t_0\}$ are $C^\infty$.  
\end{proposition}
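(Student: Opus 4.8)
The plan is to reduce the statement to smooth dependence on initial conditions for the two \emph{smooth} vector fields $\cF_\pm$ taken separately. The point is that although the integral curve $x(s,t)$ of $\cF$ is only Lipschitz in $s$ a priori, on each side of the single crossing it agrees — by uniqueness of solutions of the Lipschitz ODE — with the flow of a smooth vector field, which \emph{does} depend smoothly on parameters. Let $t_*\in(0,t_0)$ be the crossing time and $p_*:=x(s_0,t_*)\in S$. I would work in a neighborhood of $p_*$ with a smooth defining function $\phi$ for $S$, so that $S=\{\phi=0\}$ and (say) $\cU_\pm=\{\pm\phi>0\}$ near $p_*$, with $x(s_0,0)$ on the $\cU_-$ side. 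Write $\Phi_-$, $\Phi_+$ for the flows of $\cF_-$, $\cF_+$; these are $C^\infty$ where defined. Note that $\cF=\cF_\pm$ on $\o{\cU_\pm}$: indeed $\cF=\cF_\pm$ on the open set $\cU_\pm$, hence on its closure by continuity, and on $S$ one has $\cF=\cF_-=\cF_+$. Since $x(s_0,\cdot)$ meets $S$ only at $t=t_*$, it lies in $\o{\cU_-}$ for $t\in[0,t_*]$, where $\cF=\cF_-$, so by uniqueness $x(s_0,t)=\Phi_-(s_0,t)$ there; likewise $x(s_0,t_*+\tau)=\Phi_+(p_*,\tau)$ for $\tau\in[0,t_0-t_*]$.

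Next I would produce $T(s)$ by applying the implicit function theorem to the \emph{smooth} function $h(s,t):=\phi(\Phi_-(s,t))$. One has $h(s_0,t_*)=\phi(p_*)=0$ and $\partial_t h(s_0,t_*)=d\phi_{p_*}\big(\cF_-(p_*)\big)$, which is nonzero — indeed positive with the above sign convention — precisely because the crossing is transverse. The implicit function theorem then yields a $C^\infty$ function $T(s)$ near $s_0$ with $T(s_0)=t_*$ and $\Phi_-(s,T(s))\in S$; shrinking the neighborhood and using continuity of $T$ we may assume $0<T(s)<t_0$.

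The main point is then to identify $x$ with smooth flows on the two regions: I claim that, after shrinking to a neighborhood $\cV$ of $s_0$, one has $x(s,t)=\Phi_-(s,t)$ for $0\le t\le T(s)$ and $x(s,t)=\Phi_+\big(\Phi_-(s,T(s)),\,t-T(s)\big)$ for $T(s)\le t\le t_0$. Granting this, both restrictions of $x$ are restrictions of compositions of the smooth maps $\Phi_\pm$ and $T$, hence $C^\infty$, and the proposition follows. By the uniqueness argument above, the claim reduces to checking that $\Phi_-(s,\cdot)$ stays in $\o{\cU_-}$ on $[0,T(s)]$ and that $\Phi_+(\Phi_-(s,T(s)),\cdot)$ stays in $\o{\cU_+}$ on $[0,t_0-T(s)]$. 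For the first: by transversality $\partial_t h>0$ on a neighborhood of $(s_0,t_*)$, so there are $\delta>0$ and a neighborhood of $s_0$ on which $h(s,\cdot)$ is strictly increasing on $[t_*-\delta,t_*+\delta]$, whence $h(s,t)<h(s,T(s))=0$ for $t\in[t_*-\delta,T(s))$ (using $T(s)\in(t_*-\delta,t_*+\delta)$ by continuity of $T$); and on the compact interval $[0,t_*-\delta]$ one has $h(s_0,\cdot)\le -c<0$ for some $c>0$, hence $h(s,\cdot)<0$ there for $s$ near $s_0$. Thus $\Phi_-(s,[0,T(s)))\subset\cU_-$ and $\Phi_-(s,T(s))\in S$. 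The second part is symmetric, applied to $\psi(s,\tau):=\phi\big(\Phi_+(\Phi_-(s,T(s)),\tau)\big)$, which satisfies $\psi(s,0)=0$ since $\Phi_-(s,T(s))\in S$, and $\partial_\tau\psi(s_0,0)=d\phi_{p_*}(\cF_+(p_*))>0$.

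The hard part is entirely the low regularity in $s$: one cannot differentiate $x(s,t)$ in $s$, so the argument must be arranged so that the implicit function theorem and smooth parameter dependence are invoked only for the auxiliary smooth flows $\Phi_\pm$, and the conclusion is transported back to $x$ solely through uniqueness of Lipschitz ODE solutions. The only other point requiring care — and it is genuinely just book-keeping — is verifying that these smooth flows remain on the correct side of $S$ for $s$ near $s_0$, which is exactly what the compactness and transversality estimates on $h$ and $\psi$ accomplish.
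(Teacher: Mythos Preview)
Your proof is correct and follows essentially the same route as the paper's: identify $x$ with the flow of the smooth field $\cF_-$ before the crossing, use transversality (implicit function theorem) to obtain a smooth crossing time $T(s)$, then identify $x$ with the flow of $\cF_+$ started from the smooth data $x(s,T(s))$. Your version is more explicit than the paper's in justifying, via uniqueness for the Lipschitz ODE and the compactness/transversality estimates on $h$ and $\psi$, that the smooth auxiliary flows stay on the correct side of $S$ for $s$ near $s_0$; the paper simply asserts this.
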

\begin{proof}
Suppose $x(s_0,0)\in \cU_-$ and $x(s_0,t_0)\in \cU_+$.  The curve 
$t\mapsto x(s_0,t)$ is an integral curve of $\cF_-$ up until the time that 
$x(s_0,t)\in S$.  Since $\cF_-$ is smooth, its integral curves are smooth
functions of $(s,t)$.  Since the crossing is transverse, 
there is a unique smooth function $T(s)$ defined for $s$ near $s_0$ by the   
condition that $x(s,T(s))\in S$.  The map $s\mapsto x(s,T(s))$ is
smooth from a neighborhood of $s_0$ to $S$, and $x(s,t)$ is smooth for
$t\leq T(s)$.  For $s$ near $s_0$, the curve 
$t\mapsto x(s,t)$, $t\geq T(s)$ is an integral curve of $\cF_+$ whose initial 
point $x(s,T(s))$ depends smoothly on $s$.  By smoothness of the integral 
curves of $\cF_+$, it follows that $x(s,t)$ is smooth for $t\geq T(s)$.  
\end{proof}

\begin{proposition}\label{c1}
In the setting of Proposition~\ref{smooth}, $x(s,t)$ is jointly $C^1$ 
in a neighborhood of $(s_0,T(s_0))$.    
\end{proposition}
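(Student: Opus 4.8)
The plan is to deduce the claim from Proposition~\ref{smooth} by a gluing argument across the hypersurface $\Sigma=\{t=T(s)\}$. Proposition~\ref{smooth} already provides that $T$ is smooth and that $x$ is $C^\infty$ on each of the two closed regions $R_-=\{(s,t):0\le t\le T(s)\}$ and $R_+=\{(s,t):T(s)\le t\le t_0\}$ near $(s_0,T(s_0))$, so the only thing at issue is $C^1$ matching along $\Sigma$. Since $\partial_t x=\cF(x)$ and both $\cF$ and $x$ are continuous, $\partial_t x$ is continuous throughout; in effect only the $s$-derivatives need to be reconciled.

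First I would straighten $\Sigma$. Using that $T$ is smooth, the map $\Phi(s,t):=(s,\,t-T(s))$ is a smooth diffeomorphism of a neighborhood of $(s_0,T(s_0))$ that carries $\Sigma$ onto the hyperplane $\{u=0\}$, where $u:=t-T(s)$, and carries $R_\pm$ onto the closed half-spaces $\{\pm u\ge0\}$. Because $C^1$-regularity is preserved under the $C^\infty$ diffeomorphism $\Phi$, it suffices to show that $y:=x\circ\Phi^{-1}$, i.e.\ $y(s,u)=x(s,u+T(s))$, is $C^1$ near $\Phi(s_0,T(s_0))$. By the chain rule $y$ is continuous, and $C^\infty$ on each of $\{u\ge0\}$ and $\{u\le0\}$ up to their common boundary.

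Next I would verify that the two one-sided gradients of $y$ agree on $\{u=0\}$. In the $u$-direction: on the open half-spaces $\partial_u y=(\partial_t x)\circ\Phi^{-1}=\cF(y)$, so by continuity of $\cF$ and $y$ the one-sided $u$-derivatives of $y$ at a boundary point both equal $\cF(y(s,0))$. In the $s$-directions: on $\{u=0\}$ the restriction is $y(s,0)=x(s,T(s))$, which by Proposition~\ref{smooth} is a single well-defined ($s$-dependent) point of $S$; hence the $s$-derivatives of the boundary traces of $y|_{\{u\ge0\}}$ and $y|_{\{u\le0\}}$ coincide there. Consequently $\nabla y$, defined off $\{u=0\}$, extends to a continuous map $G$ on a (shrunken) neighborhood of $\Phi(s_0,T(s_0))$.

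To finish I would run the standard segment estimate, now trivial because the interface is flat: for $p\in\{u=0\}$ and $q$ nearby with, say, $u(q)\ge0$, the segment $[p,q]$ stays in the convex closed half-space $\{u\ge0\}$ on which $y$ is $C^1$, so $y(q)-y(p)=\int_0^1 G\bigl(p+\theta(q-p)\bigr)(q-p)\,d\theta$ and therefore $|y(q)-y(p)-G(p)(q-p)|\le|q-p|\,\sup_{[p,q]}|G-G(p)|=o(|q-p|)$ by continuity of $G$. Thus $y$ is differentiable at every point of $\{u=0\}$ with derivative $G$; together with $\nabla y=G$ off $\{u=0\}$ this gives $y\in C^1$ with continuous gradient $G$, and hence $x=y\circ\Phi\in C^1$ near $(s_0,T(s_0))$. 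The one place care is genuinely needed --- and the reason for straightening $\Sigma$ rather than quoting a generic gluing lemma --- is the matching of the $s$-derivatives across $\Sigma$: a function that is $C^1$ on each side and merely continuous across need not be $C^1$, and on a curved interface the segment argument is muddied by segments tangent to $\Sigma$, whereas flattening reduces it to the convex half-space computation above. Everything else is routine.
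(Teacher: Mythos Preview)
Your proof is correct and follows essentially the same idea as the paper's: both show that the one-sided $s$-derivatives match along $\Sigma$ by exploiting the identity $x_+(s,T(s))=x_-(s,T(s))$ --- the paper differentiates this directly and cancels the cross-terms $\partial_sT\cdot x_\pm'$ using $x_+'=x_-'$, while you first straighten $\Sigma$ so the cross-term never appears, which is the same computation in different coordinates. You are also more explicit than the paper about the final gluing step (that matching continuous one-sided gradients on a smooth interface gives joint $C^1$), which the paper tacitly assumes.
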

\begin{proof}
We know that $x'(s,t)$ is continuous, and that $\p_sx(s,t)$ exists on  
$\{t\neq T(s)\}$ and extends smoothly up to $\{t=T(s)\}$ separately from
each side.  It suffices to show that the values from the two sides agree on 
$\{t=T(s)\}$.  Let $x_-$ denote the restriction of $x$ to 
$\{(s,t):s\in \cV, \,0\leq t\leq T(s)\}$ and 
$x_+$ the restriction of $x$ to 
$\{(s,t):s\in \cV, \,T(s)\leq t\leq t_0\}$. 
Then
$x_+(s,T(s))=x_-(s,T(s))$ for all $s$.  Differentiation 
in $s$ shows that $\p_s x_+ +\p_sT\cdot x_+'=\p_s x_- +\p_sT\cdot x_-'$
when $t=T(s)$.  Since $x_+'=x_-'$ when $t=T(s)$, it follows that also 
$\p_s x_+=\p_s x_-$ when $t=T(s)$.  
\end{proof}

Similar arguments apply for linear equations with piecewise smooth
coefficients.  In this case we do not assume continuity across the
singularity.  Consider an intial value problem 
\begin{equation}\label{linearequation}
x'(s,t) = M(s,t) x(s,t),\qquad x(s,0) = x_0(s), 
\end{equation}
where $x(s,t)\in \R^d$, $M(s,t)\in \R^{d\times d}$, and the parameter 
$s\in \R^k$. We assume that 
\[
M(s,t)=
\begin{cases}
M_-(s,t) & t< T(s)\\
M_+(s,t) & t> T(s),
\end{cases}
\]
where $T(s)>0$, $s\mapsto T(s)$ is $C^\infty$, each of $M_\pm$ is
$C^\infty$ on $\R^k\times \R$, and $x_0$ is $C^\infty$.  It is not assumed
that $M_-(s,T(s))=M_+(s,T(s))$.   We require that $x(s,t)$ is a weak
solution in the sense that that for each $s$, $x(s,t)$ is a solution for  
$t\neq T(s)$, and $x$ is continuous across $t=T(s)$. 
The proof of the  following proposition is similar to that of Proposition
\ref{smooth}: 

\begin{proposition}\label{linearprop} 
The problem \eqref{linearequation} has a unique weak solution for each $s$,
and  the restrictions of $x(s,t)$ to  
$\{(s,t):t\leq T(s)\}$ and $\{(s,t):t\geq T(s)\}$ are $C^\infty$.  
\end{proposition}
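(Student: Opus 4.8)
The plan is to follow the two-piece construction used in the proof of Proposition~\ref{smooth}, now invoking smooth dependence of solutions of \emph{linear} systems on parameters and initial data, so that no issue of finite-time blow-up arises (the coefficients $M_\pm$ are $C^\infty$ on all of $\R^k\times\R$, hence every solution is global in $t$). First I would let $x_-(s,t)$ be the solution of $x'=M_-(s,t)x$ with $x_-(s,0)=x_0(s)$. Since the system is linear with $M_-\in C^\infty(\R^k\times\R)$, the function $x_-$ exists for all $t\in\R$ and is jointly $C^\infty$ in $(s,t)$ by the standard theorem on smooth dependence of solutions of linear systems on parameters and initial conditions. Because $s\mapsto T(s)$ is $C^\infty$, the map $s\mapsto x_-(s,T(s))$ is then $C^\infty$.

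Next I would define $x_+(s,t)$ to be the solution of $x'=M_+(s,t)x$ with the initial condition $x_+(s,T(s))=x_-(s,T(s))$ prescribed at the $s$-dependent time $t=T(s)$. To obtain joint smoothness in $(s,t)$ I would set $\tau=t-T(s)$ and $y(s,\tau)=x_+(s,\tau+T(s))$; then $y$ solves $\p_\tau y=M_+(s,\tau+T(s))\,y$ with $y(s,0)=x_-(s,T(s))$, an initial value problem at the fixed time $\tau=0$ with smooth coefficient matrix and smooth initial data, so $y$ --- and hence $x_+$ --- is $C^\infty$ on $\R^k\times\R$. Setting $x(s,t)=x_-(s,t)$ for $t\le T(s)$ and $x(s,t)=x_+(s,t)$ for $t\ge T(s)$, the two definitions agree at $t=T(s)$ by construction, so $x$ is continuous across $\{t=T(s)\}$ and solves the correct equation off it; thus $x$ is a weak solution whose restrictions to $\{t\le T(s)\}$ and $\{t\ge T(s)\}$ are restrictions of the globally $C^\infty$ functions $x_-$, $x_+$, which is the asserted regularity.

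For uniqueness I would fix $s$ and let $\tilde x$ be any weak solution. On $[0,T(s)]$ it solves the linear system with matrix $M_-(s,\cdot)$ and initial value $x_0(s)$, so uniqueness for linear ODE gives $\tilde x=x_-$ there, and continuity forces $\tilde x(s,T(s))=x_-(s,T(s))$; on $[T(s),\infty)$ it then solves the linear system with matrix $M_+(s,\cdot)$ with this initial value at $t=T(s)$, so again $\tilde x=x_+$. Hence $\tilde x=x$.

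The only delicate point --- and the main, rather modest, obstacle --- is establishing that $x_+$ is \emph{jointly} smooth in $(s,t)$, since its initial condition is imposed at the moving time $T(s)$ rather than at a fixed time; the substitution $\tau=t-T(s)$ removes this difficulty and reduces matters to the usual theorem on smooth dependence on parameters and initial conditions. Everything else is a routine transcription of the argument for Proposition~\ref{smooth}, simplified by linearity.
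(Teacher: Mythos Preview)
Your proof is correct and follows essentially the same approach as the paper's: construct $x_-$ on $\{t\le T(s)\}$, then $x_+$ on $\{t\ge T(s)\}$ with matching data at $t=T(s)$, and patch. You supply more detail than the paper does --- in particular the substitution $\tau=t-T(s)$ to verify joint smoothness of $x_+$ and the explicit uniqueness argument --- but the underlying argument is the same.
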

\begin{proof}
There is a unique solution $x_-(s,t)$ to 
\[
x_-'(s,t) = M_-(s,t) x_-(s,t),\qquad x_-(s,0) = x_0(s),  
\]
and $x_-$ is $C^\infty$.  Likewise, there is a unique solution $x_+(s,t)$ to 
\[
x_+'(s,t) = M_+(s,t) x_+(s,t),\qquad x_+(s,T(s)) = x_-(s,T(s)),  
\]
and $x_+$ is $C^\infty$.  The function defined by
\[
x(s,t)=
\begin{cases}
x_-(s,t) & t\leq T(s)\\
x_+(s,t) & t\geq T(s) 
\end{cases}
\]
is a weak solution of \eqref{linearequation}, and is clearly the only weak 
solution.  
\end{proof}
\medskip
To analyze continuity at $\e=0$  we will use the following two
results, which are standard applications of Gronwall's inequality.  Let
$|\cdot|$ denote the Euclidean norm on vectors, or the Euclidean operator
norm on matrices.  
\begin{proposition}\label{gronwalllinear}
Let $\cK_i:[t_0,t_1]\to \R^{d\times d}$, $i=1$, $2$, be bounded and 
measurable with $|\cK_1(t)|\leq L$, and let $f_i:[t_0,t_1]\to \R^d$,  
$i=1$, $2$, be integrable.  Let $x_i:[t_0,t_1]\to \R^d$, $i=1$, $2$, be 
continuous weak solutions to  
\[
x_i'(t)=\cK_i(t)x_i(t)+f_i(t)
\]
and set $C=\sup_{t\in [t_0,t_1]}|x_2(t)|$.  Then   
\[
\begin{split}
|x_1(t)-x_2(t)|\leq &|x_1(t_0)-x_2(t_0)|e^{L(t-t_0)}\\
&+\int_{t_0}^t\big(C|\cK_1(s)-\cK_2(s)|
+|f_1(s)-f_2(s)|\big)e^{L(t-s)}\,ds. 
\end{split}
\]
\end{proposition}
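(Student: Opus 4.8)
The plan is to reduce the estimate to the integral form of the two equations and then apply Gronwall's inequality in its standard form. First I would write, for $i=1,2$, the integral identity
\[
x_i(t)=x_i(t_0)+\int_{t_0}^t\big(\cK_i(\s)x_i(\s)+f_i(\s)\big)\,d\s,
\]
which is valid for a continuous weak solution since the integrand is integrable on $[t_0,t_1]$. Subtracting the two identities and inserting $\pm\cK_1(\s)x_2(\s)$ inside the integral gives
\[
x_1(t)-x_2(t)=x_1(t_0)-x_2(t_0)+\int_{t_0}^t\cK_1(\s)\big(x_1(\s)-x_2(\s)\big)\,d\s
+\int_{t_0}^t\big((\cK_1(\s)-\cK_2(\s))x_2(\s)+f_1(\s)-f_2(\s)\big)\,d\s.
\]

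Next I would set $u(t)=|x_1(t)-x_2(t)|$ and bound the right-hand side in norm, using $|\cK_1(\s)|\le L$ on the first integral and $|x_2(\s)|\le C$ on the inhomogeneous term. Writing
\[
\beta(t):=|x_1(t_0)-x_2(t_0)|+\int_{t_0}^t\big(C|\cK_1(\s)-\cK_2(\s)|+|f_1(\s)-f_2(\s)|\big)\,d\s,
\]
which is nondecreasing, one obtains the integral inequality $u(t)\le \beta(t)+L\int_{t_0}^t u(\s)\,d\s$. Applying the standard Gronwall lemma with a nondecreasing forcing term $\beta$ yields $u(t)\le \beta(t)e^{L(t-t_0)}$. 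This is slightly weaker than the claimed bound, where the exponential weight $e^{L(t-\s)}$ sits inside the integral against the difference terms rather than $e^{L(t-t_0)}$ multiplying everything; to get the sharp form I would instead apply the differential form of Gronwall: the absolutely continuous function $w(t):=e^{-L(t-t_0)}\int_{t_0}^t u$ satisfies $w'(t)\le e^{-L(t-t_0)}\beta(t)$, and integrating from $t_0$ to $t$ and rearranging reproduces exactly the stated inequality after one integration by parts in the term $\int_{t_0}^t L e^{L(t-\s)}\beta(\s)\,d\s$. Alternatively, one can apply the integral Gronwall inequality in the refined version that keeps the kernel inside, which is the version usually quoted with precisely this conclusion.

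The only point requiring a little care — and the closest thing to an obstacle — is the weak-solution hypothesis: $\cK_i$ and $f_i$ are merely bounded/integrable and the $x_i$ are only continuous with the ODE holding off a measure-zero (in fact finite) set, so one must justify the fundamental-theorem-of-calculus step. This is routine: a continuous weak solution in the sense used here is absolutely continuous with derivative $\cK_i x_i+f_i$ a.e., since the right-hand side is integrable, so the integral identity holds; the finitely many jump points of $\cK_i$ contribute nothing. With that in hand every manipulation above is an application of the triangle inequality for integrals and monotonicity, and the conclusion follows.
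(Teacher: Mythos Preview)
Your argument is correct and is exactly the standard Gronwall computation the paper has in mind; the paper does not actually give a proof, merely citing the result as a ``standard application of Gronwall's inequality.'' Your route to the sharp form with the kernel $e^{L(t-\s)}$ inside the integral is a bit roundabout---one can get it in one step from the general integral Gronwall inequality $u(t)\le a(t)+\int_{t_0}^t a(\s)Le^{L(t-\s)}\,d\s$ followed by a single integration by parts using $\beta'(\s)=C|\cK_1(\s)-\cK_2(\s)|+|f_1(\s)-f_2(\s)|$---but what you wrote is fine.
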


\begin{proposition}\label{gronwallnonlinear}
Let $F_i:\R^d\to \R^d$, $i=1$, $2$ be Lipschitz with constant $L$ and let
$x_i:[t_0,t_1]\to \R^d$ be $C^1$ solutions to  
\[
x_i'(t)=F_i(x_i(t)).
\]
Suppose also that $|F_1(x)-F_2(x)|\leq \delta$ for $x\in x_2([t_0,t_1])$.
Then  
\[
|x_1(t)-x_2(t)|\leq |x_1(t_0)-x_2(t_0)|e^{L(t-t_0)}
+\frac{\de}{L}\big(e^{L(t-t_0)}-1\big).
\]
\end{proposition}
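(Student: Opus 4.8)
The plan is to reduce the vector estimate to a scalar integral inequality for $u(t):=|x_1(t)-x_2(t)|$ and then invoke Gronwall's inequality in integral form. Since the $x_i$ are $C^1$ solutions, the fundamental theorem of calculus gives
\[
x_1(t)-x_2(t)=x_1(t_0)-x_2(t_0)+\int_{t_0}^t\big(F_1(x_1(s))-F_2(x_2(s))\big)\,ds .
\]
First I would split the integrand as $\big(F_1(x_1(s))-F_1(x_2(s))\big)+\big(F_1(x_2(s))-F_2(x_2(s))\big)$. By the Lipschitz hypothesis on $F_1$ the first piece has norm at most $L\,u(s)$, and since $x_2(s)\in x_2([t_0,t_1])$ the second piece has norm at most $\de$. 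Taking norms and using the triangle inequality for integrals yields the scalar inequality
\[
u(t)\le u(t_0)+\int_{t_0}^t\big(L\,u(s)+\de\big)\,ds .
\]

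Next I would run the standard Gronwall argument. Let $v(t)$ denote the right-hand side above; then $v$ is $C^1$, $v(t_0)=u(t_0)$, $u\le v$, and $v'(t)=L\,u(t)+\de\le L\,v(t)+\de$. Multiplying by the integrating factor $e^{-L(t-t_0)}$ gives $\frac{d}{dt}\big(v(t)e^{-L(t-t_0)}\big)\le\de\,e^{-L(t-t_0)}$; integrating from $t_0$ to $t$ produces
\[
v(t)\le v(t_0)e^{L(t-t_0)}+\frac{\de}{L}\big(e^{L(t-t_0)}-1\big).
\]
Since $u(t)\le v(t)$ and $v(t_0)=|x_1(t_0)-x_2(t_0)|$, this is precisely the asserted bound.

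The only mild subtlety — the nearest thing to an obstacle — is that $u(t)=|x_1(t)-x_2(t)|$ need not be differentiable at times where $x_1(t)=x_2(t)$, so one should not differentiate $u$ directly. Working with the integral inequality above sidesteps this, since $u$ is Lipschitz (composition of the $C^1$ curves with the Lipschitz norm) and the estimate is obtained purely by integrating the vector identity and applying the triangle inequality. Alternatively one could regularize, estimating $\sqrt{u(t)^2+\eta^2}$ and letting $\eta\to 0^+$, but the integral form is cleaner and requires no limiting argument.
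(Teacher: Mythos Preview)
Your argument is correct and is exactly the standard derivation the paper has in mind: the paper does not actually prove this proposition but simply introduces it (together with Proposition~\ref{gronwalllinear}) as a ``standard application of Gronwall's inequality,'' and your integral-inequality reduction followed by the usual integrating-factor Gronwall step is precisely such an application.
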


Our ultimate goal in this analysis will be to understand the behavior of
stable solutions to \eqref{jacobiparallel}, \eqref{jacobiperp} as 
$\e\to 0$.  It is clear from \eqref{KY} and \eqref{eq:Kperp} that first one
needs to study $\cA_{r,\e}$ and $\r_{s,r,\e}$.  
Certainly $\cA_{r,\e}(\rho)$ is a $C^\infty$ function of $(\rho,r,\e)$ 
for $\e>0$.  Upon reducing to a first order system in the usual way,
Proposition~\ref{linearprop} implies that $\cA_{r,0}(\rho)$ and 
$\cA_{r,0}'(\rho)$ are continuous functions of $(\rho,r)$ which restrict 
to be $C^\infty$ on each of $\{r\geq \rho\}$ and $\{r\leq \rho\}$. 
The same argument as in the proof of Proposition~\ref{c1} shows 
that $\p_r\cA_{r,0}(\rho)$ is continuous across $\rho=r$, so 
that $\cA_{r,0}(\rho)$ is jointly $C^1$ everywhere.  Our ultimate interest
is in $r$ near $\pi/4$, so fix a small $\eta>0$ and set 
$\cI=[\pi/4-\eta,\pi/4+\eta]$.   

\begin{proposition}\label{Aconverge}
For $k=0,1$, 
$\p_\rho^k\cA_{r,\e}(\rho)\to \p_\rho^k\cA_{r,0}(\rho)$ 
uniformly on compact subsets of $(\r,r)\in [0,\infty)\times \cI$ as 
$\e\to 0$.  For $k\geq 2$, 
$\p_\rho^k\cA_{r,\e}(\rho)\to \p_\rho^k\cA_{r,0}(\rho)$ uniformly on
compact subsets of $([0,\infty)\times \cI)\cap \{\rho\neq r\}$ as 
$\e\to 0$.       
\end{proposition}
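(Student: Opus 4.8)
The plan is to reduce the second-order equation \eqref{Aeq} to the associated first-order linear system and then exploit two structural features already visible in \eqref{Aformula}: for every $\e\ge 0$ one has $\cA_{r,\e}(\rho)=\sin\rho$ identically on $\{\rho\le r\}$, and the coefficient $K^\parallel_{r,\e}$ agrees with its $\e\to 0$ limit $K^\parallel_{r,0}$ outside the strip $\{r\le\rho\le r+\e\}$, whose width tends to $0$. Writing $X_{r,\e}(\rho)=(\cA_{r,\e}(\rho),\cA_{r,\e}'(\rho))$, this vector solves $X'=M_{r,\e}(\rho)X$ with initial value $X(0)=(0,1)$ independent of $\e$, and $\|M_{r,\e}(\rho)\|$ is bounded by an absolute constant because $|K^\parallel_{r,\e}|\le 1$ for all $\e\ge 0$.

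For $k=0$ and $k=1$ I would apply the Gronwall estimate of Proposition~\ref{gronwalllinear} to $X_{r,\e}$ and $X_{r,0}$ directly. The three ingredients it needs are all at hand: $X_{r,0}$ is bounded on $[0,R]\times\cI$ for each $R$ (by the continuity of $\cA_{r,0}$ and $\cA_{r,0}'$ obtained earlier from Proposition~\ref{linearprop}); the matrices $M_{r,\e}$ are uniformly bounded; and $\|K^\parallel_{r,\e}-K^\parallel_{r,0}\|_{L^1[0,R]}\le 2\e$ uniformly in $r\in\cI$, since that integrand is supported in $[r,r+\e]$ and is bounded there by $2$. Proposition~\ref{gronwalllinear} then yields a bound $\sup_{[0,R]}|X_{r,\e}-X_{r,0}|\le C_R e^{LR}\e$, uniform in $r\in\cI$, which tends to $0$; as the components of $X_{r,\e}$ are $\cA_{r,\e}$ and $\cA_{r,\e}'$, this is precisely the claim for $k\le 1$.

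For $k\ge 2$ I would fix a compact set $Q\subset([0,\infty)\times\cI)\cap\{\rho\neq r\}$ and split it as $Q=Q_+\sqcup Q_-$, where $Q_+=Q\cap\{\rho>r\}$ and $Q_-=Q\cap\{\rho<r\}$; both pieces are compact and $|\rho-r|\ge\delta$ on $Q$ for some $\delta>0$. On $Q_-$ we have $\rho<r$, so $\cA_{r,\e}(\rho)=\sin\rho=\cA_{r,0}(\rho)$ and all $\rho$-derivatives coincide exactly. On $Q_+$, once $\e<\delta$ every point satisfies $\rho>r+\e$, so by \eqref{Aformula} $\cA_{r,\e}(\rho)=a_+(r,\e)e^{\rho}+a_-(r,\e)e^{-\rho}$ there and hence $\p_\rho^k\cA_{r,\e}(\rho)=a_+(r,\e)e^{\rho}+(-1)^k a_-(r,\e)e^{-\rho}$. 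Evaluating $\cA_{r,\e}$ and $\cA_{r,\e}'$ at one fixed point $\rho_0$ lying beyond all the transition strips (e.g.\ $\rho_0=\pi/4+\eta+1$) and inverting the resulting $2\times2$ linear system expresses $a_\pm(r,\e)$ as fixed linear combinations of $\cA_{r,\e}(\rho_0)$ and $\cA_{r,\e}'(\rho_0)$; the $k\le 1$ convergence already established then forces $a_\pm(r,\e)\to a_\pm(r,0)$ uniformly in $r\in\cI$. Since $\rho$ stays bounded on $Q_+$, the factors $e^{\pm\rho}$ are bounded there, and uniform convergence of $\p_\rho^k\cA_{r,\e}$ to $\p_\rho^k\cA_{r,0}$ on $Q_+$ follows.

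The only genuine subtlety I anticipate is in the $k\ge 2$ step: one must observe that a compact set disjoint from $\{\rho=r\}$ stays clear of every transition strip $\{r\le\rho\le r+\e\}$ once $\e$ is small, so that the explicit exponential representation applies uniformly on it. This is exactly why the domain is restricted to $\{\rho\neq r\}$ in the statement for $k\ge 2$: uniform convergence cannot hold across $\{\rho=r\}$, where $\cA_{r,0}''$ jumps by $2\sin r$ while each $\cA_{r,\e}''$ is continuous. Everything else is a routine use of Gronwall's inequality together with the continuity of $\cA_{r,0}$ already in hand.
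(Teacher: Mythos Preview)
Your proposal is correct and follows essentially the same route as the paper: reduce to a first-order system and apply the Gronwall estimate of Proposition~\ref{gronwalllinear} using $K^\parallel_{r,\e}\to K^\parallel_{r,0}$ in $L^1$ for $k\le 1$, then for $k\ge 2$ use that $\cA_{r,\e}=\sin\rho$ on $\{\rho<r\}$ and the exponential form \eqref{Aformula} on $\{\rho>r+\e\}$ together with the convergence $a_\pm(r,\e)\to a_\pm(r,0)$ inherited from the $k\le 1$ case. Your write-up is in fact slightly more explicit than the paper's (the quantitative bound $\|K^\parallel_{r,\e}-K^\parallel_{r,0}\|_{L^1}\le 2\e$, the linear-algebra extraction of $a_\pm$ at a fixed $\rho_0$, and the $\delta$-clearance argument), but the ideas are the same.
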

\begin{proof}
Reduce \eqref{Aeq} to a first order system in the usual way: set  
\[
x=\begin{pmatrix}
\cA\\
\cA' 
\end{pmatrix} \qquad 
\cK_{r,\e}=\begin{pmatrix}
0&1\\
-K^\|_{r,\e}&0
\end{pmatrix}
\]
so that \eqref{Aeq} becomes $x'=\cK x$, 
$x(0)=\begin{pmatrix} 0\\1 \end{pmatrix}$.  The first sentence follows
from Proposition~\ref{gronwalllinear} since  
$K^\|_{r,\e}-K^\|_{r,0}\to 0$ in $L^1_{loc}([0,\infty))$ uniformly in $r$.     

The convergence for $k\geq 2$ on $\{\rho<r\}$ is clear since  
$\cA_{r,\e}(\rho)$ is independent of $\e\geq 0$ on that set. 
Equation \eqref{Aformula} implies that as $\e\to 0$, eventually 
$\cA_{r,\e}$ has the form $\cA_{r,\e}(\rho)=a_+e^\rho+a_-e^{-\rho}$ on
any compact subset of $\{\rho>r\}$.  The convergence for $k=0$ implies that   
$a_\pm(r,\e)\to a_\pm(r,0)$.  The result for $k\geq 2$ therefore follows
upon differentiation in $\rho$.  
\end{proof}

We now turn to geodesics.  To streamline the notation we will often write   
$\nu=(r,\e)$, $\nu_0=(\pi/4,0)$, $\mu=(s,r,\e)$ and
$\mu_0=(0,\pi/4,0)$.  For example, we write $g_\nu:=g_{r,\e}$ or
$\g_\mu:=\g_{s,r,\e}$.  Recall that for $s\geq 0$, $\ga_\mu(t)$ denotes a
unit speed geodesic for $g_\nu$ whose distance from the origin equals $s$,
parametrized so that this minimum distance is achieved at   
$t=0$, and $\rho_\mu(t)= \rho(\ga_\mu(t))$.  
For $s>0$, $\rho_\mu(t)$ is the solution of
\eqref{rhoequation} with $\cA=\cA_\nu$ and initial conditions $\rho(0)=s$,
$\rho'(0)=0$, while  
$\rho_{0,r,\e}(t)=t$ solves the same equation but has initial 
conditions $\rho(0)=0$, $\rho'(0)=1$.  Throughout we restrict attention to
$\e$ small and $r\in \cI$, say $(r,\e)\in \cI\times [0,\e_0]$ for
fixed small positive $\e_0$.  Often we consider $s$ to be small, so    
we also fix a small $s_0>0$ and in these situations we will assume 
$s\in [0,s_0]$.  Despite the  
apparent difference in the initial conditions, \eqref{rhotsmall} shows that
$\rho_\mu(t)$ is smooth (and independent of $r$, $\e$) for   
$(t,s)\in ([0,t_0]\times [0,s_0])\setminus \{(0,0)\}$ for appropriately  
chosen $t_0$ small, and Lipschitz continuous for 
$(t,s)\in [0,t_0]\times [0,s_0]$.  
The different description of the initial conditions and the
discontinuity of the first derivatives of $\rho_\mu(t)$ at 
$(t,s)=(0,0)$ are a reflection of the singularity of polar coordinates at 
the origin.    

A first observation is that $\r_\mu(t)\to \infty$ as  
$t\to \infty$ uniformly for $(s,r,\e)\in [0,\infty)\times \cI\times
[0,\e_0]$.  In fact, \eqref{Aformula} together with the continuity of
$a_\pm$ in $(r,\e)$ established in Proposition~\ref{Aconverge} imply that
there is $a>0$ so that $\cA_\nu'(\r)/\cA_\nu(\r)\geq a$ for  
$(r,\e)\in \cI\times [0,\e_0]$ and $\r>0$.  It follows that 
$\r_\mu(t)\geq a^{-1}\log(\cosh(at))$ by the comparison argument in 
\eqref{constgeodeq}, \eqref{rhosolution}.

We will analyze \eqref{rhoequation} with $\cA=\cA_{r,\e}$ by incorporating 
$r$ as an initial value and rewriting as $x'=\cF_\e(x)$ with  
\begin{equation}\label{rhoreduce}
x=\begin{pmatrix}
\rho\\
v\\
r
\end{pmatrix},\qquad
\cF_\e(x)=\begin{pmatrix}
v\\
\frac{\cA_{r,\e}'(\rho)}{\cA_{r,\e}(\rho)}(1-v^2)\\   
0
\end{pmatrix}
\end{equation}
and with initial conditions
\[
x(0)=\begin{pmatrix}
s\\
0\\
r
\end{pmatrix}\quad\text{if}\,\,\, s>0,
\qquad
x(0)=\begin{pmatrix}
0\\
1\\
r
\end{pmatrix}\quad\text{if}\,\,\, s=0. 
\]
Our starting point is the following. 
\begin{lemma}\label{rhocontinuous}
$\r_\mu(t)$ is a continuous function of 
$(t,s,r,\e)\in [0,\infty)\times [0,\infty)\times \cI\times [0,\e_0]$.
$\rho_\mu'(t)$ and $\rho_\mu''(t)$ restrict to continuous functions on  
$\big([0,\infty)\times [0,\infty)\setminus \{(0,0)\}\big)\times \cI\times
    [0,\e_0]$.     
\end{lemma}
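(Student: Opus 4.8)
The plan is to run the analysis of \eqref{rhoreduce}, viewing the geodesic equation \eqref{rhoequation} as the autonomous first order system $x'=\cF_\e(x)$ on $\{\rho>0\}$, and to extract continuity from Proposition~\ref{Aconverge} via Gronwall's inequality (Proposition~\ref{gronwallnonlinear}). The two structural facts I would establish at the outset are: (i) on every compact subset $Q$ of $\{(\rho,v,r):\rho>0\}$ the maps $x\mapsto\cF_\e(x)$ are Lipschitz with a constant uniform over $\e\in[0,\e_0]$; and (ii) $\cF_\e\to\cF_0$ uniformly on $Q$ as $\e\to0$, and $\cF_0$ is continuous there. Both are statements about $\cA_\nu'(\rho)/\cA_\nu(\rho)$, $\nu=(r,\e)$. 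Fact (ii) is immediate from the $k=0,1$ case of Proposition~\ref{Aconverge} (uniform convergence of $\cA_\nu$ and $\cA_\nu'$ on compacts, continuity of the limits, and $\cA_{r,0}>0$); the crucial point is that it is $\cA_\nu,\cA_\nu'$ --- not $\cA_\nu''=-K^\|_{r,\e}\cA_\nu$, which has no pointwise limit at $\rho=r$ --- that enters $\cF_\e$, so only the ``good'' regime of Proposition~\ref{Aconverge} is used. For (i), $\partial_\rho(\cA_\nu'/\cA_\nu)=-K^\|_{r,\e}-(\cA_\nu'/\cA_\nu)^2$ is bounded on $Q$ uniformly in $\e$ since $|K^\|_{r,\e}|\leq1$ and $\cA_\nu'/\cA_\nu$ is bounded on $Q$ uniformly in $\e$ by (ii), while $\partial_r(\cA_\nu'/\cA_\nu)$ is bounded uniformly in $\e$ because the pair $(\partial_r\cA_\nu,\partial_r\cA_\nu')$ solves the variational system for \eqref{Aeq} with forcing proportional to $\partial_rK^\|_{r,\e}\cdot\cA_\nu=\tfrac1\e\fe'(\tfrac{\rho-r}{\e})\cA_\nu$, whose $L^1$ norm in $\rho$ stays bounded as $\e\to0$ since $\tfrac1\e\fe'(\tfrac{\,\cdot\,-r}{\e})$ is an approximate identity (and for $\e=0$ the bound on $Q\cap\{\rho\neq r\}$ suffices).

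Granting (i)--(ii), I would first treat the region $s>0$. Since $\cA_\nu'>0$ on $(0,\infty)$, the solution of \eqref{rhoreduce} with $x(0)=(s,0,r)$ has $\rho_\mu''\geq0$ and $\rho_\mu'(0)=0$, so $\rho_\mu$ is nondecreasing and $s\leq\rho_\mu(t)\leq s+t$ (the upper bound because $|\rho_\mu'|\leq1$). Hence over any box $s\in[\sigma,S]$, $t\in[0,T]$ the trajectory stays in a fixed compact subset $Q$ of $\{\rho>0\}$ (with $v\in[0,1]$, $r\in\cI$), and applying Proposition~\ref{gronwallnonlinear} along trajectories with $F_i=\cF_{\e_i}$ --- after extending each $\cF_\e$ outside a neighbourhood of $Q$ to be globally Lipschitz, which does not change $\rho_\mu$ --- using (i) for the common Lipschitz constant, (ii) for $\sup_Q|\cF_{\e_1}-\cF_{\e_2}|\to0$, and continuity of the data $(s,0,r)$ in $(s,r)$, shows that $x_\mu(t)=(\rho_\mu(t),\rho_\mu'(t),r)$ is jointly continuous in $(t,s,r,\e)$ on $\{s>0\}$. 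In particular $\rho_\mu$ and $\rho_\mu'$ are, and then so is $\rho_\mu''=(\cA_\nu'/\cA_\nu)(\rho_\mu)\,(1-(\rho_\mu')^2)$ by the joint continuity of $(\rho,r,\e)\mapsto\cA_\nu'(\rho)/\cA_\nu(\rho)$ on $\{\rho>0\}\times\cI\times[0,\e_0]$ from Proposition~\ref{Aconverge}.

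To finish I would handle a neighbourhood of the corner $(t,s)=(0,0)$, where \eqref{rhoreduce} is itself singular ($\cA_\nu'/\cA_\nu=\cot\rho$ blows up as $\rho\to0$). Fix $s_0,t_0>0$ small enough that $s_0<\min\cI$ and $t_0\leq\inf\{\ell_r(s):s\in[0,s_0],\,r\in\cI\}$, which is positive by \eqref{ell}; then for every $(t,s,r,\e)\in[0,t_0]\times[0,s_0]\times\cI\times[0,\e_0]$ we have $s<r$, so $\ga_\mu$ enters $B_r(0)$ and stays in it up to time $t_0$, and \eqref{rhotsmall} gives $\rho_\mu(t)=\arccos(\cos s\cos t)$ on the whole box. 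This expression is continuous on the closed box --- in particular $0$ at the corner --- is $C^\infty$ off $(t,s)=(0,0)$, and agrees with $\rho_{0,r,\e}(t)=t$ at $s=0$. For $t\geq t_0$ one re-initializes $x'=\cF_\e(x)$ at time $t_0$ with data $\big(\arccos(\cos s\cos t_0),\ \partial_t\arccos(\cos s\cos t)\big|_{t_0},\ r\big)$, which is $C^\infty$ in $(s,r,\e)$ with first coordinate $\geq t_0>0$; running the argument of the previous paragraph from initial time $t_0$ gives joint continuity for $t\geq t_0$, and gluing along $t=t_0$ (where the two descriptions agree) yields joint continuity of $\rho_\mu$ on the box, hence --- together with the $s\geq s_0/2$ case --- on all of $[0,\infty)^2\times\cI\times[0,\e_0]$. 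The same gluing, using smoothness of $\arccos(\cos s\cos t)$ off the corner, shows $\rho_\mu'$ is continuous on $([0,\infty)^2\setminus\{(0,0)\})\times\cI\times[0,\e_0]$, and then so is $\rho_\mu''=(\cA_\nu'/\cA_\nu)(\rho_\mu)(1-(\rho_\mu')^2)$, since there $\rho_\mu(t)>0$ and $(\rho,r,\e)\mapsto\cA_\nu'(\rho)/\cA_\nu(\rho)$ is jointly continuous.

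I expect the main obstacle to be the singular nature of the limit $\e\to0$: the coefficient $K^\|_{r,\e}$ of \eqref{rhoequation} develops a jump at $\rho=r$, so there is no smooth dependence on $\e$ to lean on. What makes everything go through is the point flagged in (i)--(ii): the geodesic vector field is assembled from $\cA_\nu$ and $\cA_\nu'$, for which Proposition~\ref{Aconverge} supplies \emph{uniform} (not merely $L^1$) convergence; the secondary technical nuisance is keeping the Lipschitz constant of $\cF_\e$ uniform as $\e\to0$ in the $r$-slot, where differentiation produces the spike $\tfrac1\e\fe'(\tfrac{\rho-r}{\e})$, controlled only through its bounded total mass.
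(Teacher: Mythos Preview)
Your argument is correct and follows the same route as the paper: handle a neighbourhood of the corner $(t,s)=(0,0)$ via the explicit spherical formula \eqref{rhotsmall}, and elsewhere apply Proposition~\ref{gronwallnonlinear} to $x'=\cF_\e(x)$ using the uniform convergence of $\cA_\nu'/\cA_\nu$ from Proposition~\ref{Aconverge}, then recover $\rho_\mu''$ from the equation. You are more explicit than the paper about the uniform-in-$\e$ Lipschitz constant of $\cF_\e$ (notably in the $r$-slot, via the variational system with $L^1$-bounded forcing), which the paper leaves implicit, but the core is the same.
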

\begin{proof}
We have already discussed the regularity near $t=s=0$.   
It is clear that $\rho_{s,r,\e}(t)$ restricts to a $C^\infty$ function of 
$(t,s,r,\e)\in \big([0,\infty)\times [0,\infty)\setminus \{(0,0)\}\big) 
\times \cI\times (0,\e_0]$. 
Now $\cF_0$ is a locally Lipschitz function of $x$,
so $\p_t^l\rho_{s,r,0}(t)$ is a continuous function of
$(t,s,r)\in \big([0,\infty)\times [0,\infty)\setminus \{(0,0)\}\big) 
\times \cI$ for $0\leq l\leq 2$.  Proposition~\ref{Aconverge} implies that  
$\cA'_{r,\e}(\r)/\cA_{r,\e}(\r)$ converges to the corresponding function 
evaluated at $\e=0$ uniformly on compact subsets of 
$(\r,r)\in(0,\infty)\times \cI$.  The fact that  
$\p_t^l\rho_{s,r,\e}\to \p_t^l\rho_{s,r,0}$ uniformly on compact subsets of 
$(t,s,r)\in \big([0,\infty)\times [0,\infty)\setminus (0,0)\big)
\times \cI$ for $l=0$, $1$ follows from
Proposition~\ref{gronwallnonlinear}.  The convergence of $\rho''$ as 
$\e\to 0$ then follows from the differential equation \eqref{rhoequation}.    
\end{proof}

We will need to know similar continuity properties of solutions of
\eqref{jacobiparallel} and \eqref{jacobiperp} in our analysis of
$s$-derivatives of $\rho$ and in later arguments. 

\begin{lemma}\label{Xcontinuous}
Let $\cX_\mu$ be any one of $\,\cU^\prl_\mu$, $\cU^\perp_\mu$,
$\cV^\prl_\mu$, or $\cV^\perp_\mu$.  Then 
$\cX_\mu(t)$ and $\cX_\mu'(t)$ are continuous functions of   
$(t,s,r,\e)\in [0,\infty)\times [0,\infty)\times\cI\times [0,\e_0]$. 
\end{lemma}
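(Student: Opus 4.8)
The plan is to reduce each of the four scalar problems \eqref{jacobiparallel}, \eqref{jacobiperp} to an equivalent first-order linear system with a parameter-independent initial condition and then extract continuous dependence on $(s,r,\e)$ from the $L^1$ form of Gronwall's inequality, Proposition~\ref{gronwalllinear}. Writing $\cX_\mu$ for any one of $\cU^\prl_\mu$, $\cU^\perp_\mu$, $\cV^\prl_\mu$, $\cV^\perp_\mu$, the pair $y_\mu=(\cX_\mu,\cX_\mu')$ is the (weak, if $\e=0$) solution of $y'=M_\mu(t)y$, $y(0)=y_0$, where
\[
M_\mu(t)=\begin{pmatrix}0&1\\-k_\mu(t)&0\end{pmatrix},
\]
$y_0\in\{(1,0),(0,1)\}$ is independent of $\mu$, and $k_\mu=K^\prl_{r,\e}\circ\r_\mu$ for the parallel equations while $k_\mu=K_{s,r,\e}$ as in \eqref{KY} for the normal equations. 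The quantity $k_\mu(t)$ is bounded uniformly in $(s,r,\e)\in[0,\infty)\times\cI\times[0,\e_0]$ and $t\ge 0$: this is clear from \eqref{eq3} for the $K^\prl_{r,\e}$-part, and for the $K^\perp_{r,\e}$-part it follows from \eqref{eq:Kperp}, \eqref{Aformula} and the continuity of $a_\pm$ in $(r,\e)$ recorded after Proposition~\ref{Aconverge}. Hence the solutions $y_\mu$ are bounded on each $[0,T]$ uniformly in $\mu$.

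The key step is to show that $(s,r,\e)\mapsto k_\mu$ is continuous as a map into $L^1([0,T])$ for every $T>0$. Granting this, Proposition~\ref{gronwalllinear} applied with $f_i\equiv 0$ and identical initial data gives $\sup_{t\in[0,T]}|y_\mu(t)-y_{\mu_1}(t)|\le Ce^{LT}\int_0^T|k_\mu-k_{\mu_1}|\,dt\to 0$ as $\mu\to\mu_1$, so $(s,r,\e)\mapsto y_\mu$ is continuous into $C([0,T],\R^2)$; combined with continuity of $t\mapsto y_\mu(t)$ for fixed $\mu$ this yields joint continuity of $y_\mu(t)$, hence of $\cX_\mu(t)$ and $\cX_\mu'(t)$, in $(t,s,r,\e)$. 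To prove the $L^1$-continuity, since continuity is a local property I would fix $\mu_1=(s_1,r_1,\e_1)$ and work on a compact neighborhood in $(t,s)$. By Lemma~\ref{rhocontinuous} and compactness of $\cI\times[0,\e_0]$ there is $\de>0$ with $\r_\mu(t)<\pi/4-\eta\le r$ whenever $0\le t,s\le\de$; there $K^\prl_{r,\e}(\r_\mu(t))=1$ and, by \eqref{eq:Kperp} with $\cA=\sin$, also $K^\perp_{r,\e}(\r_\mu(t))=1$, so $k_\mu\equiv 1$. This removes a neighborhood of the coordinate-singular point $(t,s)=(0,0)$, the only place where $\r_\mu'$ fails to be jointly continuous. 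Away from it, Lemma~\ref{rhocontinuous} gives $\r_\mu\to\r_{\mu_1}$ and $\r_\mu'\to\r_{\mu_1}'$ uniformly, Proposition~\ref{Aconverge} (case $k=0,1$) gives $\cA_{r,\e},\cA_{r,\e}'\to\cA_{r_1,\e_1},\cA_{r_1,\e_1}'$ uniformly on compact $\r$-sets, and since $K^\perp_{r,\e}(\r)$ depends continuously on $(\r,\cA_{r,\e}(\r),\cA_{r,\e}'(\r))$ with $\cA_{r,\e}$ bounded away from $0$ on the relevant sets, the $K^\perp$-contribution to $k_\mu$ converges boundedly a.e., hence in $L^1([0,T])$ by dominated convergence; likewise $(\r_\mu')^2\to(\r_{\mu_1}')^2$ boundedly a.e.

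The one genuinely delicate term is $K^\prl_{r,\e}(\r_\mu(t))=1-2\fe\big((\r_\mu(t)-r)/\e\big)$ (or $1-2H(\r_\mu(t)-r)$ when $\e=0$). Here I would use that $\r_\mu$ is nondecreasing in $t\ge 0$ (from \eqref{rhoequation}, $\r_\mu''\ge 0$ and $\r_\mu'(0)\ge 0$), so this term is a nonincreasing function of $t$ equal to $1$ before $\r_\mu$ reaches $r$, equal to $-1$ after $\r_\mu$ reaches $r+\e$, with a transition set whose length tends to $0$ as $\e\to 0$ and which is empty once $s\ge r$. The relevant crossing times are continuous in $(s,r,\e)$: on $\{s<r\}$ the geodesic reaches $\{\r=r\}$ at $\ell_r(s)=\arccos(\cos r/\cos s)$ by \eqref{ell} (independently of $\e$), which tends to $0$ as $s\uparrow r$, consistently with the term being identically $-1$ for $t>0$ once $s\ge r$. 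Therefore $t\mapsto K^\prl_{r,\e}(\r_\mu(t))\to K^\prl_{r_1,\e_1}(\r_{\mu_1}(t))$ in $L^1([0,T])$ as $\mu\to\mu_1$, and assembling the pieces---each $k_\mu$ is a sum of products of uniformly bounded factors with $L^1$-convergent factors---gives $k_\mu\to k_{\mu_1}$ in $L^1([0,T])$, which finishes the argument. The main obstacle is exactly this last point: controlling the $L^1$-behavior of the prescribed, possibly discontinuous, curvature profile composed with the geodesic radial coordinate, uniformly as $\e\to 0$ and across the degenerate locus $s=r$; everything else is a routine Gronwall estimate built on the already-established Lemma~\ref{rhocontinuous} and Proposition~\ref{Aconverge}.
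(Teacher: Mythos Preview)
Your proposal is correct and follows essentially the same route as the paper: rewrite \eqref{jacobiparallel}, \eqref{jacobiperp} as first-order systems with fixed initial data, establish that the coefficient $k_\mu$ depends continuously on $\mu$ in $L^1_{loc}([0,\infty))$, and then invoke Proposition~\ref{gronwalllinear}. The paper's proof is terser---it simply asserts the $L^1_{loc}$ convergence of $K^\|_{r,\e}\circ\rho_{s,r,\e}$ and $K_{s,r,\e}$ from Lemma~\ref{rhocontinuous} and Proposition~\ref{Aconverge}---whereas you spell out the mechanism (monotonicity of $\rho_\mu$ and continuity of the crossing time $\ell_r(s)$ for the $K^\prl$ jump, dominated convergence for the $K^\perp$ piece) and treat general $\mu\to\mu_1$ rather than only $\e\to 0$; but the underlying argument is the same.
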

\begin{proof}
First note that for all $(r,\e)\in \cI\times [0,\e_0]$,  
$\cX_\mu(t)=\sin(t)$ or $\cos(t)$ for
$(t,s)$ near $(0,0)$.  Rewrite \eqref{jacobiparallel} as the first
order system $x'=\cK x$ where
\begin{equation}\label{xdefine}
x=\begin{pmatrix}
\cX\\
\cX'
\end{pmatrix} \qquad 
\cK=\cK_\mu(t)=\begin{pmatrix} 
0&1\\
-K^\|_\nu(\rho_\mu(t))&0 
\end{pmatrix},
\end{equation}
and likewise for \eqref{jacobiperp}.  The functions $K^\prl_\nu(\r_\mu(t))$
and $K_\mu(t)$ are $C^\infty$ for 
$(t,s,r,\e)\in [0,\infty)\times [0,\infty)\times\cI\times (0,\e_0]$.  So 
$\cX_\mu(t)$ is also $C^\infty$ on this same set.  
The functions $K^\prl_{r,0}(\r_{s,r,0}(t))$ and $K_{s,r,0}(t)$ 
are piecewise $C^\infty$ in $(t,s,r)$ with a jump discontinuity across
$t=\ell_r(s)$.  So Proposition~\ref{linearprop} implies that  
$\cX_{s,r,0}(t)$ is also piecewise $C^\infty$ with a
jump discontinuity in second derivatives across $t=\ell_r(s)$.  
Recall from Lemma~\ref{rhocontinuous} that $\rho_{s,r,\e}$ 
and $\rho_{s,r,\e}'$ are continuous in $\e$ at $\e=0$.   
So $K^\|_{r,\e}\circ\rho_{s,r,\e}- K^\|_{r,0}\circ\rho_{s,r,0}\to 0$, 
$K_{s,r,\e}- K_{s,r,0}\to 0$ in        
$L^1_{loc}([0,\infty))$ locally uniformly in $(s,r)$.  Thus
Proposition~\ref{gronwalllinear} implies 
that $x_{s,r,\e}(t)\to x_{s,r,0}(t)$ uniformly on compact subsets of   
$[0,\infty)\times [0,\infty)\times \cI$.  
\end{proof}

Next we analyze continuity of higher derivatives of $\r_\mu$,
including $s$-derivatives.  It will suffice for our needs to restrict
attention to $s$ small, say $s\in [0,s_0]$ for $s_0>0$ small and fixed (as
above).  Set $\cR=([0,\infty)\times [0,s_0])\setminus \{(0,0)\}$.    For 
$\e=0$, the problem \eqref{rhoreduce} falls into the framework of
Proposition~\ref{smooth} with the surface $S$ given by $\rho=r$, so  
$T
% \big((s,0,r)^T\big)
\begin{pmatrix}s\\0\\r\end{pmatrix}=\ell_r(s)
=\arccos\left(\frac{\cos r}{\cos s}\right)$.
Proposition~\ref{c1} shows that    
$\rho_{s,r,0}(t)$ and $\rho_{s,r,0}'(t)$ are $C^1$ functions of 
$(t,s,r)\in \cR\times \cI$ and Proposition~\ref{smooth} implies that
$\rho_{s,r,0}(t)$ restricts to a $C^\infty$ function of $(t,s,r)$ on each
of $(\cR\times \cI)\cap \{0\leq t\leq \ell_r(s)\}$ and 
$(\cR\times \cI)\cap \{t\geq \ell_r(s)\}$. 

\begin{proposition}\label{rhoconverge}
Let $k,l\geq 0$.  If $k+l\leq 2$, then as $\e\to 0$,
$\p_s^k\p_t^l\rho_{s,r,\e}(t)$ converges to the corresponding function    
evaluated at $\e=0$, uniformly on compact subsets of 
$\cR\times \cI$.  If $k+l=3$ and $k<3$, then
$\p^k_s\p_t^l\rho_{s,r,\e}(t)$ converges to the corresponding function
evaluated at $\e =0$ 
uniformly on compact subsets of 
$(\cR \times \cI)\setminus\{t= \ell_r(s)\}$.   
\end{proposition}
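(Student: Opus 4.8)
The plan is to localize in $t$ about the single crossing time $\ell_r(s)=\arccos(\cos r/\cos s)$ of \eqref{ell}, where the coefficient $K^{\|}_{r,0}$ jumps, and to shift the time variable so the crossing sits at a fixed time. For $t\le\ell_r(s)$ one has $\rho_{s,r,\e}(t)=\arccos(\cos s\cos t)$ for \emph{every} $\e\ge 0$ by \eqref{rhotsmall} (on $\{\rho\le r\}$ the coefficient in \eqref{rhoequation} is $\e$-independent, cf. \eqref{Aformula}), so near any point of $\cR\times\cI$ with $t<\ell_r(s)$ every derivative of $\rho_{s,r,\e}$ is smooth and independent of $\e$ and the claim is immediate. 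For $t\ge\ell_r(s)$ I set $\tau=t-\ell_r(s)$ and let $R_\e(\tau)=R_{s,r,\e}(\tau)$ be the solution of \eqref{rhoequation} (with $\cA=\cA_{r,\e}$, $t$ renamed $\tau$) with the Cauchy data $R(0)=r$, $\partial_\tau R(0)=p(s,r):=\sqrt{\cos^2s-\cos^2r}/\sin r$ of \eqref{rhoprime} prescribed on $\{\rho=r\}$. After shrinking $s_0$ so that $s\le s_0<\pi/4-\eta\le r$ throughout, $(s,r)\mapsto p(s,r)$ is $C^\infty$ with $0<c_0\le p(s,r)<1$, and since $\partial_\tau R_\e$ is non-decreasing and stays $<1$ one has $\partial_\tau R_\e\ge c_0$ for all $\tau\ge0$; also $\ell_r(s)$ is bounded below by a positive constant, so we remain away from the polar singularity $(t,s)=(0,0)$. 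Because $\ell_r$ is $C^\infty$ and $\e$-independent, the chain rule writes $\partial_s^k\partial_t^l\rho_{s,r,\e}(t)$ on $\{t\ge\ell_r(s)\}$ as a fixed polynomial in the $\partial_s^{k'}\partial_\tau^{l'}R_\e$ ($k'\le k$, $l'\le l$) at $\tau=t-\ell_r(s)$, with coefficients built from $\partial_s^j\ell_r$; it therefore suffices to prove the analogous convergence for $R_\e$ on $\tau$-compacta in $[0,\infty)$, uniformly in $(s,r)\in[0,s_0]\times\cI$, together with continuity across $t=\ell_r(s)$ of the $\e=0$ limits when $k+l\le2$.

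Write $b_{r,\e}(\rho)=\cA_{r,\e}'(\rho)/\cA_{r,\e}(\rho)$, so $b_{r,\e}'=-K^{\|}_{r,\e}-b_{r,\e}^2$ and $b_{r,\e}''=-(K^{\|}_{r,\e})'-2b_{r,\e}b_{r,\e}'$, with $(K^{\|}_{r,\e})'$ supported in $[r,r+\e]$ and of size $O(1/\e)$ there by \eqref{eq3}. By Proposition~\ref{Aconverge} and positivity of $\cA_{r,\e}$, $b_{r,\e}$ and $b_{r,\e}^2$ converge uniformly on $\rho$-compacta, while $K^{\|}_{r,\e}$ is bounded by $1$ and converges to $K^{\|}_{r,0}$ in $L^1_{\mathrm{loc}}$. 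From $R_\e\to R_0$ in $C^1$ (Proposition~\ref{gronwallnonlinear}, with $\cF_\e\to\cF_0$ uniformly on compacta, a uniform Lipschitz constant, and $\e$-independent data) and $\partial_\tau R_\e\ge c_0$, the layer $\{\tau:R_\e(\tau)\in[r,r+\e]\}$ has length $O(\e)$ and off it $K^{\|}_{r,\e}(R_\e)$ equals $K^{\|}_{r,0}(R_0)$ once $\e$ is small, so $K^{\|}_{r,\e}(R_\e(\tau))\to K^{\|}_{r,0}(R_0(\tau))$ in $L^1_{\mathrm{loc}}$ and uniformly on each $\{\tau\ge\delta\}$. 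Now $\partial_s R_\e$ and $\partial_s^2 R_\e$ solve the first and second variational equations of \eqref{rhoreduce}; since the $r$-component of the state is constant, the $\partial_r$-entries of $D\cF_\e$ and $D^2\cF_\e$ multiply a vanishing component and drop out, leaving effective $2\times2$ systems whose coefficient matrix involves only $b_{r,\e}(R_\e)$, $b_{r,\e}'(R_\e)=-K^{\|}_{r,\e}(R_\e)-b_{r,\e}(R_\e)^2$ and $\partial_\tau R_\e$ --- uniformly bounded and $L^1_{\mathrm{loc}}$-convergent. As the Cauchy data at $\tau=0$ are $\e$-independent, Proposition~\ref{gronwalllinear} gives uniform convergence on $\tau$-compacta of $R_\e$, $\partial_\tau R_\e$, $\partial_s R_\e$, $\partial_s\partial_\tau R_\e$; then $\partial_\tau^2 R_\e$ follows from \eqref{rhoequation}, whereas $\partial_\tau^3 R_\e$ and $\partial_s\partial_\tau^2 R_\e$ involve $b_{r,\e}'(R_\e)$, hence $K^{\|}_{r,\e}(R_\e)$, undifferentiated, and converge uniformly only on $\{\tau\ge\delta\}$ --- after transfer, this is exactly why the $k+l=3$ statement is restricted away from $t=\ell_r(s)$.

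The one genuinely delicate quantity is $\partial_s^2 R_\e$: its forcing has the form $h_\e=-(K^{\|}_{r,\e})'(R_\e)\,\phi_\e+(\text{bounded, }L^1_{\mathrm{loc}}\text{-convergent})$ with $\phi_\e=(1-(\partial_\tau R_\e)^2)(\partial_s R_\e)^2$, and since $(K^{\|}_{r,\e})'=O(1/\e)$ on the $O(\e)$-wide layer, $h_\e$ is \emph{bounded} but not convergent in $L^1_{\mathrm{loc}}$. The fix is to use $(K^{\|}_{r,\e})'(R_\e)=(\partial_\tau R_\e)^{-1}\partial_\tau[K^{\|}_{r,\e}(R_\e)]$ (valid by transversality, $\partial_\tau R_\e(0)=p(s,r)\in(0,1)$) and integrate by parts:
\[
-\int_0^\tau(K^{\|}_{r,\e})'(R_\e)\,\phi_\e\,d\tau'
=-\Big[\tfrac{\phi_\e}{\partial_\tau R_\e}\,K^{\|}_{r,\e}(R_\e)\Big]_0^\tau
+\int_0^\tau K^{\|}_{r,\e}(R_\e)\,\partial_{\tau'}\!\Big(\tfrac{\phi_\e}{\partial_\tau R_\e}\Big)d\tau'.
\]
The boundary term at $\tau'=0$ vanishes because $\partial_s R_\e(0)=\partial_s r=0$, so $\phi_\e(0)=0$; moreover $\phi_\e(\tau)=O(\tau^2)$ \emph{uniformly in $\e$} (as $|\partial_s\partial_\tau R_\e|$ is uniformly bounded), which neutralizes the non-uniformity of $K^{\|}_{r,\e}(R_\e)$ near $\tau=0$ and makes the whole boundary term converge uniformly up to $\tau=0$ to a function continuous there, while the remaining integral pairs the $L^1$-convergent bounded $K^{\|}_{r,\e}(R_\e)$ with the uniformly convergent $\partial_{\tau'}(\phi_\e/\partial_\tau R_\e)$. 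Hence $\int_0^\tau h_\e$ converges uniformly on $\tau$-compacta; rewriting the second variational system with this primitive in place of the forcing, a Gronwall estimate (Proposition~\ref{gronwalllinear}, again $\e$-independent data) gives $\partial_s^2 R_\e\to\partial_s^2 R_0$ and $\partial_s^2\partial_\tau R_\e\to\partial_s^2\partial_\tau R_0$ uniformly on $\tau$-compacta, $\tau=0$ included. Transferring back, $\partial_s^2\rho_{s,r,\e}$ converges uniformly on all of $\cR\times\cI$ (the case $(k,l)=(2,0)$), whereas $\partial_s^2\partial_t\rho_{s,r,\e}$, $\partial_s\partial_t^2\rho_{s,r,\e}$, $\partial_t^3\rho_{s,r,\e}$ converge uniformly only away from $t=\ell_r(s)$, since the chain rule couples $\partial_s^j\ell_r$ to the $R_\e$-quantities that still carry $K^{\|}_{r,\e}(R_\e)$ undifferentiated. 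Continuity across $t=\ell_r(s)$ of the $\e=0$ limits for $k+l\le2$ follows by differentiating the $C^1$-matching relation of Proposition~\ref{c1} in $s$ and using that $\partial_t^2\rho_{s,r,0}$ is already continuous there (because $b_{r,0}$ is continuous at $\rho=r$, $\cA_{r,0}$ being $C^1$ and positive; the jump of $K^{\|}_{r,0}$ enters only through $b_{r,0}'$, hence only through the order-$3$ derivatives).

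The step I expect to be the main obstacle is exactly this $O(1/\e)$ blow-up of $(K^{\|}_{r,\e})'$ in the transition layer, which prevents any direct Gronwall comparison for the second variational equation; once it is handled, everything else reduces to careful but routine uniform-boundedness and $L^1$-convergence bookkeeping for first-order linear systems. What rescues the argument are two structural facts: the identity $(K^{\|}_{r,\e})'(R_\e)=(\partial_\tau R_\e)^{-1}\partial_\tau[K^{\|}_{r,\e}(R_\e)]$ (which needs transversality of the crossing) and the vanishing $\partial_s R_\e(0)=0$ of the first $s$-variation at the crossing; together they convert the singular forcing into one with a uniformly convergent primitive.
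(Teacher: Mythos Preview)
Your argument is correct and complete, but it takes a genuinely different route from the paper's proof.

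The paper's strategy is to observe the geometric identity
\[
\cA_{r,\e}(\rho_\mu(t))\,\partial_s\rho_\mu(t)=\sin(s)\,\cU^{\prl}_\mu(t),
\]
which says that $\cA\,\partial_s\rho$ is (a scalar multiple of) a tangential Jacobi field. This reduces control of $\partial_s\rho$ and $\partial_s^2\rho$ to control of $\cU^{\prl}_\mu$ and of $\partial_s\cU^{\prl}_\mu$. For the latter the paper introduces $y=\partial_s x-\frac{\partial_s\rho}{\rho'}\cK x$ and checks, using $\rho'\partial_s\cK=(\partial_s\rho)\cK'$, that $y$ solves $y'=\cK y+f$ with $f=-\big(\tfrac{\partial_s\rho}{\rho'}\big)'\cK x$. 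The point is that neither $\cK$ nor $f$ contains $(K^{\|})'$, so a direct Gronwall comparison applies and no integration by parts is needed. Your approach instead shifts time by the $\e$-independent crossing time $\ell_r(s)$, obtaining $\e$-independent Cauchy data at the hypersurface $\rho=r$, and then attacks the variational equations head-on; the $(K^{\|}_{r,\e})'$ singularity in the second variation is tamed by your integration-by-parts trick, which works precisely because $\partial_s R_\e(0)=0$ (the first variation vanishes at the crossing). Both arguments ultimately exploit a structural cancellation that prevents the derivative of the coefficient from entering at the decisive order; the paper encodes it in the substitution $y$, you encode it in a vanishing boundary term. The paper's route has the advantage that the same $y$-device is recycled verbatim in the next proposition on stable Jacobi fields, whereas your time-shift is specific to the $\rho$-equation; on the other hand, your argument is entirely self-contained and does not rely on recognising the Jacobi-field identity.

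One small slip to fix in your write-up: in the chain-rule reduction you assert the transferred derivatives satisfy $k'\le k$ and $l'\le l$, but in fact $\partial_s^2\rho$ already involves $\partial_\tau^2 R$ (so $l'=2>l=0$). The correct constraint is $k'\le k$ and $k'+l'\le k+l$. This does not affect your proof, since you do establish uniform convergence of all $\partial_s^{k'}\partial_\tau^{l'}R_\e$ with $k'+l'\le 2$ (and the order-$3$ ones away from $\tau=0$), which is exactly what the transfer needs.
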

\begin{proof}
The convergence for $k=0$, $0\leq l\leq 2$ is a specialization of  
Lemma~\ref{rhocontinuous}.  The stated convergence of $\rho_\mu'''$ follows 
upon differentiating \eqref{rhoequation} with respect to $t$.   

We claim that   
\begin{equation}\label{rho1}
\cA_\nu(\rho_\mu(t))\p_s\rho_\mu(t)
=\sin(s)\cU^\prl_\mu(t) 
\end{equation}
on $\cR\times \cI\times [0,\e_0]$.  To see this, one verifies directly via
the chain rule 
and the differential equations satisfied by $\cA$ and $\rho$ that     
$\cA_\nu(\rho_\mu(t))\p_s\rho_\mu(t)$ is a solution (weak
solution if $\e=0$) to \eqref{jacobiparallel}.  (For  
$\e =0$, recall that $\rho_{s,r,0}(t)$ and $\rho_{s,r,0}'(t)$ are $C^1$
functions of $(t,s,r)$.)  Now \eqref{rho1} is easily checked directly for 
$t$ near $0$ and $s\in [0,s_0]$, where we have explicit formulas for all 
involved quantities.  So the two sides are solutions of the same
differential equation which agree for $t$ small; hence they are equal.   

We use \eqref{rho1} to reduce the study of $\p_s\rho_\mu(t)$ to 
the study of $\cU^\prl_\mu(t)$.   As for the factor 
$\cA_\nu(\rho_\mu(t))$, Proposition~\ref{Aconverge} and
Lemma~\ref{rhocontinuous} 
imply that $\cA_\nu(\rho_\mu(t))\to \cA_{r,0}(\rho_{s,r,0}(t))$ and 
$\Big(\cA_\nu(\rho_\mu(t))\Big)'\to \Big(\cA_{r,0}(\rho_{s,r,0}(t))\Big)'$  
uniformly on compact subsets of $\cR\times \cI$.  So we deduce from
\eqref{rho1} and Lemma~\ref{Xcontinuous}  
that $\p_s\rho_\mu(t)\to \p_s\rho_{s,r,0}(t)$ and  
$\p_s\rho_\mu'(t)\to \p_s\rho_{s,r,0}'(t)$ uniformly on compact
subsets of $\cR\times \cI$.  
The differential equation \eqref{jacobiparallel} implies that   
$\cU^\prl_{r,s,\e}{}''\to \cU^\prl_{r,s,0}{}''$ uniformly on compact subsets
of $\big([0,\infty)\times [0,s_0]\times \cI\big)\setminus 
\{t=\ell_r(s)\}$.  Since 
$\Big(\cA_\nu(\rho_\mu(t))\Big)''\to
\Big(\cA_{r,0}(\rho_{s,r,0}(t))\Big)''$ 
uniformly on compact subsets of
$(\cR\times \cI)\setminus \{t=\ell_r(s)\}$, it follows also that 
$\p_s\rho_\mu''(t)\to \p_s\rho_{s,r,0}''(t)$ uniformly on compact 
subsets of $(\cR\times \cI)\setminus \{t=\ell_r(s)\}$.  

It remains to analyze $\p_s^2\rho_\mu(t)$ and
$\p_s^2\rho'_\mu(t)$, which we will do by  
differentiating \eqref{rho1} with respect to $s$.  Begin by considering 
$\p_s\cU^\prl_\mu$.  The equation for $\cU^\prl_\mu$ reduces 
to a first order system as in \eqref{xdefine} with $\cX=\cU^\prl_\mu$.
Define $y:=\p_s x - \frac{\p_s\rho}{\rho'}\cK x$ on 
$(0,\infty)\times [0,s_0]\times \cI\times [0,\e_0]$.  
We claim first that when $\e>0$, $y$ solves the equation  
\begin{equation}\label{yeq}
y'=\cK y +f(t),\qquad \text{where}\quad 
f(t)=-\left(\frac{\p_s\rho}{\rho'}\right)'\cK x.
\end{equation}
To see this, note that the chain rule implies  
$\rho'\p_s\cK=(\p_s\rho)\cK'$.  Then \eqref{yeq} follows by direct  
calculation:  
\begin{equation}\label{ycalc}
\begin{split}
y'=&\p_sx'-\left(\frac{\p_s\rho}{\rho'}\right)'\cK x
-\frac{\p_s\rho}{\rho'}\cK' x-\frac{\p_s\rho}{\rho'}\cK x'\\
=&\p_s(\cK x) +f(t) -(\p_s\cK) x -\frac{\p_s\rho}{\rho'}\cK^2 x\\  
=&\cK \p_s x +f(t) -\frac{\p_s\rho}{\rho'}\cK^2 x = \cK y +f(t).
\end{split}
\end{equation}
If $\e=0$, the same calculation leads to the same conclusion, but with all 
derivatives interpreted in the sense of distributions in $(s,t)$ near 
$t=\ell_r(s)$.  In particular, for $\e=0$, $y$ 
is a weak solution of \eqref{yeq}, so is continuous across 
$t=\ell_r(s)$.  The value $y(t)$ for $t$ small is independent of
$r$, $\e$.  Application of Proposition~\ref{gronwalllinear} therefore  
shows that $y_{s,r,\e}\to y_{s,r,0}$ uniformly on compact subsets of 
$(0,\infty)\times [0,s_0]\times \cI$.  The
first component of 
$y$ is $\p_s\cU_\mu^\prl - \frac{\p_s\rho}{\rho'}\cU_\mu^\prl{}'$.   
We know that  $\cU^\prl_\mu{}'\to\cU^\prl_{s,r,0}{}'$ uniformly on compact 
subsets of $[0,\infty)\times [0,s_0]\times \cI$ by
Lemma~\ref{Xcontinuous}.  So 
$\p_s\cU^\prl_\mu\to\p_s\cU^\prl_{s,r,0}$ uniformly on compact subsets of 
$(0,\infty)\times [0,s_0]\times \cI$; hence uniformly on compact subsets of  
$[0,\infty)\times [0,s_0]\times \cI$.  The second component of $y$ is    
$\p_s\cU_\mu^\prl{}'+\frac{\p_s\rho}{\rho'}(K^\|\circ\rho) \cU_\mu^\prl$.
It follows that   
$\p_s\cU^\prl_\mu{}'\to \p_s \cU^\prl_{s,r,0}{}'$ uniformly on compact subsets of 
$\big((0,\infty)\times [0,s_0]\times \cI\big) \setminus \{t= \ell_r(s)\}$;
hence uniformly on compact subsets of 
$\big([0,\infty)\times [0,s_0]\times \cI\big) \setminus \{t= \ell_r(s)\}$.    

Since
$\p_s\big(\cA_\nu(\rho_\mu(t))\big)=\cA_\nu'(\rho_\mu(t))\p_s\rho_\mu(t)$ 
converges uniformly on compact subsets of $\cR\times \cI$ to 
the corresponding expression evaluated at $\e=0$, applying $\p_s$ to
\eqref{rho1} shows that $\p_s^2\rho$ converges uniformly on compact subsets
of $\cR\times \cI$ as claimed.  Finally, 
one verifies easily via the chain rule and what we have already established
that  $\p_s\p_t\big(\cA_\nu(\rho_\mu(t))\big)$ converges
uniformly on compact subsets of $\{t\neq \ell_r(s)\}$.  So applying
$\p_s\p_t$ to \eqref{rho1} shows that $\p_s^2\rho'$ does too.   
\end{proof}

We remark that it is easily seen from the arguments above that when
$k+l=3$ and $k<3$, even though $\p_s^k\p_t^l\rho_\mu(t)$ is not 
uniformly convergent near $\{t=\ell_r(s)\}$ as $\e\to 0$, it is  
uniformly bounded near this set.  

Next consider behavior as $t\to \infty$.  
\begin{lemma}\label{rhoinfinity}
For $t$ large, $\rho_\mu$ can be written in the form 
\begin{equation}\label{rhotlarge}
\rho_\mu(t) = t + F(e^{-t},s,r,\e) 
\end{equation}
for a function $F$ satisfying 
$F\in C^{\infty}([0,1]\times [0,s_0]\times \cI\times (0,\e_0])$    
and $F|_{\e = 0}\in C^{\infty}([0,1]\times [0,s_0]\times \cI)$.  
\end{lemma}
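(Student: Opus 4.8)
The plan is to observe that, for $t$ large, $\rho_\mu(t)$ lies in the region $\rho>r+\e$, where $\cA_\nu(\rho)=a_+e^\rho+a_-e^{-\rho}$ is explicit, and there to convert the second order geodesic equation \eqref{rhoequation} into a \emph{regular} first order ODE for $w_\mu(t):=\rho_\mu(t)-t$ in the time variable $\xi=e^{-t}$.  The statement then follows from smooth dependence of ODE solutions on parameters and initial data, the point being that the reduced equation has no singularity as $\xi\to 0$, so its solutions are smooth up to $\xi=0$.

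I would start from a first integral of \eqref{rhoequation}: using \eqref{rhoequation} one checks that $t\mapsto\big(1-\rho_\mu'(t)^2\big)\cA_\nu(\rho_\mu(t))^2$ has vanishing $t$-derivative (when $\e=0$ this holds for $t\neq\ell_r(s)$, and the quantity is continuous, hence constant).  Evaluating at $t=0$ and using $\cA_\nu(s)=\sin s$ for $s\le s_0$ gives $\rho_\mu'(t)=\sqrt{1-\sin^2(s)/\cA_\nu(\rho_\mu(t))^2}$ for $t\ge 0$ (the nonnegative root, valid since $\rho_\mu$ is nondecreasing; the case $s=0$ where $\rho_\mu(t)=t$ is consistent).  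Using the bound $\rho_\mu(t)\ge a^{-1}\log\cosh(at)$ with $a>0$ as established above, I would fix $T_0>0$, depending only on $s_0,\e_0,\eta$, large enough that $\rho_\mu(T_0)>r+\e+1$ and $T_0>\ell_r(s)$ for all $\mu$ in the range; for $t\ge T_0$ we then have $\rho_\mu(t)>r+\e+1$, so by \eqref{Aformula}, $\cA_\nu(\rho_\mu(t))=a_+e^{\rho_\mu(t)}+a_-e^{-\rho_\mu(t)}$ with $a_\pm=a_\pm(r,\e)$.

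Substituting $\rho_\mu(t)=t+w_\mu(t)$, $\xi=e^{-t}$, and using $\rho_\mu'=1-\xi\,\partial_\xi w_\mu$ together with $\cA_\nu(\rho_\mu)=\xi^{-1}(a_+e^{w_\mu}+a_-\xi^2e^{-w_\mu})$, the equation for $\rho_\mu'$ rearranges to
\[
\partial_\xi w_\mu=\Phi(\xi,w_\mu;s,r,\e),\qquad
\Phi(\xi,w;s,r,\e):=\frac{\sin^2(s)\,\xi}{\big(a_+e^{w}+a_-\xi^2e^{-w}\big)^2\Big(1+\sqrt{1-\dfrac{\sin^2(s)\,\xi^2}{(a_+e^{w}+a_-\xi^2e^{-w})^2}}\Big)}\,.
\]
One checks that $w_\mu$ stays in a fixed bounded interval $I_w$, uniformly in $\mu$: $w_\mu$ is nonincreasing, so $w_\mu(t)\le w_\mu(0)=s\le s_0$ for $t\ge 0$, while a uniform lower bound for $t\ge T_0$ follows from $\rho_\mu'\ge 1-Ce^{-2\rho_\mu}$.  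Since $a_+(r,\e)\ge a_{\min}>0$ and $|a_-|$ is bounded on the compact set $\cI\times[0,\e_0]$, the function $\Phi$ is $C^\infty$ on an open neighborhood of $[0,\xi_1]\times I_w$, $\xi_1:=e^{-T_0}$, jointly in $(\xi,w)$ and in the parameters: smoothly in $(r,\e)$ when $\e>0$ (where $a_\pm(r,\e)$ are smooth since $\cA_{r,\e}$ is) and, with $\e=0$, smoothly in $r$ (where $a_\pm(r,0)=\tfrac12 e^{\mp r}(\sin r\pm\cos r)$).  In particular $\Phi$ is smooth up to, and past, $\xi=0$.

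Finally, I would recover $w_\mu$ by integrating this ODE on $[0,\xi_1]$ backward from $\xi_1$ with initial value $w_\mu(T_0)=\rho_\mu(T_0)-T_0$; this value is smooth in $\mu$ (for $\e>0$ because $\rho_\mu$ is smooth everywhere, and for $\e=0$ because $\rho_{s,r,0}$ is smooth in $(t,s,r)$ on $\{t>\ell_r(s)\}$ by Propositions~\ref{smooth} and \ref{c1} and $T_0>\ell_r(s)$).  Smooth dependence of the ODE flow on the initial condition and on the parameters then yields a function $F(\xi,s,r,\e)$, $C^\infty$ on $[0,\xi_1]\times[0,s_0]\times\cI\times(0,\e_0]$ and with $F|_{\e=0}$ $C^\infty$ on $[0,\xi_1]\times[0,s_0]\times\cI$, satisfying $\rho_\mu(t)=t+F(e^{-t},s,r,\e)$ for $t\ge T_0$; that $F$ is smooth at $\xi=0$ is exactly the regularity of the $\xi$-ODE there.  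Extending $F$ smoothly in $\xi$ from $[0,\xi_1]$ to $[0,1]$ (e.g.\ by a Seeley extension, carried out separately for $\e>0$ and for $\e=0$) gives the lemma.  The main obstacle I anticipate is the regularity bookkeeping near $\e=0$: one must take $T_0$ past the corner $t=\ell_r(s)$ so that $\rho_{s,r,0}(T_0)$ is genuinely smooth in $(s,r)$, and must keep the regimes $\e>0$ and $\e=0$ rigorously separate throughout, since $F$ is not smooth across $\e=0$; the remaining routine-but-necessary point is the uniform confinement of $w_\mu$ to one fixed compact interval, so that $\Phi$ is smooth on a single fixed neighborhood.
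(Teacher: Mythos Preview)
Your argument is correct and is a genuinely different route from the paper's.  The paper proves the lemma by invoking an external result (Lemma~2.7 of \cite{2017arXiv170905053G}) about the geodesic flow of a general non-trapping asymptotically hyperbolic metric, which gives $u(\pi(\fe_t(z)))=e^{-t}E(e^{-t},z)$ with $E$ smooth up to the boundary at infinity; it then passes to $\rho=-\log u$ and tracks the smooth dependence on $(a_+,a_-)$ and on the time-$T$ data $\fe_T(z_s)$.  You instead exploit the rotational symmetry directly: the Clairaut-type first integral $(1-\rho'^2)\cA(\rho)^2\equiv\sin^2 s$ reduces \eqref{rhoequation} to a first order equation, and the substitution $\rho=t+w$, $\xi=e^{-t}$ turns it into a \emph{regular} scalar ODE $\partial_\xi w=\Phi(\xi,w;s,r,\e)$ on $[0,\xi_1]$, whose right-hand side is smooth up to and across $\xi=0$.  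Smoothness of $F$ up to $\xi=0$ then follows from standard smooth dependence on initial data and parameters, once you check (as you do) that $w_\mu$ stays in a fixed compact $w$-interval, that $a_\pm$ are smooth in $(r,\e)$ for $\e>0$ and in $r$ for $\e=0$, and that the initial value $\rho_\mu(T_0)-T_0$ is smooth in the parameters (which requires taking $T_0>\ell_r(s)$ and, for the smoothness in $s$ down to $s=0$, starting the flow not at $t=0$ but at a small positive time where the explicit spherical formula gives smooth data).  Your approach is more elementary and self-contained in this warped-product setting; the paper's approach is less explicit but would apply without symmetry.  The Seeley extension at the end is harmless but not really needed: the lemma only concerns $t$ large, i.e.\ $\xi$ small, so having $F$ smooth on $[0,\xi_1]$ already suffices.
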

\begin{proof}
It must be shown that the function $F$ defined by \eqref{rhotlarge} has the
stated regularity properties for $t$ large.    
The geodesic flow $\fe_t:S^*M\to S^*M$ of an asymptotically hyperbolic
metric $g$ was analyzed in \cite{2017arXiv170905053G}.  The proof of Lemma
2.7 of \cite{2017arXiv170905053G} shows 
that if $g$ is smooth and non-trapping and $u$ is a defining function for
infinity, then for 
$t\geq 0$ one can write $u(\pi(\fe_t(z))) =  e^{-t}E(e^{-t},z)$ for a
smooth positive function $E$ on $[0,1]\times S^*M$.  Here $\pi:S^*M\to M$ 
is the projection.  Note that under the change of 
variable $u=e^{-\r}$, for $\cA$ of the form \eqref{Aformula} the metric  
$g$ becomes 
$g=g^{a_+,a_-}=u^{-2}\big( du^2+ (a_++a_-u^2)^2\mathring{g}\big)$ in a
neighborhood $\cU$ of $u=0$.  First let $a_\pm$ be fixed and set 
$\widetilde{\cU}_{a_+,a_-}:=\{z\in S_{g^{a_+,a_-}}^*M:
\pi(\fe_t(z))\in \cU \text{ for all } t\geq 0\}$.    
It follows that $\rho(\pi(\fe_t(z)))=-\log u(\pi(\fe_t(z)))=t +P(e^{-t},z)$
where $P=-\log E\in C^\infty([0,1]\times \widetilde{\cU}_{a_+,a_-})$.  
To incorporate the parameters $a_\pm$, let $\mathbf{A}$ denote the set of
$(a_+,a_-)$ which arise as $(r,\e)$ varies over $\cI\times [0,\e_0]$, set  
$\mathcal{S}:=\{(a_+,a_-,z):(a_+,a_-)\in \mathbf{A}, z\in 
\widetilde{\cU}_{a_+,a_-}\}\subset \R^2\times T^*M$, 
and view $P$ as defined on $[0,1]\times \mathcal{S}$.   
The argument of the proof of Lemma 2.7 of \cite{2017arXiv170905053G} carries over
to this setting and establishes that $P$ is smooth on 
$[0,1]\times\mathcal{S}$. 

Fix $T$ large; for $t>T$ we have 
\begin{equation}\label{Ftlarge}
F(e^{-t},s,r,\e)=P\big(e^{-(t-T)}, a_+(r,\e),a_-(r,\e),\fe_T(z_s)\big) 
\end{equation}
where $z_s$ is the point (independent of $(r,\e)$) in $T^*M$
corresponding to the initial data for $\ga_\mu$ and $\fe$ denotes the
geodesic flow of $g_{r,\e}$.  
Now $a_+$, $a_-$ are $C^\infty$ functions of $r$, $\e$ for $\e>0$, and
are $C^\infty$ functions of $r$ when $\e=0$.  Likewise, $\ga_\mu(t)$ 
is $C^\infty$ in all variables for $\e>0$, and Proposition~\ref{smooth} 
implies that $\ga_{s,r,0}(t)$ is $C^\infty$ in $(s,r,t)$ for $t$ large.  
The conclusion follows.
\end{proof}

It is easily verified that for $\cA= a_+e^\rho + a_-e^{-\rho}$, one has   
\begin{equation}\label{kperpasy}
K^\perp(\rho) = -1 + e^{-2\rho}G(e^{-2\rho},a_+,a_-)
\end{equation}
with $G\in C^\infty([0,1]\times \mathbf{A})$, where, as in the proof of 
Lemma~\ref{rhoinfinity}, $\mathbf{A}$ is the 
set of all $(a_+,a_-)\in \R^2$ which arise for 
$(r,\e)\in \cI\times [0,\e_0]$.  Substituting
\eqref{rhotlarge}, \eqref{kperpasy} into \eqref{KY} and recalling that
$K^\prl_\nu(\rho_\mu(t))$ is identically $-1$ for $t$ large show that for
$t$ large,  
\begin{equation}\label{Kasy}
K_{s,r,\e}(t) = -1 +e^{-t}H(e^{-t},s,r,\e)
\end{equation}
where
\begin{equation}\label{H}
H(e^{-t},s,r,\e)=
\left(2\p_vF-e^{-t}(\p_vF)^2\right) e^{-2\r_\mu(t)}
G\big(e^{-2\r_\mu(t)},a_+(r,\e),a_-(r,\e)\big).
\end{equation}
Here $v=e^{-t}$ is the first argument of $F$ and $\p_vF$ is evaluated at 
$(e^{-t},s,r,\e)$.  The function $H$ clearly satisfies the same conditions
that $F$ satisfied in Lemma~\ref{rhoinfinity}:  
$H\in C^{\infty}([0,1]\times [0,s_0]\times \cI\times (0,\e_0])$   
and $H|_{\e = 0}\in C^{\infty}([0,1]\times [0,s_0]\times \cI)$. 

Problem 29, p. 104 of \cite{MR0069338} shows that 
there is a unique solution $\cY^\perp_\mu(t)$ to \eqref{jacobiperp} for $t$
large for which $\lim_{t\to\infty}e^t\cY(t)=1$.  Moreover, it is not hard
to show that the reasoning in the outlined solution of the cited problem 
in \cite{MR0069338} shows that $\cY^\perp_\mu(t)$ has the 
same regularity in the parameters as $K_{s,r,\e}$:   
$\cY^\perp_\mu\in C^{\infty}([T,\infty)\times [0,s_0]\times \cI\times
  (0,\e_0])$   
and $\cY^\perp_{s,r,0}\in C^{\infty}([T,\infty)\times [0,s_0]\times \cI)$
for some large $T$.  For $\e>0$, $\cY^\perp_\mu$ extends to $t\geq 0$ as a
solution with 
$\cY^\perp_\mu\in C^{\infty}([0,\infty)\times [0,s_0]\times \cI \times
  (0,\e_0])$.  For $\e=0$ we can apply  
Proposition~\ref{linearprop} backwards in time with initial data at $t=T$
to conclude that $\cY^\perp_{s,r,0}$ extends to $t\geq 0$ as a  
weak solution of \eqref{jacobiperp}, which is $C^1$ and piecewise $C^\infty$ in
$(t,s,r)$, with a jump in second derivatives across $t=\ell_r(s)$.   

Since $K^\prl_\nu(\rho_\mu(t))$ is identically $-1$ for $t$ large 
uniformly for $(s,r,\e)\in [0,s_0]\times \cI\times [0,\e_0]$, there is  
a unique solution (weak solution if $\e=0$) $\cY^\prl_\mu(t)$ to
\eqref{jacobiparallel} which equals $e^{-t}$ for $t$ large.  This solution 
$\cY^\prl_\mu$ extends backwards to $[0,\infty)$ with the same regularity
properties as $\cY^\perp_\mu(t)$.  

\begin{proposition}\label{prop:second_derivatives}
Let $0\leq l\leq 1$, $0\leq k\leq 2$ and let $\cY_\mu$ be either 
$\cY^\prl_\mu$ or $\cY^\perp_\mu$.  
As $\e\to 0$, $\p_s^k\p_t^l\cY_{s,r,\e}(t)\to \p_s^k\p_t^l\cY_{s,r,0}(t)$
uniformly on compact subsets of $[0,\infty)\times [0,s_0]\times \cI$ for
$0\leq k+l\leq 1$, and uniformly on compact subsets of  
$\big([0,\infty)\times [0,s_0]\times \cI\big) 
\setminus \{t=\ell_r(s)\}$ for $2\leq k+l\leq 3$.      
\end{proposition}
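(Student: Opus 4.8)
The plan is to pass to the first-order systems for $x_\mu=(\cY_\mu,\cY_\mu')$ attached to \eqref{jacobiparallel} and \eqref{jacobiperp} and to control the stable solution by propagating \emph{backwards} from $t=\infty$. Fix $T$ large. On $[T,\infty)$ the relevant quantities solve smooth ODE: for \eqref{jacobiparallel} the coefficient is identically $-1$, so $\cY^\prl_\mu(t)=e^{-t}$ and the claim is trivial there; for \eqref{jacobiperp}, by Lemma~\ref{rhoinfinity}, \eqref{Kasy}--\eqref{H} and Proposition~\ref{Aconverge}, the geodesic, the curvature and hence $\cY^\perp_\mu$ (constructed as in Problem~29 of \cite{MR0069338}) are smooth functions of the parameters with $\p_s^k\p_t^l\cY^\perp_\mu$ converging to the $\e=0$ value on compacta of $[T,\infty)$ for all the orders in question. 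At the other end, choose $t_1\in(0,\inf\{\ell_r(s):0\le s\le s_0,\ r\in\cI\})$; on $[0,t_1]$ the geodesics $\ga_\mu$ stay inside the geodesic ball $B_r(0)$, so the coefficients in \eqref{jacobiparallel} and \eqref{jacobiperp} are identically $1$ and \emph{independent of $\e$}, whence $\p_s^k\p_t^l\cY_\mu$ converges uniformly on $[0,t_1]$ once it converges at the single time $t_1$ (which lies off $\{t=\ell_r(s)\}$). It therefore remains to treat $t\in[t_1,T]$.

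On $[t_1,T]$ we follow the template of Lemma~\ref{Xcontinuous} and the proof of Proposition~\ref{rhoconverge}. The coefficient $\cK_\mu(t)$ of the first-order system converges to its $\e=0$ value in $L^1_{loc}$ (Lemma~\ref{rhocontinuous}, Proposition~\ref{Aconverge}), and one checks, using the angular-momentum conservation law $\rho_\mu'(t)^2=1-\sin^2\!s/\cA_\nu(\rho_\mu(t))^2$ (valid since $s\le s_0<r$, so $\cA_\nu(s)=\sin s$) together with \eqref{KY} and \eqref{eq:Kperp}, that the $s$-dependence of $\cK_\mu$ enters only through $\rho_\mu(t)$ and the smooth factor $\sin^2\!s$; consequently $\p_s\cK_\mu=\psi_\mu\,\p_t\cK_\mu+\cE_\mu$ where $\psi_\mu:=\p_s\rho_\mu/\rho_\mu'$ and $\cE_\mu$ is bounded and converges in $L^1_{loc}$ (for $\cY^\prl$ one has $\cE_\mu=0$). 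For $k=0$ (which also covers $l=1,\,k=0$) one applies Proposition~\ref{gronwalllinear} backwards from $t=T$, using $L^1_{loc}$-convergence of the coefficient and convergence of the terminal data from the first paragraph. For $k=1$ one introduces, exactly as in Proposition~\ref{rhoconverge}, the variable $y_\mu:=\p_s x_\mu-\psi_\mu\cK_\mu x_\mu$; it solves $y'=\cK y+f$ where $f$ involves only $\cK_\mu$ (bounded, with a jump across $t=\ell_r(s)$) and never $\p_t\cK_\mu$, hence $f$ converges in $L^1_{loc}$; Proposition~\ref{gronwalllinear} then gives convergence of $y_\mu$, and unwinding $\p_s x_\mu=y_\mu+\psi_\mu\cK_\mu x_\mu$ yields $\p_s\cY_\mu$ on all compacta and $\p_s\cY_\mu'$ on compacta off $\{t=\ell_r(s)\}$.

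The essential difficulty is $k=2$. Differentiating the $y$-equation in $s$ and trying the obvious further substitution $\p_s y_\mu-\psi_\mu\cK_\mu y_\mu$ leaves a forcing term proportional to $\psi_\mu\psi_\mu'\,(\p_t\cK_\mu)\,x_\mu$; since $\p_t\cK_\mu$ carries the $O(1/\e)$ spike supported on the transition shell shrinking to $\{t=\ell_r(s)\}$, this term does \emph{not} converge in $L^1_{loc}$, and the Gronwall estimate fails precisely when one tries to cross the shell into the interior region $\{t<\ell_r(s)\}$. The remedy is the \emph{doubly corrected} variable
\[
z_\mu:=\p_s y_\mu-\psi_\mu\cK_\mu y_\mu+\psi_\mu\psi_\mu'\,\cK_\mu x_\mu ,
\]
whose last term is tailored so that it simultaneously (i) cancels the offending $\p_t\cK_\mu$ contribution, giving $z'=\cK z+g$ with $g$ converging in $L^1_{loc}$ \emph{across} $\{t=\ell_r(s)\}$, and (ii) renders $z_\mu$ continuous across $t=\ell_r(s)$ --- the jumps cancel precisely because differentiating $\rho_\mu(\ell_r(s))=r$ in $s$ gives $\ell_r'(s)=-\psi_\mu(s,\ell_r(s))$, the same type of identity used to verify continuity in Proposition~\ref{c1}. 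Granting (i) and (ii), Proposition~\ref{gronwalllinear} applied backwards from $t=T$ (where $z_\mu(T)$ converges by the first paragraph) gives uniform convergence of $z_\mu$ on $[t_1,T]$; unwinding, and invoking convergence of $\p_s^k\p_t^l\rho_\mu$ up to order $3$ together with their uniform boundedness near $\{t=\ell_r(s)\}$ from Proposition~\ref{rhoconverge} and the remark after it, produces $\p_s^2\cY_\mu$ and $\p_s^2\cY_\mu'$, uniformly convergent on compacta off $\{t=\ell_r(s)\}$ --- the restriction to that set being forced because the unwinding reintroduces $\p_t\cK_\mu$, which converges only where the coefficient is smoothly convergent.

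The main obstacle is thus the second $s$-derivative: identifying the correct combination of $\p_s^2 x_\mu$, $\p_s x_\mu$ and $x_\mu$ whose forcing is $L^1_{loc}$-convergent \emph{uniformly through} the vanishing transition shell, and verifying the two cancellations --- in the forcing and in the jump of $z_\mu$ --- made possible by $\ell_r'(s)=-\psi_\mu(s,\ell_r(s))$. For $\cY^\perp$ the same scheme applies, with the extra, benign terms coming from the $\sin^2\!s$ dependence of the coefficient (they involve only bounded, $L^1_{loc}$-convergent factors); all remaining bookkeeping is a routine assembly of Gronwall estimates and chain rules on top of Propositions~\ref{gronwalllinear}, \ref{rhoconverge}, \ref{Aconverge}, Lemmas~\ref{rhocontinuous}, \ref{Xcontinuous}, and the $t\to\infty$ analysis of Lemma~\ref{rhoinfinity}.
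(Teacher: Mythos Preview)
Your strategy matches the paper's: reduce to first-order systems, handle $[T,\infty)$ via the explicit asymptotics, and propagate backwards using Gronwall with the corrected variables $y_\mu=\p_s x_\mu-\psi_\mu\cK_\mu x_\mu$ and a doubly-corrected $z_\mu$ at the $k=2$ level. For $\cY^\prl_\mu$ (where indeed $\cE_\mu=0$) your $z_\mu=\p_s y_\mu-\psi_\mu\cK_\mu y_\mu+\psi_\mu\psi_\mu'\cK_\mu x_\mu$ coincides with the paper's choice and the argument goes through.

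There is, however, a genuine gap in the $\cY^\perp_\mu$, $k=2$ step. Your claim that the extra terms coming from $\cE_\mu$ are ``benign'' is correct at the $y$-level but fails one level higher: in the forcing $g$ for your $z_\mu$, the term $\p_s f$ contains $(\p_s\cE_\mu)x_\mu$, and with your angular-momentum formulation $\cE_\mu=\sin(2s)\,G(\rho_\mu)$ where $G(\rho)=\cA^{-4}(1-(\cA')^2)+\cA^{-3}\cA''$, one has $\p_s\cE_\mu=\psi_\mu\,\p_t\cE_\mu+\text{(benign)}$. But $\p_t\cE_\mu$ involves $G'(\rho_\mu)$, and since $G$ contains $\cA''=-K^\|_\nu\cA$, the derivative $G'$ carries the same $O(1/\e)$ spike from $(K^\|_\nu)'$ that you worked so hard to cancel elsewhere. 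Your correction $\psi_\mu\psi_\mu'\cK_\mu x_\mu$ only kills the $-\psi'\psi\cK' x$ contribution arising from the $-\psi'\cK x$ part of $f$; it does nothing for the $\psi_\mu(\p_t\cE_\mu)x_\mu$ piece. Hence $g$ fails to converge in $L^1_{loc}$ across the shell, and correspondingly your $z_{s,r,0}$ is \emph{not} continuous across $t=\ell_r(s)$ (so claim (ii) also breaks), and Proposition~\ref{gronwalllinear} cannot be applied. The remedy is to add one more correction, $-\psi_\mu\cE_\mu x_\mu$, to your $z_\mu$; its $t$-derivative produces $-\psi_\mu(\p_t\cE_\mu)x_\mu$, cancelling the residual spike and simultaneously restoring the jump cancellation needed for continuity at $\e=0$. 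This is exactly the paper's extra term $+\frac{\p_s\rho}{\rho'}\kappa A x$ (with $\kappa A=-\cE$), and with it your scheme becomes the paper's.
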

\begin{proof}
First we claim that for 
$0\leq k\leq 2$, $0\leq l\leq 1$, and for fixed large $T$,  
$\p_s^k\p_t^l\cY_{s,r,\e}(t)\to \p_s^k\p_t^l\cY_{s,r,0}(t)$ as $\e\to 0$
uniformly on $[T,\infty)\times[0,s_0]\times \cI$.  This is clear for
$\cY^\prl_\mu$ since $\cY^\prl_\mu(t)=e^{-t}$ for $t$ large.  For
$\cY^\perp_\mu$ this follows from the same argument in \cite{MR0069338}
proving the existence of $\cY^\perp_\mu$ if we establish that the function
$H$ in \eqref{Kasy} satisfies that for $0\leq k\leq 2$, 
$0\leq l\leq 1$ and $t$ large, $\p_s^k\p_t^l\big(H(e^{-t},s,r,\e)\big)$   
is uniformly bounded and continuous in $\e$ up to $\e=0$.  
Recall that $F$ is given by \eqref{Ftlarge}.  Since the
equation \ref{rhoequation} for $\r$ decouples in the equations for the geodesic flow for
$g_{r,\e}$, it is not hard to see that the argument of Lemma 2.7 of 
\cite{2017arXiv170905053G} cited in the proof of Lemma~\ref{rhoinfinity} 
applies directly to the $\r$ equation so that in \eqref{Ftlarge},
$\fe_T(z_s)$ (which amounts to $(\r,\r',\th,\th')$), can be replaced by
only $(\r_\mu(T),\r_\mu'(T))$ on the right hand side.  Since $P$ and  
$G$ are smooth, the uniform boundedness and continuity in $\e$ of 
$\p_s^k\p_t^l\big(H(e^{-t},s,r,\e)\big)$ for $t$ large follow upon
using \eqref{Ftlarge} to express $F$ in terms of $P$ in \eqref{H}, 
successively differentiating \eqref{H}, applying the chain rule, and
recalling Proposition~\ref{rhoconverge}.   

Now we use the same sort of argument as in Proposition~\ref{rhoconverge},
but backwards in time.  
We write the rest of the proof for $\cY_\mu=\cY_\mu^\perp$; the argument 
for $\cY_\mu^\prl$ is similar.  Reduce \eqref{jacobiperp} to a first 
order system $x'=\cK x$, where 
\[
x=\begin{pmatrix}
\cY\\
\cY'
\end{pmatrix} \qquad 
\cK_{s,r,\e}(t)=\begin{pmatrix}
0&1\\
-K_{s,r,\e}(t)&0
\end{pmatrix}
\]
with $K_{s,r,\e}$ defined by \eqref{KY}.  Our previous results imply that 
$K_{s,r,\e}\to K_{s,r,0}$ in $L^1([0,T])$, so 
Proposition~\ref{gronwalllinear} applied backwards in time with initial
condition at $t=T$ shows that $x_{s,r,\e}(t)\to x_{s,r,0}(t)$ uniformly on 
$[0,T]\times [0,s_0]\times \cI$.  So the convergence also holds uniformly
on $[0,\infty)\times [0,s_0]\times \cI$.  This proves the result for 
$k=0$, $0\leq l\leq 1$.  

Define $y:=\p_s x - \frac{\p_s\rho}{\rho'}\cK x$ as in the proof of
Proposition~\ref{rhoconverge}.  This time the chain rule gives 
\begin{equation}\label{K}
\frac{\p_s\rho}{\rho'} \cK' = \p_s\cK + \kappa_{s,r,\e}(t)A,
\end{equation}
where
\[
\kappa = 2\big(\rho'\p_s\rho'-(\p_s\rho) \rho''\big) 
\big(K^\|\circ\rho-K^\perp\circ\rho\big),\qquad A=\begin{pmatrix} 
0&0\\1&0\end{pmatrix}. 
\]
So the calculation analogous to 
\eqref{ycalc} via the chain rule shows that 
\begin{equation}\label{yprime}
y'=\cK y +f(t)
\end{equation}
with 
\begin{equation}\label{f}
f=-\left(\frac{\p_s\rho}{\rho'}\right)'\cK x
-\kappa Ax, 
\end{equation}
and again the equation holds weakly across $t=\ell_r(s)$ when
$\e=0$.  Since $\cK_{s,r,\e}- \cK_{s,r,0}\to 0$ and 
$f_{s,r,\e}- f_{s,r,0}\to 0$ in $L^1_{loc}((0,\infty))$ as $\e\to 0$,   
Proposition~\ref{gronwalllinear} implies that 
$y_{s,r,\e}\to y_{s,r,0}$ uniformly on compact subsets of 
$(0,\infty)\times[0,s_0]\times \cI$.  Consideration of the first component
shows that $\p_s\cY_{s,r,\e}(t)\to \p_s\cY_{s,r,0}(t)$ uniformly on compact
subsets of $(0,\infty)\times[0,s_0]\times \cI$ and consideration of the
second component shows that 
$\p_s\cY_{s,r,\e}'(t)\to \p_s\cY_{s,r,0}'(t)$ uniformly on compact
subsets of 
$\big((0,\infty)\times [0,s_0]\times \cI\big)\setminus \{t=\ell_r(s)\}$.
Since $K_\mu(t)=1$ for $t$ small uniformly in $(s,r,\e)$, the differential
equation \eqref{jacobiperp} implies that the uniform convergence    
extends down to $t=0$. This proves the result for $k=1$, $0\leq l\leq 1$.    

For $k=2$, set 
\begin{equation}\label{z}
z=\p_sy-\frac{\p_s\rho}{\rho'}\cK y
+\left(\frac{\p_s\rho}{\rho'}\right)'
\frac{\p_s\rho}{\rho'} \cK x
+\frac{\p_s\rho}{\rho'} \kappa Ax.   
\end{equation} 
We claim that $z'=\cK z +{h}(t)$, where ${h}(t)={h}_{s,r,\e}(t)$ is given by 
\[
\begin{split}
{h}(t)=&-\left(\frac{\p_s\rho}{\rho'}\right)'
\frac{\p_s\rho}{\rho'} \cK^2 x 
-\frac{\p_s\rho}{\rho'} \kappa Bx
+\cK\left[\left(\frac{\p_s\rho}{\rho'}\right)'
\frac{\p_s\rho}{\rho'} x \right]'
-\left(\frac{\p_s\rho}{\rho'}\right)'\cK y \\
&-\kappa Ay
-\frac{\p_s\rho}{\rho'}\cK f
-\p^2_{st}\left(\frac{\p_s\rho}{\rho'}\right)\cK x
-\left(\frac{\p_s\rho}{\rho'}\right)'\cK \p_sx
-\kappa A\p_sx\\
&-2\Big[\big(\p_s-\frac{\p_s\rho}{\rho'}\p_t\big)
\big(\rho'\p_s\rho'-(\p_s\rho)\rho''\big)\Big]
(K^\|\circ\rho-K^\perp\circ\rho) Ax\\ 
&+2\left(\frac{\p_s\rho}{\rho'}\right)'\kappa Ax
+\frac{\p_s\rho}{\rho'}\kappa Ax' 
\end{split}
\]
and we have set $B=\begin{pmatrix} 1&0\\0&0\end{pmatrix}$.   
Given the claim, the proof is concluded by the same sort of reasoning as
above.  Note that our previous results imply that 
${h}_{s,r,\e}\to {h}_{s,r,0}$ uniformly on compact subsets of  
$\big((0,\infty)\times [0,s_0]\times \cI\big)\setminus 
\{t=\ell_r(s)\}$,    
and ${h}_{s,r,\e}$ is uniformly bounded on compact subsets of 
$(0,\infty)\times [0,s_0]\times \cI$.  So $\cK$ and $h$ converge in 
$L^1_{loc}((0,\infty))$.  Thus Proposition~\ref{gronwalllinear} 
shows that $z_{s,r,\e}\to z_{s,r,0}$ uniformly on compact subsets of 
$(0,\infty)\times [0,s_0]\times \cI$.  According to \eqref{z}, 
$z-\p_sy$ is the sum of three terms, each of which converges uniformly on
compact subsets of 
$\big((0,\infty)\times [0,s_0]\times \cI\big)\setminus 
\{t=\ell_r(s)\}$.  So 
$\p_sy$ also converges uniformly on compact subsets of 
$\big((0,\infty)\times [0,s_0]\times \cI\big)
\setminus \{t=\ell_r(s)\}$.
And $\p_s y-\p_s^2x=-\p_s\Big(\frac{\p_s\rho}{\rho'}\cK x\Big)$ 
converges uniformly on compact subsets of 
$\big((0,\infty)\times [0,s_0]\times \cI\big)
\setminus \{t=\ell_r(s)\}$, so $\p_s^2x$ does too.  Again the differential
equation \eqref{jacobiperp} implies that the uniform convergence extends
to $t=0$.   

The proof that $z'=\cK z +{h}(t)$ is a calculation similar to  
\eqref{ycalc}, \eqref{yprime} but involving more terms.  Differentiate 
\eqref{z} with respect to $t$, expand the differentiations
using the Leibnitz rule, substitute \eqref{yprime} for the two
occurrences of $y'$ and \eqref{K} for the two occurrences of 
$\frac{\p_s\rho}{\rho'} \cK'$ on the right-hand side, and collect terms.
One obtains   
\[
\begin{split}
z'=&\cK\left(\p_s y -\frac{\p_s\rho}{\rho'}\cK y\right) +\p_s f
-\left(\frac{\p_s\rho}{\rho'}\right)'\cK y
-\ka A y 
-\frac{\p_s\rho}{\rho'}\cK f \\
&+\left(\frac{\p_s\rho}{\rho'}\right)'(\p_s\cK)x 
+2\left(\frac{\p_s\rho}{\rho'}\right)'\ka Ax
+\cK\left[\left(\frac{\p_s\rho}{\rho'}\right)'\frac{\p_s\rho}{\rho'}x\right]' \\
&+\frac{\p_s\rho}{\rho'}\ka'A x
 +\frac{\p_s\rho}{\rho'}\ka Ax'.  
\end{split}
\]
Now substitute 
\[
\p_s y -\frac{\p_s\rho}{\rho'}\cK y = z
-\left(\frac{\p_s\rho}{\rho'}\right)'
\frac{\p_s\rho}{\rho'} \cK x
-\frac{\p_s\rho}{\rho'} \kappa Ax
\]
from \eqref{z} in the first term on the right-hand side, expand
$\p_sf$ by differentiating \eqref{f}, and compare terms to obtain   
\[
\begin{split}
z'&-\cK z - h(t) \\
=&\left[\left(\frac{\p_s\rho}{\rho'} \ka'-\p_s\ka\right)
+2\Big[\big(\p_s-\frac{\p_s\rho}{\rho'}\p_t\big)
\big(\rho'\p_s\rho'-(\p_s\rho)\rho''\big)\Big]
(K^\|-K^\perp)\circ\rho\right]Ax.   
\end{split}
\]
Finally, observe that the right-hand side vanishes.    
\end{proof}

It is easily checked that for $\mu_0=(0,\pi/4,0)$ the decaying solution 
$\cY_{\mu_0}^\prl=\cY_{\mu_0}^\perp=:\cY_{\mu_0}$ is given by
$\cY_{0,\pi/4,0}(t)=
\begin{cases} \sqrt{2}e^{-\pi/4}\cos(t), & 0\leq t\leq \pi/4\\ 
e^{-t},& t\geq \pi/4
\end{cases}$.  
Since $\cY_{\mu_0}(0)>0$, it follows from continuity that 
$\cY_{\mu}^\prl(0)>0$, $\cY_\mu^\perp(0)>0$ for all $\mu$ sufficiently
close to $\mu_0$.  For such $\mu$ we define 
$\cW_\mu^\prl(t)=\cY_\mu^\prl(t)/\cY_\mu^\prl(0)$ and 
$\cW_\mu^\perp(t)=\cY_\mu^\perp(t)/\cY_\mu^\perp(0)$
so that $\cW_\mu^\prl$, $\cW_\mu^\perp$ are the decaying solutions
satisfying $\cW_\mu(0)=1$.  These $\cW_\mu$ inherit the continuity
properties of $\cY_\mu$ stated in
Proposition~\ref{prop:second_derivatives}.   

In the sequel we will need the following lemma.
\begin{lemma}\label{Knegative}
There exists $T>0$ so that
$K_\nu^\prl(\r_\mu(t))<0$ and $K_\mu(t)<0$ for 
$(t,s,r,\e)\in [T,\infty)\times [0,\infty)\times \cI\times [0,\e_0]$.    
\end{lemma}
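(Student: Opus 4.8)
The plan is to reduce both inequalities to the statement that the radial and normal sectional curvatures $K^\prl_{r,\e}(\r)$ and $K^\perp_{r,\e}(\r)$ are uniformly negative for $\r$ large, and then invoke the uniform lower bound $\r_\mu(t)\ge a^{-1}\log\cosh(at)$ recorded just before Lemma~\ref{rhocontinuous}, which forces $\r_\mu(t)\to\infty$ uniformly in $\mu\in[0,\infty)\times\cI\times[0,\e_0]$. For the parallel curvature there is essentially nothing to prove: by \eqref{eq3} one has $K^\prl_{r,\e}(\r)=-1$ as soon as $\r\ge r+\e$ (for $\e>0$ this is where $\fe(\tfrac{\r-r}{\e})=1$, and for $\e=0$ it is where $\r>r$), and $r+\e\le \pi/4+\eta+\e_0$ throughout the parameter range. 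So once $\r_\mu(t)$ exceeds $\pi/4+\eta+\e_0$ we get $K^\prl_\nu(\r_\mu(t))=-1<0$.

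For the normal curvature I would use \eqref{kperpasy}: on $\{\r\ge r+\e\}$, where $\cA_{r,\e}(\r)=a_+e^\r+a_-e^{-\r}$, we have $K^\perp_{r,\e}(\r)=-1+e^{-2\r}G(e^{-2\r},a_+(r,\e),a_-(r,\e))$ with $G\in C^\infty([0,1]\times\mathbf{A})$. By Proposition~\ref{Aconverge} the coefficients $a_\pm$ depend continuously on $(r,\e)$ over the compact set $\cI\times[0,\e_0]$, so the image set $\mathbf{A}$ is compact and $G$ is bounded there, say $|G|\le C_G$. Hence $K^\perp_{r,\e}(\r)\le -1+C_Ge^{-2\r}$ for all $\r\ge r+\e$ and all $(r,\e)\in\cI\times[0,\e_0]$, which is strictly negative once $\r>\tfrac12\log C_G$.

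Putting these together: fix $\r_0>\max\{\pi/4+\eta+\e_0,\ \tfrac12\log C_G\}$, so that for every $\r\ge\r_0$ and every $(r,\e)\in\cI\times[0,\e_0]$ one has $K^\prl_{r,\e}(\r)=-1<0$ and $K^\perp_{r,\e}(\r)<0$. Then choose $T$ with $a^{-1}\log\cosh(aT)\ge\r_0$, so that $\r_\mu(t)\ge\r_0$ for all $t\ge T$ and all $\mu$. The first claimed inequality is then immediate. For the second, note that since $\ga_\mu$ has unit speed and $\p_\r$ is a unit vector one has $(\r_\mu'(t))^2\in[0,1]$, so \eqref{KY} exhibits $K_\mu(t)$ as a convex combination of $K^\prl_\nu(\r_\mu(t))<0$ and $K^\perp_\nu(\r_\mu(t))<0$, hence $K_\mu(t)<0$. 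The only point requiring any care is the uniformity of these estimates in the parameters $(r,\e)$, which is exactly what the continuity of $a_\pm$ (Proposition~\ref{Aconverge}), the compactness of $\mathbf{A}$, the smoothness of $G$, and the $\mu$-uniform lower bound on $\r_\mu(t)$ provide; there is no serious obstacle.
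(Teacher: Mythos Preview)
Your proof is correct and follows the same strategy as the paper: reduce to showing $K^\prl_\nu(\r)<0$ and $K^\perp_\nu(\r)<0$ for $\r\ge\r_0$ uniformly in $\nu$, then invoke the uniform lower bound $\r_\mu(t)\ge a^{-1}\log\cosh(at)$ and the convex-combination formula \eqref{KY}. The only difference is in how $K^\perp_\nu(\r)<0$ is verified: the paper argues directly from \eqref{eq:Kperp} that $\cA_\nu'(\r)>1$ for large $\r$ (using continuity of $a_\pm$ to pick $\r_0$ with $\cA_\nu'(\r_0)>1$, then $\cA_\nu''=\cA_\nu>0$ on $\{\r>r+\e\}$ forces $\cA_\nu'$ to stay above $1$), whereas you use the asymptotic expansion \eqref{kperpasy} and bound $G$ on the compact set $[0,1]\times\mathbf{A}$; both are short and valid.
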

\begin{proof}
Since $\r_\mu(t)\to \infty$ as $t\to \infty$ uniformly for 
$\mu=(s,r,\e)\in [0,\infty)\times \cI\times [0,\e_0]$ and $K_\mu(t)$ is a   
convex combination of $K_\nu^\|(\r_\mu(t))$ and $K_\nu^\perp(\r_\mu(t))$,
it suffices to show that there exists $\r_0$ independent of 
$\nu=(r,\e)\in \cI\times [0,\e_0]$ so that $K^\prl_\nu(\rho)<0$ and 
$K^\perp_\nu(\rho)<0$ for $\r\geq \r_0$.
For $K_\nu^\prl$ this is clear since $K_\nu^\prl(\r)=-1$ for $\r>r+\e$.    
Equation \eqref{Aformula} and the continuity of $a_\pm$ in $(r,\e)$ show
that we can choose $\r_0$ independent of  
$\nu\in \cI\times [0,\e_0]$ so that $\cA_\nu'(\r_0)>1$.  The
differential equation for $\cA_\nu$ implies that 
$\cA_\nu'(\r)\geq \cA_\nu'(\r_0)>1$ for $\r\geq \r_0$.  Then
$K_\nu^\perp(\r)=\cA_\nu^{-2}(\r)\big(1-(\cA_\nu'(\r))^2\big)<0$ as
desired. 
\end{proof}

\begin{lemma}
$\cW_\mu^\prl(t)>0$ and $\cW_\mu^\perp(t)>0$ for all $t\geq 0$ and for all
$\mu$ sufficiently near $\mu_0$.  
\end{lemma}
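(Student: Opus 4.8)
The plan is to split the ray $t\ge 0$ into a bounded part $[0,T]$ and an unbounded part $[T,\infty)$, using Lemma~\ref{Knegative} on the latter and the explicit value of $\cY_{\mu_0}$ together with Proposition~\ref{prop:second_derivatives} on the former. First I would fix $T>0$ as in Lemma~\ref{Knegative} (so $K_\nu^\prl(\r_\mu(t))<0$ and $K_\mu(t)<0$ for $t\ge T$ and all $\mu=(s,r,\e)\in[0,s_0]\times\cI\times[0,\e_0]$), enlarging $T$ if necessary so that also $\ell_r(s)<T$ for all such $(s,r)$ and $\cY^\prl_\mu(t)=e^{-t}$ for $t\ge T$. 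On $[T,\infty)$ the potential in \eqref{jacobiparallel} resp.\ \eqref{jacobiperp} is then continuous and negative, so the decaying solution $\cY_\mu$ (here $\cY_\mu$ denotes $\cY^\prl_\mu$ or $\cY^\perp_\mu$, and $\cW_\mu$ the corresponding normalization) is $C^2$ there.

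The core step is to show $\cY_\mu>0$ on $[T,\infty)$ for every $\mu$, by a convexity argument; then the same holds for $\cW_\mu$ wherever it is defined. We know $\cY_\mu(t)\to 0$ as $t\to\infty$ and $\cY_\mu(t)>0$ for $t$ large (it equals $e^{-t}$ in the $\prl$ case and $e^t\cY_\mu(t)\to 1$ in the $\perp$ case). If $\cY_\mu$ vanished somewhere in $[T,\infty)$, let $t_1\ge T$ be the largest such time; then $\cY_\mu>0$ on $(t_1,\infty)$, so by the Jacobi equation $\cY_\mu''=-K_\mu\cY_\mu>0$ on $(t_1,\infty)$, hence $\cY_\mu'$ is strictly increasing there. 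Uniqueness for the linear ODE forbids $\cY_\mu(t_1)=\cY_\mu'(t_1)=0$ (that would force $\cY_\mu\equiv 0$), and $\cY_\mu(t_1)=0$ with $\cY_\mu>0$ immediately to the right gives $\cY_\mu'(t_1)\ge 0$; thus $\cY_\mu'(t_1)>0$, so $\cY_\mu'>0$ on $(t_1,\infty)$ and $\cY_\mu$ is increasing on $(t_1,\infty)$, contradicting $\cY_\mu\to 0$. Therefore $\cY_\mu$ has no zero in $[T,\infty)$, and being positive near $+\infty$ it is positive throughout $[T,\infty)$.

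It remains to treat $t\in[0,T]$, which is compact. From the explicit formula for $\cY_{\mu_0}$ one has $\cY_{\mu_0}(t)=\sqrt 2\,e^{-\pi/4}\cos t>0$ on $[0,\pi/4]$ and $\cY_{\mu_0}(t)=e^{-t}>0$ on $[\pi/4,\infty)$, so $\cW_{\mu_0}\ge m$ on $[0,T]$ for some $m>0$. Proposition~\ref{prop:second_derivatives} with $k=l=0$ gives $\cY_{s,r,\e}(t)\to\cY_{s,r,0}(t)$ uniformly on compact subsets of $(t,s,r)$, and $\cY_{s,r,0}(t)$ is continuous in $(t,s,r)$ while $\cY_\mu(t)$ is smooth in all variables for $\e>0$; it follows that $(t,\mu)\mapsto\cY_\mu(t)$ is jointly continuous, and hence so is $(t,\mu)\mapsto\cW_\mu(t)=\cY_\mu(t)/\cY_\mu(0)$ on a neighborhood of $[0,T]\times\{\mu_0\}$ (recall $\cY_\mu(0)>0$ near $\mu_0$). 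By compactness of $[0,T]\times\{\mu_0\}$ there is a neighborhood $W$ of $\mu_0$ with $\cW_\mu(t)>m/2>0$ for all $(t,\mu)\in[0,T]\times W$. Combined with the previous step, $\cW^\prl_\mu(t)>0$ and $\cW^\perp_\mu(t)>0$ for all $t\ge 0$ and all $\mu\in W$.

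The only mildly delicate point is the joint continuity of $(t,\mu)\mapsto\cY_\mu(t)$ up to $\e=0$ used in the last paragraph, but this is the standard fact that a family of continuous functions converging uniformly on compact sets to a continuous limit is jointly continuous, which is exactly what Proposition~\ref{prop:second_derivatives} and the stated regularity of $\cY_{s,r,0}$ provide. I expect the convexity argument on $[T,\infty)$ to be the real content of the proof; the bounded part is pure compactness.
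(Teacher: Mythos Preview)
Your argument is correct, but it is organized differently from the paper's proof. You split $[0,\infty)$ into $[0,T]$ and $[T,\infty)$ and handle the two pieces separately: on $[T,\infty)$ a direct convexity argument (using $\cY_\mu''=-K\cY_\mu>0$ where $\cY_\mu>0$) rules out any last zero of the decaying solution, and on $[0,T]$ you invoke joint continuity of $\cY_\mu$ in $(t,\mu)$ and positivity of $\cY_{\mu_0}$ on the compact interval. The paper instead works with the companion solution $\cV_\mu$ ($\cV_\mu(0)=0$, $\cV_\mu'(0)=1$): it first shows $\cV_\mu>0$ on $(0,\infty)$ for $\mu$ near $\mu_0$ (again via continuity on $(0,T]$ and the differential equation for $t\ge T$), and then the Wronskian identity $\cW_\mu\cV_\mu'-\cW_\mu'\cV_\mu=1$ forces $\cW_\mu'(t_0)<0$ at any zero $t_0$ of $\cW_\mu$, which is incompatible with $\cW_\mu(0)=1$ and $\cW_\mu(t)\to 0^+$. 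Both proofs ultimately rest on Lemma~\ref{Knegative} and a continuity/compactness step; your route is slightly more elementary in that it avoids introducing $\cV_\mu$ and the Wronskian, while the paper's Wronskian trick gives a clean global obstruction (every zero of $\cW_\mu$ would have negative slope) without dissecting the time axis.
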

\begin{proof}
We suppress the superscripts ${}^\prl$, ${}^\perp$; the argument is the
same for both.  Recall the solution $\cV_\mu$ with initial conditions
$\cV_\mu(0)=0$, $\cV_\mu'(0)=1$.  The Wronskian 
$\cW_\mu\cV'_\mu -\cW_\mu'\cV_\mu =1$.  We will show below that
$\cV_\mu(t)>0$ for all $t>0$.  Given this, it follows that $\cW_\mu'(t)<0$
at every $t$ for which $\cW_\mu(t)=0$.  The vanishing of $\cW_\mu(t)$ for
any $t$ is therefore inconsistent with the fact that  
$\cW_\mu$ is asymptotic to a positive multiple of $e^{-t}$ as $t\to
\infty$.   

Now we show that $\cV_\mu(t)>0$ for all $t>0$ for $\mu$ sufficiently close  
to $\mu_0$.  $\cV_{\mu_0}$ is identified in \eqref{Vformula} (take
$\ell(s)=\pi/4$) and clearly is positive on $(0,\infty)$.  Choose $T$ as in
Lemma~\ref{Knegative}.  Continuity (from Lemma~\ref{Xcontinuous})  
and the fact that $\cV_\mu(t)=\sin (t)$ for $t$ small imply that 
there is a neighborhood of 
$\mu_0$ for which $\cV_\mu>0$ and $\cV_\mu'>0$ on $(0,T]$.
The differential equation \eqref{jacobiparallel} or \eqref{jacobiperp}
implies that $\cV_\mu>0$ on $[T,\infty)$ as desired.   
\end{proof}

\begin{proposition}\label{prop_twice_differentiable}
Let $\cW_\mu(t)$ be either $\cW_\mu^\prl(t)$ or $\cW_\mu^\perp(t)$.
Then $\p_s( \cW'_{\mu}(0))\big|_{s=0}=0$ and there exist
a neighborhood $U$ of $(\pi/4,0)$ and $\sigma>0$ such that if   
$(r,\e)\in U$ and $0\leq s\leq \sigma$, then
$\p_s^2(\cW'_{\mu}(0))<0$. 
\end{proposition}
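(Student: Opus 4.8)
The plan is to prove the two assertions separately, treating $\cW_\mu=\cW^\prl_\mu$ and $\cW_\mu=\cW^\perp_\mu$ together. For the first, the key point is that $\rho_\mu$ is even in $s$ to first order at $s=0$: differentiating \eqref{rho1} and using $\cA_\nu(\rho_\mu(t))>0$ gives $\p_s\rho_\mu(t)\big|_{s=0}=0$, and differentiating once more in $t$ gives $\p_s\rho_\mu'(t)\big|_{s=0}=0$. Writing $\kappa_\mu(t)$ for the coefficient in the scalar Jacobi equation solved by $\cW_\mu$ — namely $K^\prl_\nu\circ\rho_\mu$ in the $\prl$ case and $K_{s,r,\e}$ of \eqref{KY} in the $\perp$ case — we note that in both cases $\kappa_\mu$ depends on $s$ only through $\rho_\mu(t)$ and $\rho_\mu'(t)$, so $\p_s\kappa_\mu(t)\big|_{s=0}=0$. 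Differentiating $\cW_\mu''+\kappa_\mu\cW_\mu=0$ in $s$ and setting $s=0$ then shows that $Z(t):=\p_s\cW_\mu(t)\big|_{s=0}$ solves the homogeneous Jacobi equation along the radial geodesic $\g_{0,r,\e}$; it decays as $t\to\infty$ (for $\cW^\prl$ since $\cW^\prl_\mu$ is a constant multiple of $e^{-t}$ for $t$ large, for $\cW^\perp$ from the construction of $\cY^\perp_\mu$ recalled above), so $Z$ is a scalar multiple of the decaying solution $\cW_{0,r,\e}$; and $Z(0)=0$ since $\cW_\mu(0)\equiv1$, while $\cW_{0,r,\e}(0)=1$, forcing $Z\equiv0$ and hence $\p_s(\cW'_\mu(0))\big|_{s=0}=Z'(0)=0$. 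For $\e>0$ this is classical; the $\e=0$ case follows because, by Proposition~\ref{prop:second_derivatives}, $\p_s\p_t\cW_\mu$ is continuous in $\e$ down to $\e=0$ near $(s,t)=(0,0)$, which lies away from the jump locus $\{t=\ell_r(s)\}$ since $\ell_r(0)=r$ is near $\pi/4$.

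For the second assertion, the same continuity (Proposition~\ref{prop:second_derivatives}, once more because $t=0$ avoids $\{t=\ell_r(s)\}$) makes $(s,r,\e)\mapsto\p_s^2(\cW'_\mu(0))$ continuous near $\mu_0=(0,\pi/4,0)$, so it suffices to show $\p_s^2(\cW'_\mu(0))\big|_{\mu_0}<0$ for each of the two choices and then shrink $U,\sigma$. I would compute this from the explicit formulas for $g_{\pi/4,0}$, that is, with $(r,\e)=(\pi/4,0)$ and $0\le s<\pi/4$. In the $\prl$ case the solutions of \eqref{jacobiparallel} are $\cos t,\sin t$ on $[0,\ell(s)]$ and $e^{\pm t}$ on $[\ell(s),\infty)$, with $\ell(s)=\arccos(\sec(s)/\sqrt{2})$ by \eqref{ell}; matching the decaying branch $c\,e^{-t}$ to $A\cos t+B\sin t$ in a $C^1$ way across $t=\ell(s)$ forces $\cW^\prl_\mu{}'(0)=B/A=\tan(\ell(s)-\pi/4)$. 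Since $\cos\ell(s)=\sec(s)/\sqrt{2}=\tfrac{1}{\sqrt{2}}\big(1+\tfrac{s^2}{2}+O(s^4)\big)$, one gets $\ell(s)-\pi/4=-\tfrac{s^2}{2}+O(s^4)$, hence $\cW^\prl_\mu{}'(0)=-\tfrac{s^2}{2}+O(s^4)$ and $\p_s^2(\cW'_\mu(0))\big|_{\mu_0}=-1<0$.

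In the $\perp$ case, using \eqref{190519solutionz}--\eqref{190519solution} and $F$ as in \eqref{F}, a solution $D=a\,\cU^\perp_s+b\,\cV^\perp_s$ of \eqref{jacobiperp} equals $\tfrac{\sqrt{2}}{2}F(t,s)\big(a\csc(s)\cos\Theta(t,s)+b\sin\Theta(t,s)\big)$ for $t\ge\ell(s)$; since $F(t,s)\sim\tfrac{1+\sqrt{\cos(2s)}}{2}\,e^{t-\ell(s)}$ and $\Theta(t,s)\to\Theta_\infty(s)$ as $t\to\infty$, $D$ decays precisely when $a\cos\Theta_\infty(s)+b\sin(s)\sin\Theta_\infty(s)=0$, so that $(a,b)\propto\big(-\sin(s)\sin\Theta_\infty(s),\,\cos\Theta_\infty(s)\big)$. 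On $[0,\ell(s)]$, $D=a\cos t+b\sin t$, so $D(0)=a$, $D'(0)=b$, whence $\cW^\perp_\mu{}'(0)=b/a=-\cot\Theta_\infty(s)/\sin s$. Using $\Theta_\infty(0)=\pi/2$ and \eqref{derivTheta}, namely $\p_s\Theta_\infty(s)=-\tfrac{s^2}{2}+O(s^4)$, gives $\Theta_\infty(s)=\pi/2-\tfrac{s^3}{6}+O(s^5)$, so $\cos\Theta_\infty(s)=\tfrac{s^3}{6}+O(s^5)$ and $\sin\Theta_\infty(s)=1+O(s^6)$; hence $\cW^\perp_\mu{}'(0)=-\tfrac{s^2}{6}+O(s^4)$ — a function smooth at $s=0$, consistent with the text's remark that these solutions extend smoothly to $s=0$ — and $\p_s^2(\cW'_\mu(0))\big|_{\mu_0}=-\tfrac{1}{3}<0$. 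In both cases the value at $\mu_0$ is negative, and continuity yields the claimed $U$ and $\sigma$ (intersect those obtained for the two cases).

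The step I expect to be the main obstacle is the bookkeeping for the singular limit $\e\to0$: Proposition~\ref{prop:second_derivatives} provides convergence of $\p_s^k\p_t^l\cW_\mu$ for $2\le k+l\le 3$ only away from $\{t=\ell_r(s)\}$, so one must verify that every quantity entering the argument is evaluated at $t=0$, which stays at positive distance from that set as long as $r$ remains near $\pi/4$ (where $\ell_r(0)=r$). A secondary subtlety is the perpendicular base case: $\cos\Theta_\infty(s)$ vanishes to order three in $s$ while $\sin s$ vanishes only to order one, so the sign of $\p_s^2(\cW'_\mu(0))\big|_{\mu_0}$ hinges on carrying the expansion of $\Theta_\infty$ to third order.
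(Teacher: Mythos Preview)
Your proposal is correct and follows essentially the same approach as the paper's proof: both parts reduce to the limit case $(r,\e)=(\pi/4,0)$ via Proposition~\ref{prop:second_derivatives}, establish $\partial_s\cW_\mu|_{s=0}\equiv 0$ by showing it solves the same decaying-solution problem with zero initial value, and then compute $\partial_s^2\cW'_\mu(0)|_{\mu_0}$ from the explicit Jacobi fields, obtaining $-1$ in the $\prl$ case and $-1/3$ in the $\perp$ case. Your expression $\cW^\prl_\mu{}'(0)=\tan(\ell(s)-\pi/4)$ is algebraically identical to the paper's $-\frac{1-\sqrt{\cos(2s)}}{1+\sqrt{\cos(2s)}}$ via $\tan\ell(s)=\sqrt{\cos(2s)}$, so even the computations match.
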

\begin{proof}
For the first statement we actually show 
$\p_s(\cW_{\mu}(t))\big|_{s=0}=0$ for all $t$.  
It is clear from \eqref{rho1} that
$\p_s\r\big|_{s=0}=\p_s\r'\big|_{s=0}=0$.  
In case $\cW_\mu=\cW_\mu^\perp$, if $\e>0$ differentiation of
\eqref{jacobiperp} shows that $\p_s \cW_\mu|_{s=0}$ is also a
solution of \eqref{jacobiperp}.  This holds in the weak sense  
when $\e=0$ by the reasoning in the 
proof of Proposition~\ref{prop:second_derivatives}.  Since
$\p_s \cW_\mu|_{s=0}$ vanishes as $t\to \infty$, it must be a multiple of
$\cW_\mu$.  Since $\p_s \cW_\mu(0)|_{s=0}=0$ and $\cW_\mu(0)=1$, 
the multiple must be zero.  The same argument applies to
$\cW_\mu=\cW_\mu^\prl$ upon differentiation of
\eqref{jacobiparallel}.  

For the second statement, it suffices to show that
$\p_s^2(\cW'_{s,\pi/4,0}(0))|_{s=0}<0$ by
Proposition~\ref{prop:second_derivatives}. 
Again consider first $\cW=\cW^\perp$ and suppress writing 
${}^\perp$ on all quantities below.  For $s$ small we can write 
$\cW_s:=\cW_{s,\pi/4,0}$ as a linear combination of the solutions $\cU_s$
and $\cV_s$ 
given by \eqref{190519solutionz}, \eqref{190519solution}.  By first
considering the asymptotics as $t\to \infty$ and then the value at $t=0$,
one finds that for $s>0$ small 
\[
\cW_s = \cU_s -\csc(s)\cot(\Theta_\infty(s))\cV_s
\]
where $\Theta_\infty$ is given by \eqref{Thetainfinity}.  Hence
$\cW_s'(0)=-\csc(s)\cot(\Theta_\infty(s))$.
Evaluation of \eqref{Thetainfinity} gives $\Theta_\infty(0)=\pi/2$ and
\eqref{derivTheta} shows that $\p_s\Theta_\infty=-s^2/2+O(s^3)$.  
Thus 
$\Theta_\infty(s)= \pi/2-s^3/6 + O(s^4)$ so that
$\cot(\Theta_\infty(s))= s^3/6 +O(s^4)$.  This gives  
$\p_s^2\cW_s'(0)|_{s=0}=-1/3$ as desired.  For the second case   
$\cW=\cW^\prl$, write $\cW_s$ as a linear combination of the solutions 
\eqref{Ztildeformula} and \eqref{Vformula} and find, also using \eqref{ell},
\[
\cW_s = \cU_s -\frac{1-\sqrt{\cos(2s)}}{1+\sqrt{\cos(2s)}}\cV_s,
\]
so that $\cW_s'(0)=-\frac{1-\sqrt{\cos(2s)}}{1+\sqrt{\cos(2s)}}$.    
This time there are no indeterminants and one finds without difficulty
$\p_s^2\cW_s'(0)|_{s=0}=-1$.  
\end{proof}

We will use the next proposition to rule out interior conjugate points for
$s$ near 0.  

\begin{proposition}\label{123018proposition}
Let $f\in L^1(\R)$ be an even function and suppose $\cW$ is a $C^1$
weak solution to 
\begin{equation}\label{110318eq1}
\cW''(t)+(-1+f(t))\cW(t)=0
\end{equation}
with $\cW(t)> 0$ for $t\geq 0$ and $\lim_{t\to\infty} \cW(t)=0$.    
There are no nontrivial solutions of \eqref{110318eq1} vanishing 
at two distinct values of $t$ if and only if $\cW'(0)\leq 0$.   
\end{proposition}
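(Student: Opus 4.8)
The plan is to exploit reduction of order off the given positive solution $\cW$ together with the reflection symmetry $t\mapsto -t$ afforded by the evenness of $f$. On $[0,\infty)$, where $\cW>0$, set $\Phi(t):=\int_0^t\cW(s)^{-2}\,ds$. Since $\cW(s)\to0$, the integrand tends to $+\infty$, so $\Phi$ is a strictly increasing $C^1$ bijection of $[0,\infty)$ onto $[0,\infty)$. A direct computation (legitimate in the weak sense because $\cW\in C^1$) shows that every weak solution of \eqref{110318eq1} on $[0,\infty)$ has the form $\cW(t)\big(c_1+c_2\Phi(t)\big)$, whose zeros there are the solutions of $\Phi(t)=-c_1/c_2$; by injectivity of $\Phi$ there is at most one. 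Because $f$ is even, $t\mapsto\cW(-t)$ is again a $C^1$ weak solution of \eqref{110318eq1}, positive on $(-\infty,0]$, so the same argument bounds by one the number of zeros on $(-\infty,0]$. Consequently a nontrivial solution with two distinct zeros must vanish at some $t_1<0<t_2$, and the task reduces to deciding when such a solution exists.

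Next I would split according to the sign of $\cW'(0)$ (well defined since $\cW\in C^1$ and $\cW(0)>0$). If $\cW'(0)=0$, then $\cW(t)$ and $\cW(-t)$ are weak solutions agreeing to first order at $t=0$, hence equal, so $\cW$ is even and positive on all of $\R$; reduction of order on all of $\R$ then exhibits every solution as $\cW(t)\big(c_1+c_2\Phi(t)\big)$ with $\Phi$ an increasing bijection of $\R$, so no nontrivial solution vanishes more than once. This is consistent with $\cW'(0)\le0$.

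If $\cW'(0)\neq0$, then $\cW(t)$ and $\overline{\cW}(t):=\cW(-t)$ are linearly independent (their Wronskian at $0$ equals $-2\cW(0)\cW'(0)\neq0$), so every solution is $\psi=\alpha\cW+\beta\overline{\cW}$. Matching initial data at $t=0$ in the representation above gives $\overline{\cW}(t)=\cW(t)\big(1-2\cW(0)\cW'(0)\Phi(t)\big)$ for $t\ge0$, hence $\psi(t)=\cW(t)\big[(\alpha+\beta)-2\beta\cW(0)\cW'(0)\Phi(t)\big]$ for $t>0$; using surjectivity of $\Phi$, $\psi$ has a (necessarily unique) zero in $(0,\infty)$ iff $\beta\neq0$ and $\tfrac{\alpha+\beta}{2\beta\cW(0)\cW'(0)}>0$. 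Applying this to $\psi(-t)=\alpha\overline{\cW}(t)+\beta\cW(t)$ (i.e.\ swapping $\alpha\leftrightarrow\beta$), $\psi$ has a zero in $(-\infty,0)$ iff $\alpha\neq0$ and $\tfrac{\alpha+\beta}{2\alpha\cW(0)\cW'(0)}>0$. A short sign analysis shows these two inequalities hold simultaneously for some nontrivial $\psi$ exactly when $\cW'(0)>0$: if $\cW'(0)>0$, one checks directly that the even solution $\psi(t)=\cW(t)+\cW(-t)$ vanishes at $\pm t_\ast$ with $\Phi(t_\ast)=(\cW(0)\cW'(0))^{-1}$; conversely, multiplying the two inequalities forces $\alpha\beta>0$, whence $\alpha,\beta$ have a common sign, and then equating the sign of $\alpha+\beta$ with that of $\beta\cW'(0)$ forces $\cW'(0)>0$. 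Combining the cases, a nontrivial solution vanishing at two distinct points exists iff $\cW'(0)>0$, which is the contrapositive of the assertion.

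I do not expect a serious conceptual obstacle here — the result is a Sturm-type statement — but two points need care: justifying reduction of order and the reflection $t\mapsto-t$ at the level of $C^1$ weak solutions (routine, via uniqueness for the Carath\'eodory system $\psi'=v$, $v'=(1-f)\psi$ with $f\in L^1$), and making sure the surjectivity of $\Phi$ onto $[0,\infty)$ is invoked correctly — this is where $\cW(t)\to0$ is used, giving divergence of $\int\cW^{-2}$ for free. The one genuinely load-bearing hypothesis is that $f$ is even: it is what makes the zero sets on the two sides of $0$ symmetric and thereby turns the argument into an "iff".
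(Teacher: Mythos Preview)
Your argument is correct and takes a genuinely different route from the paper. The paper argues as follows: for the forward direction it shows that if $\cW'(0)\le 0$ then $\cW$ itself (or $\cW(|t|)$ when $\cW'(0)=0$) never vanishes on $\R$ and invokes the Sturm Separation Theorem; the case $\cW'(0)<0$ is handled by a short contradiction using the odd solution $\cV(t)=\cW(t)-\cW(-t)$. For the converse the paper shows that the even solution $\cU$ with $\cU(0)=1$, $\cU'(0)=0$ must vanish when $\cW'(0)>0$, by observing that otherwise $0<\cU<\cW$ on $(0,\infty)$ would force every solution to decay at $+\infty$, contradicting the existence of an exponentially growing solution (cited from Coddington--Levinson).

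Your approach replaces both Sturm tools by explicit reduction of order off $\cW$ (and off $\cW(-\cdot)$ on the negative half-line), yielding an exact parametrization of all zeros through the bijection $\Phi$. This makes the argument entirely self-contained: you never appeal to Sturm Separation, and you avoid the external existence result for growing solutions by instead using $\cW\to 0$ to force $\Phi$ onto $[0,\infty)$. You also obtain the zero of the even solution $\cW+\cW(-\cdot)$ constructively at $\Phi^{-1}\big((\cW(0)\cW'(0))^{-1}\big)$, whereas the paper's proof of its existence is indirect. The trade-off is that the paper's proof is a bit shorter and stays closer to classical Sturm theory, while yours is more explicit and requires tracking the sign bookkeeping carefully; both the weak-solution justification and the divergence of $\int\cW^{-2}$ are, as you note, routine.
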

\begin{proof}
	First assume that $\cW'(0)\leq 0$.  By the Sturm Separation
        Theorem, all solutions of \eqref{110318eq1} vanish at most once if
        there         exists one solution of \eqref{110318eq1} that never
        vanishes, so it is enough to show that $\cW(t)\neq 0$ for all $t$.
        If $\cW'(0)=0$ 
        then $\cW(|t|)$ is a non-vanishing $C^1$ solution of
        \eqref{110318eq1}, so suppose $\cW'(0)<0$ and let $t_0<0$ be such
        that $\cW(t_0)=0$ for the sake of contradiction.  Define
        $\cV:\R\to\R$ by $\cV(t)=\cW(t)-\cW(-t)$.  Then $\cV$ satisfies   
        \eqref{110318eq1} by evenness of $f$.  Since $\cV(-t_0)>0$, %
        $\cV(0)=0$ and $\cV'(0)<0$, there exists $0<t_1<-t_0$ with
        $\cV(t_1)=0$.  This contradicts the Sturm Separation Theorem, since
        $\cW$ and $\cV$ are linearly independent and $\cW>0$ on $[0,-t_1]$.

	For the converse, suppose that no solutions of \eqref{110318eq1}
        vanish twice and $\cW'(0)>0$.  We can normalize to assume
        $\cW(0)=1$.  Let $\cU$ denote the solution of
        \eqref{110318eq1} with $\cU(0)=1$ and $\cU'(0)=0$, which is even by
        evenness of $f$.  We claim that $\cU$ vanishes for some positive
        $t$, hence twice.  Suppose this is not the case, i.e. $\cU>0$ on
        $\R$.
	We have $0<\cU(t)<\cW(t)$ for all $t>0$; otherwise the function  
        $\cW-\cU$ would vanish at least twice on $[0,\infty)$. 
	We conclude that $\lim_{t\to\infty}\cU(t)=0$ and hence all
        solutions of \eqref{110318eq1} decay as $t\to \infty$. 
	This is a contradiction:  Problem 29 in p. 104 of \cite{MR0069338}
        implies that there are solutions of \eqref{110318eq1} which grow
        exponentially as $t\to\infty$. 
\end{proof}

Finally we can prove Propositions~\ref{exist}, \ref{noicpssmall} and 
\ref{noicpslarge}.  

\begin{proof}[Proof of Proposition~\ref{exist}]
It is easy to check that for $g_{r,0}$ the decaying solution 
on radial geodesics is given for $t\geq 0$ by 
\[
\cY_{0,r,0}(t)=
\begin{cases}
e^{-r}\big(\cos(t-r)-\sin(t-r)\big) & \quad t\leq r\\
e^{-t} & \quad t\geq r 
\end{cases}.
\]
(Since $\cY^\prl_{0,r,\e}=\cY^\perp_{0,r,\e}$, we
suppress the ${}^\prl$.)  So  
$\cY_{0,r,0}'(0)=e^{-r}\big(\sin(r)-\cos(r)\big)$.  If
$r_1<\pi/4<r_2$, then $\cY_{0,r_1,0}'(0)<0<\cY_{0,r_2,0}'(0)$ and we can 
choose $r_1$ and $r_2$ as close to $\pi/4$ as we like.  Continuity 
(from Proposition~\ref{prop:second_derivatives}) implies
that if $\e$ is small enough, then also 
$\cY_{0,r_1,\e}'(0)<0<\cY_{0,r_2,\e}'(0)$.  The mean 
value theorem gives the existence of $r$, $r_1<r<r_2$, with
$\cY_{0,r,\e}'(0)=0$.   
\end{proof}

\begin{proof}[Proof of Proposition~\ref{noicpssmall}]
Choose $\sigma$ and $U$ so that the conclusion $\p_s^2(\cW'_{\mu}(0))<0$
for $(r,\e)\in U$ and $0\leq s\leq\sigma$ of 
Proposition~\ref{prop_twice_differentiable} holds for both $\cW^\prl_\mu$
and $\cW^\perp_\mu$.  
The hypothesis $\cY^\prl_{0,r,\e}{}'(0)=0$ certainly implies that 
$\cW^\prl_{0,r,\e}{}'(0)=0$, and also we have 
$\cW^\perp_{0,r,\e}{}'(0)=0$ since $\cW^\prl_{0,r,\e}=\cW^\perp_{0,r,\e}$.  
Combining this with
$\p_s\big(\cW_{s,r,\e}'(0)\big)\big|_{s=0}=0$, it 
follows that $\cW_{s,r,\e}'(0)\leq 0$ for $0\leq s\leq \sigma$ for both 
$\cW^\prl_\mu$ and $\cW^\perp_\mu$.  
Proposition~\ref{123018proposition} then implies that along any geodesic
$\g_\mu\subset \Sigma_\g$ with $0\leq s\leq \sigma$, no nontrivial normal  
Jacobi field which is either tangent to $\Sigma_\g$ or normal to 
$\Sigma_\g$ can vanish twice.  Proposition~\ref{propositiontotallygeodesic}
shows that no nontrivial normal Jacobi field can vanish twice, just as in
the proof for $g_{\pi/4,0}$.  Hence 
$g_{r,\e}$ has no interior conjugate points on a geodesic $\g_\mu$ for
which $0\leq s\leq \sigma$.   
\end{proof}

\begin{proof}[Proof of Proposition~\ref{noicpslarge}]
First we claim that there exists $S>0$ so that for any 
$(r,\e)\in \cI\times [0,\e_0]$, 
$g_{r,\e}$ has no interior conjugate points on any geodesic $\g_{s,r,\e}$
with $s\geq S$.  To see this, recall that we showed in the proof of  
Lemma~\ref{Knegative} that there is $\rho_0>0$ independent of 
$\nu\in \cI\times [0,\e_0]$ so that $K^\prl_\nu(\rho)<0$ and
$K^\perp_\nu(\rho)<0$ for $\r\geq \r_0$.  Since for any $(r,\e)$, 
$s=\min_{t\in \R}\r_{s,r,\e}(t)$, we know that if  
$s\geq \rho_0$, then $\rho_\mu(t)\geq \rho_0$ for all $t\in \R$.  
It follows that $K^\prl_\nu(\r_\mu(t))<0$ and $K_\mu(t)<0$ for $t\in \R$ so 
long as $s\geq \r_0$ and $(r,\e)\in \cI\times [0,\e_0]$.  Since the
equation $Y''=0$ has a 
nonvanishing solution on $\R$, the Sturm Comparison Theorem implies that
if $s\geq \r_0$ and $(r,\e)\in \cI\times [0,\e_0]$, then no nontrivial
solution of \eqref{jacobiparallel} or \eqref{jacobiperp} can vanish
twice.  This gives the claim with $S=\r_0$ upon recalling  
Proposition~\ref{propositiontotallygeodesic}.  

We will now show that given any $\sigma>0$, there is a neighborhood $V$
of $(\pi/4,0)$ such 
that $\cU^\prl_\mu(t)$ and $\cU^\perp_\mu(t)$ are positive for 
all $t\in \R$ for $(r,\e)\in V$ and $\sigma\leq s\leq S$, thus 
excluding nontrivial solutions vanishing twice by the Sturm Separation
Theorem.  It suffices to consider $t\geq 0$ since $\cU^\prl_\mu(t)$ and
$\cU^\perp_\mu(t)$ are even.  Choose $T$ as in Lemma~\ref{Knegative}.  We
showed in the proofs of Lemmas~\ref{122918gulliver} and  \ref{121618lemma}
that 
$\cU^\prl_{s,\pi/4,0}(t)$ and $\cU^\perp_{s,\pi/4,0}(t)$ are everywhere
positive for any $s\geq 0$, and that analysis also shows that these
solutions grow exponentially as $t\to \infty$ uniformly for 
$s\in [\sigma,S]$.  Increasing $T$ if necessary, continuity (from 
Lemma~\ref{Xcontinuous}) 
implies that there is a neighborhood $V$ of $(\pi/4,0)$ and $c>0$ so that  
$\cU^\prl_\mu(t)\geq c$, $\cU^\perp_\mu(t)\geq c$ for 
$0\leq t\leq T$, $(r,\e)\in V$ and $0\leq s\leq S$, and also  
$\cU^\prl_\mu{}'(T)>0$, $\cU^\perp_\mu{}'(T)>0$ for 
$(r,\e)\in V$ and $\sigma\leq s\leq S$.  The differential equations
satisfied by $\cU^\prl_\mu$ and $\cU^\perp_\mu$ then imply that the
solutions stay positive for $t>T$.     
\end{proof}

\bibliographystyle{alpha}
\bibliography{mybib.bib}

\end{document}